\newtheorem{theorem}{Theorem}[section]
\newtheorem{corollary}[theorem]{Corollary}
\newtheorem{proposition}[theorem]{Proposition}
\newtheorem{lemma}[theorem]{Lemma}
\theoremstyle{definition}
\newtheorem{definition}[theorem]{Definition}
\theoremstyle{remark}
\newtheorem{remark}[theorem]{Remark}
\newcommand{\N}{\mathbb{N}}
\newcommand{\Z}{\mathbb{Z}}
\newcommand{\R}{\mathbb{R}}
\newcommand{\C}{\mathbb{C}}
\newcommand{\rmd}{\mathrm{d}}
\newcommand{\rmt}{\mathrm{t}}
\newcommand{\rmH}{\mathrm{H}}
\newcommand{\rmI}{\mathrm{I}}
\newcommand{\rmN}{\mathrm{N}}
\newcommand{\rmV}{\mathrm{V}}
\newcommand{\bbD}{\mathbb{D}}
\newcommand{\bbM}{\mathbb{M}}
\newcommand{\bbN}{\mathbb{N}}
\newcommand{\bbS}{\mathbb{S}}
\newcommand{\bbV}{\mathbb{V}}
\DeclareRobustCommand{\bbSigma}{\mathbin{\text{\includegraphics[height=\heightof{$\mathbf{\Sigma}$}]{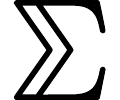}}}}
\newcommand{\bfP}{\mathbf{P}}
\newcommand{\bfepsilon}{\boldsymbol{\varepsilon}}
\newcommand{\bfLambda}{\mathbf{\Lambda}}
\newcommand{\calC}{\mathcal{C}}
\newcommand{\calD}{\mathcal{D}}
\newcommand{\calH}{\mathcal{H}}
\newcommand{\calL}{\mathcal{L}}
\newcommand{\calR}{\mathcal{R}}
\newcommand{\calU}{\mathcal{U}}
\newcommand{\calV}{\mathcal{V}}
\newcommand{\frakg}{\mathfrak{g}}
\newcommand{\frakh}{\mathfrak{h}}
\newcommand{\frakn}{\mathfrak{n}}
\renewcommand{\epsilon}{\varepsilon}
\renewcommand{\theta}{\vartheta}
\renewcommand{\phi}{\varphi}
\renewcommand{\Gamma}{\varGamma}
\renewcommand{\Sigma}{\varSigma}
\newcommand{\id}{\mathrm{id}}
\newcommand{\tr}{\mathrm{tr}}
\newcommand{\ptr}{\mathrm{ptr}}
\newcommand{\im}{\operatorname{im}}
\newcommand{\lev}{\smash{\stackrel{\leftarrow}{\mathrm{ev}}}}
\newcommand{\lcoev}{\smash{\stackrel{\longleftarrow}{\mathrm{coev}}}}
\newcommand{\rev}{\smash{\stackrel{\rightarrow}{\mathrm{ev}}}}
\newcommand{\rcoev}{\smash{\stackrel{\longrightarrow}{\mathrm{coev}}}}
\DeclareMathOperator{\Exists}{\exists}
\DeclareMathOperator{\Forall}{\forall}
\DeclareMathOperator{\disjun}{\sqcup}
\newcommand{\kk}{\Bbbk}
\newcommand{\bp}[1]{{\left(#1\right)}}
\newcommand{\cat}{\mathcal{C}}
\newcommand{\brk}[1]{{{\left\langle{#1}\right\rangle}}}
\renewcommand{\leq}{\leqslant} 
\renewcommand{\geq}{\geqslant}
\newcommand{\sltwo}{\mathfrak{sl}_2}
\newcommand{\GL}{\mathrm{GL}}
\newcommand{\Hom}{\mathrm{Hom}}
\newcommand{\bbHom}{\mathbb{H}\mathrm{om}}
\newcommand{\End}{\mathrm{End}}
\newcommand{\Vect}{\mathrm{Vect}}
\newcommand{\op}{\mathrm{op}}
\newcommand\restr[3][0]{{\raisebox{-#1pt}{$\left. \raisebox{#1pt}{$#2$} \vphantom{\big|} \right|_{#3}$}}}
\DeclareRobustCommand{\one}{\mathbin{\text{\includegraphics[height=\heightof{$\mathbf{1}$}]{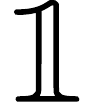}}}}
\newcommand{\PGr}{Z}
\newcommand{\adCob}{\check{\mathrm{C}}\mathrm{ob}}
\newcommand{\Mat}{\mathrm{Mat}}
\newcommand{\sqbinom}[2]{\left[ \begin{matrix} #1 \\ #2 \end{matrix} \right]}
\newcommand{\Proj}{\mathrm{Proj}}
\newcommand{\adrmS}{\check{\mathrm{S}}}
\DeclareRobustCommand{\cev}[1]{
  \mathpalette\do@cev{#1}
}
\newcommand{\do@cev}[2]{
  \fix@cev{#1}{+}
  \reflectbox{$\m@th#1\vec{\reflectbox{$\fix@cev{#1}{-}\m@th#1#2\fix@cev{#1}{+}$}}$}
  \fix@cev{#1}{-}
}
\newcommand{\fix@cev}[2]{
  \ifx#1\displaystyle
    \mkern#23mu
  \else
    \ifx#1\textstyle
      \mkern#23mu
    \else
      \ifx#1\scriptstyle
        \mkern#22mu
      \else
        \mkern#22mu
      \fi
    \fi
  \fi
}
\newcommand{\subalign}[1]{
  \vcenter{
    \Let@ \restore@math@cr \default@tag
    \baselineskip\fontdimen10 \scriptfont\tw@
    \advance\baselineskip\fontdimen12 \scriptfont\tw@
    \lineskip\thr@@\fontdimen8 \scriptfont\thr@@
    \lineskiplimit\lineskip
    \ialign{\hfil$\m@th\scriptstyle##$&$\m@th\scriptstyle{}##$\crcr
      #1\crcr
    }
  }
}
\def\clap#1{\hbox to 0pt{\hss#1\hss}}
\def\mathclap{\mathpalette\mathclapinternal}
\def\mathclapinternal#1#2{
\clap{$\mathsurround=0pt#1{#2}$}}
\begin{document}

\raggedbottom

\title[Non-Semisimple Quantum Invariants and TQFTs]{Non-Semisimple Quantum Invariants and TQFTs from Small and Unrolled Quantum Groups}

\author[M. De Renzi]{Marco De Renzi} \address{Department of Mathematics, Faculty of Science and Engineering, Waseda University, 3-4-1 \={O}kubo, Shinjuku-ku, Tokyo, 169-8555, Japan} \email{m.derenzi@kurenai.waseda.jp}

\author[N. Geer]{Nathan Geer}
\address{Mathematics \& Statistics\\
  Utah State University \\
  Logan, Utah 84322, USA} \email{nathan.geer@gmail.com}

\author[B. Patureau-Mirand]{Bertrand Patureau-Mirand}
\address{Univ. Bretagne - Sud, UMR 6205, LMBA, F-56000 Vannes, France}
\email{bertrand.patureau@univ-ubs.fr}

\begin{abstract}
 We show that unrolled quantum groups at odd roots of unity give rise to relative modular categories. These are the main building blocks for the construction of 1+1+1-TQFTs extending CGP invariants, which are non-semisimple quantum invariants of closed 3-manifolds decorated with ribbon graphs and cohomology classes. When we consider the zero cohomology class, these quantum invariants are shown to coincide with the renormalized Hennings invariants coming from the corresponding small quantum groups.
\end{abstract}

\maketitle
\setcounter{tocdepth}{3}

\date{\today}

The goal of this paper is two-fold: first of all, we provide a new family of concrete examples of relative modular categories. These are ribbon categories which can be used as fundamental bricks for the construction of non-semisimple quantum invariants of closed manifolds in dimension 3 \cite{CGP14} and Extended Topological Quantum Field Theories (ETQFTs) in dimension 1+1+1 \cite{D17}. They are modeled on categories of finite-dimensional weight representations of the unrolled quantum group $U_q^H \sltwo$ at even roots of unity $q$, which were used in \cite{BCGP16} to build TQFTs in dimension 2+1. They should be thought of as a non-semisimple analogue to standard modular categories, although differences with respect to their semisimple counterparts are many: first of all, a relative modular category $\calC$ comes equipped with a \textit{structure group} $G$ that provides a grading on its objects; secondly, it enjoys finiteness properties only up to the action of a \textit{periodicity group} $Z$ of transparent objects; more importantly, it is only generically semisimple, with non-semisimple part confined to a \textit{critical set} $X \subset G$ whose complement is dense in $G$. When $G = Z = \{ 0 \}$ and $X = \varnothing$ the definition reduces to the standard one of \cite{T94}, see Section 1.7 of \cite{D17}. In Theorem \ref{T:relative_modularity} we prove that categories $\calC^H$ of finite-dimensional weight representations of unrolled quantum groups $U_q^H \frakg$ associated with arbitrary simple complex Lie algebras $\frakg$ at odd roots of unity $q$ are relative modular. These categories were already known to induce topological invariants $\rmN_{\calC^H}$ of certain decorated closed 3-manifolds $(M,T,\omega)$, where $T \subset M$ is a $\calC^H$-colored ribbon graph, where $\omega \in H^1(M \smallsetminus T;G)$ is a cohomology class, and where the triple $(M,T,\omega)$ is subject to a crucial \textit{admissibility} condition. These so-called \textit{Costantino-Geer-Patureau} (\textit{CGP}) quantum invariants are quite rich, but their definition involves certain technical aspects, such as the notion of \textit{computable} surgery presentation introduced in \cite{CGP14}. The results of this paper imply these invariants can be extended to graded 1+1+1-TQFTs for all simple complex Lie algebras $\frakg$. In the case of $\sltwo$, these invariants contain the Akutsu-Deguchi-Ohtsuki invariants of colored links and the abelian Reidemeister torsion of closed 3-manifolds, and they were already known to extend to graded 1+1+1-TQFTs.

The second main result of this paper is a Hennings-type formula for these CGP quantum invariants. More precisely, every simple complex Lie algebra $\frakg$ also determines a corresponding small quantum group $\bar{U}_q \frakg$ for every odd root of unity $q$ \cite{L90,L93}. These finite-dimensional factorizable quotients have been studied a lot in literature \cite{L95,LN15,LO17}, and they induce renormalized Hennings TQFTs in dimension 2+1, see \cite{DGP18}. In particular, their categories of finite-dimensional representations $\bar{\calC}$ yield quantum invariants $\rmH'_{\bar{\calC}}$ of certain admissible closed 3-manifolds $(M,T)$, where $T \subset M$ is a $\bar{\calC}$-colored bichrome graph, which is a very mild generalization of standard ribbon graphs obtained by specifying special components which correspond to surgery presentations. Then, we prove in Theorem \ref{T:main_result} that, for a fixed simple complex Lie algebra $\frakg$ at an odd root of unity $q$, the CGP invariant $\rmN_{\calC^H}$ of decorated closed 3-manifolds with zero cohomology classes coincides with the corresponding renormalized Hennings invariant $\rmH'_{\bar{\calC}}$. This result builds a bridge between the two theories, and, in this setting, it gives us a way of computing CGP quantum invariants which bypasses computable surgery presentations.

\subsection*{Acknowledgments}
We would like to thank the referee for their extremely careful review of our paper. Their deep understanding of our results and their detailed comments helped us improve the paper. 
 NG was partially supported by NSF grants 
DMS-1308196 and DMS-1452093. 

\section{Overview of non-semisimple constructions}

In this section, we quickly review the two main constructions this paper deals with. References are provided by \cite{CGP14,BCGP16,D17} for the CGP theory, and by \cite{DGP18} for the renormalized Hennings one. All the manifolds we consider are always assumed to be oriented.

\subsection{Ribbon categories and m-traces}

First, let us fix our notation 
and conventions for categorical structures. Following \cite{EGNO15}, a \textit{ribbon category} is a braided rigid monoidal category $\calC$ equipped with a natural transformation $\theta : \id_\calC \Rightarrow \id_\calC$, called the \textit{twist}, satisfying
\[
 \theta_{V \otimes V'} = (\theta_V \otimes \theta_{V'}) \circ c_{V',V} \circ c_{V,V'}, \qquad
 (\theta_V)^* = \theta_{V^*}
\]
for all $V,V' \in \calC$, where $c_{V,V'} : V \otimes V' \to V' \otimes V$ denotes the braiding of $\calC$. For every $V \in \calC$ we denote with
\begin{align*}
 \lev_V &: V^* \otimes V \to \one, &
 \lcoev_V &: \one \to V \otimes V^*, \\
 \rev_V &: V \otimes V^* \to \one, &
 \rcoev_V &: \one \to V^* \otimes V
\end{align*}
its left and right duality morphisms, 
%
%
and for every $f \in \End_\calC(V)$ we denote with $\tr_\calC(f)$ its categorical trace. As shown in \cite{T94}, every ribbon category $\calC$ induces a ribbon functor $F_\calC : \calR_\calC \to \calC$ called the \textit{Reshetikhin-Turaev functor}, where $\calR_{\calC}$ denotes the category of $\calC$-colored ribbon graphs.

An ideal of $\calC$ is a full subcategory of $\calC$ whose class of objects is closed under retracts and absorbent under tensor products with arbitrary objects of $\calC$.
We denote with $\Proj(\calC)$ the ideal of projective objects of $\calC$. We define the \textit{partial trace} of an endomorphism $f \in \End_{\calC}(V \otimes V')$ to be the endomorphism $\ptr(f) \in \End_{\calC}(V)$ given by
\[
 \ptr(f) := (\id_V \otimes \rev_{V'}) \circ (f \otimes \id_{V'^*}) \circ (\id_V \otimes \lcoev_{V'}).
\]
Then, following \cite{GKP11,GKP18}, if $\calC$ is a ribbon linear category over a field $\Bbbk$, an \textit{m-trace $\rmt$ on $\Proj(\calC)$} is a family of linear maps
$\{ \rmt_V : \End_{\calC}(V) \rightarrow \Bbbk \mid V \in \Proj(\calC) \}$ satisfying:
\begin{enumerate}
 \item \textit{Cyclicity}: $\rmt_{V}(f' \circ f) = \rmt_{V'}(f \circ f')$ for all objects $V,V' \in \Proj(\calC)$ and for all morphisms $f \in \Hom_{\calC}(V,V')$ and $f' \in \Hom_{\calC}(V',V)$;
 \item \textit{Partial trace}: $\rmt_{V \otimes V'} (f) = \rmt_V(\ptr(f))$ for all objects $V \in \Proj(\calC)$ and $V' \in \calC$ and for every morphism $f \in \End_{\calC}(V \otimes V')$.
\end{enumerate}
For every $V \in \Proj(\calC)$ we denote with $\rmd(V) := \rmt_V(\id_V)$ its \textit{modified dimension}. We say an m-trace $\rmt$ on $\Proj(\calC)$ is \textit{non-degenerate} if, for every $V \in \Proj(\calC)$ and $V' \in \calC$, the bilinear pairing $\rmt_V( \cdot \circ \cdot ) : \Hom_\calC(V',V) \times \Hom_\calC(V,V') \to \Bbbk$ is non-degenerate.

Finally, let us fix some terminology which will be extensively used throughout the paper: every time we have a ribbon linear category $\calC$ over a field $\Bbbk$, we have an associated notion of \textit{skein equivalence} between formal linear combinations of $\calC$-colored ribbon graphs. Indeed, if $(\underline{\varepsilon},\underline{V})$ and $(\underline{\varepsilon'},\underline{V'})$ are objects of $\calR_{\calC}$, if $\alpha_1,\ldots,\alpha_m,\alpha'_1,\ldots,\alpha'_{m'}$ are scalar coefficients in $\Bbbk$, and if $T_1,\ldots,T_m,T'_1,\ldots,T'_{m'}$ are morphisms of $\calR_{\calC}$ from $(\underline{\varepsilon},\underline{V})$ and $(\underline{\varepsilon'},\underline{V'})$, then we say two formal linear combinations $\sum_{i=1}^m \alpha_i \cdot T_i$ and $\sum_{i'=1}^{m'} \alpha'_{i'} \cdot T'_{i'}$ are \textit{skein equivalent}, and we write 
\[
 \sum_{i=1}^m \alpha_i \cdot T_i \doteq \sum_{i'=1}^{m'} \alpha'_{i'} \cdot T'_{i'},
\]
if we have the equality $\sum_{i=1}^m \alpha_i \cdot F_{\calC}(T_i) = \sum_{i'=1}^{m'} \alpha'_{i'} \cdot F_{\calC}(T'_{i'})$ under the Reshetikhin-Turaev functor $F_{\calC} : \calR_{\calC} \rightarrow \calC$.

\subsection{3-Manifold invariants from non-degenerate relative pre-modular categories}\label{Subs:CGP_invariants}

Let us start by recalling the definition of relative pre-modular categories, which, as we mentioned earlier, are ribbon linear categories carrying additional structures. First of all, if $G$ is an abelian group, a \textit{compatible $G$-structure} on a rigid
monoidal category $\calC$ is an equivalence of linear categories
$\calC \cong \bigoplus_{g \in G} \calC_g$ for a family
$\{ \calC_g \mid g \in G \}$ of full subcategories of $\calC$
satisfying the following conditions: If $V \in \calC_g$, then
$V^* \in \calC_{- g}$; If $V \in \calC_g$ and $V' \in \calC_{g'}$,
then $V \otimes V' \in \calC_{g + g'}$; If $V \in \calC_g$ and
$V' \in \calC_{g'}$ with $g \neq g'$, then $\Hom_{\calC}(V,V') =
0$. Remark that, if $\calC$ is ribbon, then $\calC_0$ is also ribbon.
Next, if $Z$ is an abelian group, a \textit{free realization of $\PGr$ in a ribbon category $\calC$} is a monoidal functor $\sigma : Z \rightarrow \calC$, where $Z$ also denotes the discrete category over $Z$ with tensor product given by the group operation $+$, satisfying $\vartheta_{\sigma(k)} = \id_{\sigma(k)}$ for every $k \in Z$, and inducing a free action on isomorphism classes of simple objects of $\calC$ by tensor product with $\sigma(k)$. 
Next, we say a subset $X$ of $G$ is \textit{symmetric} if $X = -X$, and we say it is \textit{small } if $G \not\subset \bigcup_{i=1}^m (g_i + X)$ for all $m \in \N$ and all $g_1, \ldots, g_m \in G$.

\begin{definition}[\cite{D17}]
 If $G$ and $Z$ are abelian groups, and if $X \subset G$ is a small symmetric subset, then a \textit{pre-modular $G$-category relative to $(Z,X)$} is a ribbon linear category $\calC$ over a field $\Bbbk$ together with a compatible $G$-structure on $\calC$, a free realization $\sigma : \PGr \rightarrow \calC_0$, and a non-zero m-trace $\rmt$ on $\Proj(\calC)$. These data are subject to the following conditions: 
 \begin{enumerate}
  \item \textit{Generic semisimplicity}. For every $g \in G \smallsetminus X$ the homogeneous subcategory $\calC_g$ is semisimple and dominated by $\Theta(\calC_g) \otimes \sigma(Z)$ for some finite set $\Theta(\calC_g) = \{ V_i \in \calC_g \mid i \in \rmI_g \}$ of simple projective objects with epic evaluation;
  \item \textit{Compatibility}. There exists a bilinear map $\psi : G \times Z \rightarrow \Bbbk^*$ such that $c_{\sigma(k),V} \circ c_{V,\sigma(k)} = \psi(g,k) \cdot \id_{V \otimes \sigma(k)}$ for every $g \in G$, for every $V \in \calC_g$, and for every $k \in Z$.
 \end{enumerate}
\end{definition}

The group $G$ is called the \textit{structure group}, the group $Z$ is called the \textit{periodicity group}, and the set $X$ is called the \textit{critical set} of $\calC$.
Condition (1) implies that, for any $g \in G \smallsetminus X$, every object of $\cat_g$ is a direct sum of simple objects in the set $\{V_i \otimes \sigma(k) \in \calC\mid i \in \rmI_g, k \in Z \}$.
We point out that the bilinear map $\psi$ is uniquely determined by the braiding $c$ and by the free realization $\sigma$, and that, although the set of representatives $\Theta(\calC_g)$ is not unique in general, its choice does not affect
the following construction.   
In particular, both $\psi$ and $\Theta(\calC_g)$ should not be considered relevant parts of the structure of $\calC$.
Relative pre-modular $G$-categories are a slight generalization of the notion of \textit{relative $G$-modular category} introduced for the first time in \cite{CGP14}, see Section 1.5 of \cite{D17} for a full discussion of the relation between the two definitions. The change in terminology is motivated by the semisimple theory, where quantum invariants are defined for any non-degenerate pre-modular category, and modularity is an additional condition ensuring the invariant extends to a TQFT.
If $\calC$ is a pre-modular $G$-category relative to $(Z,X)$ then the associated \textit{Kirby color of index $g \in G \smallsetminus X$} is the formal linear combination of objects
\[
 \Omega_g := \sum_{i \in \rmI_g} \rmd(V_i) \cdot V_i.
\]
The name comes from Lemmas 5.9 and 5.10 of \cite{CGP14}. In particular, there exist constants $\Delta_{-\Omega},\Delta_{+\Omega} \in \Bbbk$, called \textit{stabilization coefficients}, which realize the skein equivalences of Figure \ref{F:stabilization_coefficients_Omega}, and which are independent of both $V \in \calC_g$ and $g \in G \smallsetminus X$. We say the relative pre-modular category $\calC$ is \textit{non-\-de\-gen\-er\-ate} if $\Delta_{-\Omega} \Delta_{+\Omega} \neq 0$.

\begin{figure}[tb]\label{F:stabilization_coefficients_Omega}
 \centering
 \includegraphics{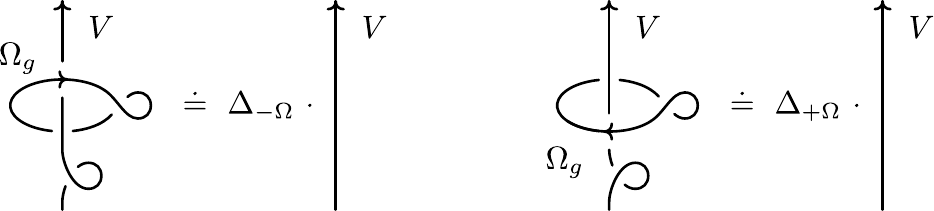}
 \caption{Skein equivalences defining $\Delta_{-\Omega}$ and $\Delta_{+\Omega}$.}
\end{figure}

In \cite{CGP14,D17} it is shown that every non-degenerate relative pre-modular category $\calC$ gives rise to a topological invariant $\rmN_{\calC}$ of admissible triples $(M,T,\omega)$, where $M$ is a closed 3-manifold, $T \subset M$ is a $\calC$-colored ribbon graph, and $\omega \in H^1(M \smallsetminus T;G)$ is a compatible cohomology class, meaning that every edge $e \subset T$ is colored with an object of $\calC_{\langle \omega,m_e \rangle}$ for the homology class $m_e$ of a positive meridian of $e$. The CGP invariant $\rmN_{\calC}$ is defined only for \textit{admissible} triples $(M,T,\omega)$, which are triples such that every component of $M$ contains either a \textit{projective} edge of $T$, that is an edge of $T$ whose color is a projective object of $\calC$, or a \textit{generic} curve for $\omega$, that is an embedded closed oriented curve whose homology class is sent to $G \smallsetminus X$ by $\omega$. Its definition uses \textit{computable} surgery presentations in $S^3$, which are surgery presentations $L = L_1 \cup \ldots \cup L_{\ell}$ of $M$ satisfying $\langle \omega,m_j \rangle \in G \smallsetminus X$ for all integers $1 \leqslant j \leqslant \ell$, where $m_j$ denotes the homology class of a meridian of the component $L_j$. We interpret a surgery presentation of $M$ which is computable with respect to some decoration $(T,\omega)$ as a $\calC$-colored ribbon graph by arbitrarily choosing orientations, and by labeling every component with the corresponding Kirby color, with index prescribed by the evaluation of $\omega$ against the homology class of a positive meridian. This is a technical complication, because arbitrary surgery presentations are not computable in general. Computable surgery presentations do exist for admissible decorated closed 3-manifolds, but only up to replacing admissible decorations via certain operations called \textit{projective} and \textit{generic stabilizations}, see Section 3.1 of \cite{D17}. The idea is to build $\rmN_{\calC}$ out of a \textit{renormalized invariant} $F'_{\calC}$ of admissible closed $\calC$-colored ribbon graphs which combines the Reshetikhin-Turaev functor $F_{\calC}$ on $\calR_{\calC}$ with the m-trace $\rmt$ on $\Proj(\calC)$. We say a closed $\calC$-colored ribbon graph is \textit{admissible} if one of his edges is projective. 
Example 3 of Section 1.5 in \cite{GP18} (see also \cite{GPT09, GKP11, GPV13}) implies the formula
\[
 F'_{\calC}(T) := \rmt_V(F_{\calC}(T_V))
\] 
defines a topological invariant of the admissible closed $\calC$-colored ribbon graph $T$, where $V \in \calC$ is projective, and where $T_V$ is a \textit{cutting presentation} of $T$, i.e. an endomorphism of $(+,V)$ in $\calR_{\calC}$ whose trace is $T$. Then, for a fixed choice of a square root $\calD_{\Omega} \in \Bbbk$ of $\Delta_{- \Omega} \Delta_{+ \Omega}$, the formula
\[
 \rmN_{\calC}(M,T,\omega) := \calD_{\Omega}^{-1-\ell} \delta_{\Omega}^{-\sigma(L)} F'_{\calC} (L \cup \tilde{T})
\]
defines a topological invariant of the admissible triple $(M,T,\omega)$ thanks to Proposition 3.1 of \cite{D17}, where $L \subset S^3$ is an $\ell$-component surgery presentation of $M$ of signature $\sigma(L)$ which is computable with respect to an admissible decoration $(\tilde{T},\tilde{\omega})$ obtained from $(T,\omega)$ by performing projective or generic stabilization, and where $\delta_{\Omega} = \calD_{\Omega} / \Delta_{-\Omega}$.

\subsection{2+1-TQFTs from relative modular categories}\label{Subs:CGP_TQFTs}

A stronger non-de\-gen\-er\-a\-cy condition is required in order to extend CGP invariants to graded TQFTs.

\begin{definition}[\cite{D17}]\label{D:relative_modular}
 A pre-modular $G$-cat\-e\-go\-ry $\calC$ relative to $(Z,X)$ is \textit{relative modular} if there exists a \textit{relative modularity parameter} $\zeta_{\Omega} \in \Bbbk^*$ realizing the skein equivalence of Figure \ref{F:relative_modularity} for all $g,h \in G \smallsetminus X$ and for all $i,j \in \rmI_g$.
\end{definition}

\begin{figure}[hbtp]\label{F:relative_modularity}
 \centering
 \includegraphics{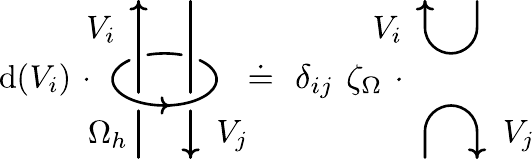}
 \caption{Relative modularity condition.}
\end{figure}

This condition automatically implies non-degeneracy, because the relative modularity parameter satisfies $\zeta_{\Omega} = \Delta_{-\Omega} \Delta_{+\Omega}$, see \cite{D17}. When $\calC$ is relative modular then, as explained in Section 6.2 of \cite{D17}, $\rmN_{\calC}$ extends to a $Z$-graded 2+1-TQFT $\bbV_{\calC}^\PGr : \adCob_{\calC}^G \rightarrow \Vect_{\Bbbk}^Z$ via a $Z$-graded refinement of the universal construction of \cite{BHMV95}, where $\adCob_{\calC}^G$ is the category of \textit{admissible} cobordisms of dimension 2+1, and where $\Vect_{\Bbbk}^Z$ is the category of $Z$-graded vector spaces. More precisely, an object of $\adCob_{\calC}^G$ is a 5-tuple $\bbSigma = (\varSigma,P,\vartheta,B,\calL)$, where $\varSigma$ is a closed surface, where $P \subset \varSigma$ is a $\calC$-colored ribbon set, where $\vartheta \in H^1(\varSigma \smallsetminus P;G)$ is a compatible cohomology class, where $B \subset \varSigma \smallsetminus P$ is a finite set composed of exactly one base point in every connected component of $\varSigma$, and where $\calL \subset H_1(\varSigma;\R)$ is a Lagrangian subspace. A morphism of $\adCob_{\calC}^G$ from $(\varSigma,P,\vartheta,B,\calL)$ to $(\varSigma',P',\vartheta',B',\calL')$ is an equivalence class of admissible 4-tuples $\bbM = (M,T,\omega,n)$, where $M$ is a 3-dimensional cobordism from $\varSigma$ to $\varSigma'$, where $T \subset M$ is a $\calC$-colored ribbon graph from $P$ to $P'$, where $\omega \in H^1(M \smallsetminus T,B \cup B';G)$ is a compatible relative cohomology class restricting to $\vartheta$ and $\vartheta'$ on the incoming and outgoing boundary of $M$ respectively, and where $n \in \Z$ is a signature defect. A 4-tuple $(M,T,\omega,n)$ is admissible if every component of $M$ which is disjoint from the incoming boundary $\partial_- M$ contains either a projective edge of $T$, or a generic curve for $\omega$, and two 4-tuples $(M,T,\omega,n)$ and $(M',T',\omega',n')$ are equivalent if $n = n'$, and if there exists a positive diffeomorphism $f : M \to M'$ which preserves boundary identifications and satisfies $f(T) = T'$ and $f^*(\omega') = \omega$. Then, $\rmN_{\calC}$ can be extended to an invariant of closed morphisms of $\adCob_{\calC}^G$ by setting
\[
 \rmN_{\calC}(M,T,\omega,n) := \delta_{\Omega}^n \rmN_{\calC}(M,T,\omega) .
\]
Remark that the category $\adCob_{\calC}^G$ thus obtained is not rigid, as objects $(\varSigma,P,\vartheta,\calL)$ such that $P$ does not contain any projective point and such that $\vartheta$ does not admit any generic curve are not dualizable.

State spaces associated with objects of $\adCob_{\calC}^G$ by the $Z$-graded TQFT $\bbV_{\calC}^\PGr$ can be described in skein theoretical terms in all degrees. We have two relevant notions of skein equivalence between morphisms of $\adCob_{\calC}^G$ from $\bbSigma$ to $\bbSigma'$, one which is local, the other which is not. Indeed, we say a formal linear combination of morphisms of $\adCob_{\calC}^G$ from $\bbSigma$ to $\bbSigma'$ is a \textit{local skein relation} if it can be written in the form
\[
 \sum_{i=1}^m \alpha_i \cdot \bbM_P \circ \left( (D^3,T_i,\omega_i,0) \disjun \id_{\bbSigma} \right)
\]
for some coefficients $\alpha_1,\ldots,\alpha_m \in \Bbbk$, for some $\calC$-colored ribbon set $P \subset S^2$ with at least one point labeled by a projective object, for some morphism $\bbM_P$ of $\adCob_{\calC}^G$ from $(S^2,P,\vartheta,\{ 0 \}) \disjun \bbSigma$ to $\bbSigma'$, and for some $\calC$-colored ribbon graphs $T_1,\ldots,T_m \subset D^3$ from $\varnothing$ to $P$ satisfying
\[
 \sum_{i=1}^m \alpha_i \cdot f_{D^3}(T_i) \doteq 0
\]
with respect to some embedding $f_{D^3} : D^3 \hookrightarrow \R^2 \times I$ mapping $P$ into $\R^2 \times \{ 1 \}$. Here the cohomology classes $\theta$ and $\omega_1,\ldots,\omega_m$ are uniquely determined by $P$ and by $T_1,\ldots,T_m$ respectively. On the other hand, we say a formal linear combination of morphisms of $\adCob_{\calC}^G$ from $\bbSigma$ to $\bbSigma'$ is a \textit{non-local skein relation} if it can be written in the form
\[
 (M,T \cup K,j^*(\omega),n) - \psi(\langle \omega,\ell_K \rangle,k) \cdot (M,T,\omega,n)
\]
for some $k \in Z$ and for some framed knot $K \subset M \smallsetminus T$ of color $\sigma(k)$, where $j^*$ is induced by the inclusion of $M \smallsetminus (T \cup K)$ into $M \smallsetminus T$, and where $\ell_K$ denotes the homology class of $K$ in $H_1(M \smallsetminus T;\Z)$. Then, if $\bbSigma = (\varSigma,P,\vartheta,B,\calL)$ is an object of $\adCob_{\calC}^G$, and if $M$ is a 3-dimensional cobordism from $\varnothing$ to $\varSigma$, the \textit{admissible skein module} $\adrmS(M;\bbSigma)$ is the quotient, induced by both local and non-local skein relations, of the free vector space $\calV(M;\bbSigma)$ generated by all pairs $(T,\omega)$ such that $(M,T,\omega,0)$ is a morphism of $\adCob_{\calC}^G$ from $\varnothing$ to $\bbSigma$. The class of a generator $(T,\omega)$ of $\calV(M;\bbSigma)$ in $\adrmS(M;\bbSigma)$ is denoted $[T,\omega]$. The proof of Proposition 4.5 in \cite{BCGP16} can be adapted to show that if $\Sigma$ and $M$ are both connected, then $\adrmS(M;\bbSigma)$ is a finite-dimensional vector space. On the other hand, if $M'$ is a 3-dimensional cobordism from $\varSigma$ to $\varnothing$, 
%
%
we denote with $\calV'(M';\bbSigma)$ the free vector space generated by all pairs $(T',\omega')$ such that $(M',T',\omega',0)$ is a morphism of $\adCob_{\calC}^G$ from $\bbSigma$ to $\varnothing$. Then, for every $k \in Z$, the degree $k$ state space $\bbV_{\calC}^k(\bbSigma)$ of a connected object $\bbSigma$ of $\adCob_{\calC}^G$ satisfies 
\[
 \bbV_{\calC}^k(\bbSigma) \cong \adrmS(M;\bbSigma \disjun \bbS^2_{-k}) / \calV'(M';\bbSigma \disjun \bbS^2_{-k})^{\perp}
\]
with respect to the pairing $\langle \cdot , \cdot \rangle_{\bbSigma \disjun \bbS^2_{-k}} : \calV'(M';\bbSigma \disjun \bbS^2_{-k}) \otimes \adrmS(M;\bbSigma \disjun \bbS^2_{-k}) \rightarrow \Bbbk$ defined by
\[
 \langle (T',\omega') , [T,\omega] \rangle_{\bbSigma \disjun \bbS^2_{-k}} := \rmN_{\calC} \left( (M',T',\omega',0) \circ (M,T,\omega,0) \right),
\]
where $M$ is a connected 3-dimensional cobordism from $\varnothing$ to $\varSigma \sqcup S^2$, where $M'$ is a connected 3-dimensional cobordism from $\varSigma \sqcup S^2$ to $\varnothing$, and where the object 
\[
 \bbS^2_{-k} = (S^2,P_{((+,V_0),(+,\sigma(-k)),(-,V_0))},\vartheta_{((+,V_0),(+,\sigma(-k)),(-,V_0))},B,\{ 0 \})
\]
of $\adCob_{\calC}^G$ is determined by the $\calC$-colored ribbon set $P_{((+,V_0),(+,\sigma(-k)),(-,V_0))} \subset S^2$ composed of three points in standard positions with orientations and colors specified by their subscript for some $g_0 \in G \smallsetminus X$ and for some $V_0 \in \Theta(\calC_{g_0})$. Remark that an explicit characterization of these quotients can sometimes be achieved. For instance, if $P \subset S^2$ is a $\calC$-colored ribbon set composed of a single positive point of color $F_\calC(P) \in \Proj(\calC_0)$, then, thanks to Remark 7.2 of \cite{D17}, the $\PGr$-graded state space $\bbV_{\calC}^\PGr(\bbS^2_P)$ of the object $\bbS^2_P = (S^2,P,0,\{ 0 \})$ of $\adCob_{\calC}^G$ satisfies
\[
 \bbV_{\calC}^\PGr(\bbS^2_P) \cong \bbHom_\calC(\one,F_\calC(P)),
\]
where for all $V,V' \in \calC$ we denote with $\bbHom_\calC(V,V')$ the $\PGr$-graded vector space whose space of degree $k$ vectors is given by $\Hom_\calC(V,V' \otimes \sigma(-k))$ for every $k \in \PGr$. See also Proposition 7.16 of \cite{D17} for a description, in terms of homogeneous colorings of trivalent graphs, of the $\PGr$-graded state space of \textit{generic surfaces} in $\adCob_{\calC}^G$.

\subsection{3-Manifold invariants from finite-dimensional non-degenerate unimodular ribbon Hopf algebras}\label{Subs:Hennings_invariants}

Next, let us move on to the renormalized Hennings theory. We start by fixing our notation for Hopf algebras, and by recalling some crucial definitions and results. If $\Bbbk$ is a field, a finite-dimensional ribbon Hopf algebra $H$ is a finite-dimensional vector space over $\Bbbk$ endowed with a multiplication $m : H \otimes H \to H$, a unit $\eta : \Bbbk \to H$, a coproduct $\Delta : H \to H \otimes H$, a counit $\varepsilon : H \to \Bbbk$, an antipode $S : H \to H$, an R-matrix $R = \sum_{i=1}^r a_i \otimes b_i \in H \otimes H$, and a ribbon element $v$ in the center of $H$, see \cite{R11} for a list of the axioms these structure maps and elements are subject to. We use the notation $m(x \otimes y) = xy$ for every $x \otimes y \in H \otimes H$ and $\eta(1) = 1$, and we denote with $u = \sum_{i=1}^r S(b_i)a_i \in H$ the Drinfeld element, and with $g = uv^{-1} \in H$ the pivotal element associated with the ribbon structure of $H$. As a consequence of finite-dimensionality, $H$ admits a right integral $\lambda \in H^*$ and a left cointegral $\Lambda \in H$ which are unique up to scalar, and we can fix a pair satisfying $\lambda(\Lambda) = 1$. The Hopf algebra $H$ is \textit{non-degenerate} if the stabilization coefficients $\Delta_{-\lambda} := \lambda(v)$ and $\Delta_{+\lambda} := \lambda(v^{-1})$ satisfy $\Delta_{-\lambda} \Delta_{+\lambda} \neq 0$, and it is \textit{unimodular} if the left cointegral $\Lambda$ is two-sided, meaning that it is also a right cointegral. The category $\calC = H$-mod of finite-dimensional left $H$-modules is a ribbon linear category, with evaluation and coevaluation morphisms given by
\begin{align*}
 &\begin{array}{rccc}
   \lev_V : & V^* \otimes V & \rightarrow & \Bbbk \\
   & f \otimes v & \mapsto & f(v) \vphantom{\displaystyle \sum_{i=1}^n}
  \end{array} &
 &\begin{array}{rccc}
   \lcoev_V : & \Bbbk & \rightarrow & V \otimes V^* \\
   & 1 & \mapsto & \displaystyle \sum_{i=1}^n v_i \otimes f_i 
  \end{array} \\
  &\begin{array}{rccc}
   \rev_V : & V \otimes V^* & \rightarrow & \Bbbk \\
   & v \otimes f & \mapsto & f(\rho_V(g)(v)) \vphantom{\displaystyle \sum_{i=1}^n}
  \end{array} &
 &\begin{array}{rccc}
   \rcoev_V : & \Bbbk & \rightarrow & V^* \otimes V \\
   & 1 & \mapsto & \displaystyle \sum_{i=1}^n f_i \otimes \rho_V(g^{-1})(v_i)
  \end{array}
\end{align*}
for every left $H$-module $V$ with basis $\{ v_1,\ldots,v_n \}$ and dual basis $\{ f_1,\ldots,f_n \}$, and with braiding morphisms given by
\[
 \begin{array}{rccc}
  c_{V,V'} : & V \otimes V' & \rightarrow & V' \otimes V \\
  & v \otimes v' & \mapsto & \displaystyle \sum_{i=1}^r \rho_{V'}(b_i)(v') \otimes \rho_V(a_i)
 \end{array}
\]
for all left $H$-modules $V$ and $V'$. Thanks to Theorem 1 of \cite{BBG18}, $\calC$ admits an m-trace $\rmt$ on the ideal of projective $H$-modules $\Proj(\calC)$, which is unique up to scalar and uniquely determined by the condition $\rmt_H(f) = \lambda(gf(1))$ for all $f \in \End_\calC(H)$, where $H \in \Proj(\calC)$ denotes the regular representation of $H$. Furthermore, $\rmt$ is non-degenerate.

In \cite{DGP18} it is shown that every finite-dimensional non-degenerate ribbon Hopf algebra $H$ gives rise to a topological invariant $\rmH'_{\calC}$ of admissible pairs $(M,T)$, where $M$ is a closed 3-manifold, and $T \subset M$ is a $\calC$-colored bichrome graph. The latter are $\calC$-colored ribbon graphs carrying a set of specified edges, and their name comes from the fact that we think about special edges as being red, while the rest of the graph is blue. Red edges can only be colored with the regular representation $H$, and they can only intersect coupons in a prescribed way: for every coupon of a bichrome graph there exists an integer $k \geqslant 0$ such that the first $k$ input edges are incoming and red, the first $k$ output edges are outgoing and red, while all the other ones are blue. Such a coupon is colored with a morphism in the $k$-th stabilized subcategory $[k]\calC$ of $\calC$, which is the category whose objects have the form $[k]V := H^{\otimes k} \otimes V \in \calC$ for some $V \in \calC$, and whose morphisms have the form $\sum_{i=1}^m L_{\underline{x_i}} \otimes f_i \in \Hom_{\calC}([k]V,[k]V')$ for some left translation $L_{\underline{x_i}} \in \End_\Bbbk(H^{\otimes k})$ by $\underline{x_i} \in H^{\otimes k}$ and for some linear map $f_i \in \Hom_{\Bbbk}(V,V')$. The ribbon category $\calR_{\lambda}$ of $\calC$-colored bichrome graphs provides a graphical calculus which is formalized by the \textit{Hennings-Reshetikhin-Turaev functor} $F_{\lambda} : \calR_{\lambda} \to \calC$ introduced in Proposition 2.5 of \cite{DGP18}. By definition, $F_{\lambda}$ coincides with the Reshetikhin-Turaev functor in the absence of red edges, it coincides with the Hennings invariant in the absence of blue edges, and it coherently combines the two behaviors for general $\calC$-colored bichrome graphs. Remark that $F_{\lambda}$ yields a notion of \textit{skein equivalence} between formal linear combinations of $\calC$-colored bichrome graphs in the same way $F_{\calC}$ does for $\calC$-colored ribbon graphs. This way, $\calR_{\calC}$ is naturally identified with the subcategory of $\calR_{\lambda}$ whose morphisms are entirely blue. The renormalized Hennings invariant $\rmH'_{\calC}$ is then defined only for \textit{admissible} pairs $(M,T)$, which are pairs such that every component of $M$ contains a \textit{projective} blue edge of $T$. Its definition uses surgery presentations in $S^3$, which we interpret as $\calC$-colored bichrome graphs by arbitrarily choosing orientations, by labeling every component with the regular representation $H$, and by taking them to be red. The idea is to build $\rmH'_{\calC}$ out of a \textit{renormalized invariant} $F'_{\lambda}$ of admissible closed $\calC$-colored bichrome graphs which combines the Hennings-Reshetikhin-Turaev functor $F_{\lambda}$ on $\calR_{\lambda}$ with the m-trace $\rmt$ on $\Proj(\calC)$. We say a closed $\calC$-colored bichrome graph is \textit{admissible} if one of his blue edges is projective. The formula
\[
 F'_{\lambda}(T) := \rmt_V(F_{\lambda}(T_V))
\] 
defines a topological invariant of the admissible closed $\calC$-colored bichrome graph $T$ thanks to Theorem 2.7 of \cite{DGP18}, where $V$ is a projective object of $\calC$, and where $T_V$ is a \textit{cutting presentation} of $T$, meaning an endomorphism of $(+,V)$ in $\calR_{\lambda}$ whose trace is $T$. Then, for a fixed choice of a square root $\calD_{\lambda} \in \Bbbk$ of $\Delta_{- \lambda} \Delta_{+ \lambda}$, the formula
\[
 \rmH'_{\calC}(M,T) := \calD_{\lambda}^{-1-\ell} \delta_{\lambda}^{-\sigma(L)} F'_{\lambda} (L \cup T)
\]
defines a topological invariant of the admissible pair $(M,T)$ thanks to Theorem 2.9 of \cite{DGP18}, where $L \subset S^3$ is an $\ell$-component surgery presentation of $M$ of signature $\sigma(L)$, and where $\delta_{\lambda} = \calD_{\lambda} / \Delta_{-\lambda}$.

\subsection{2+1-TQFTs from finite-dimensional factorizable ribbon Hopf algebras}\label{Subs:Hennings_TQFTs}

A finite-dimensional ribbon Hopf algebra $H$ with R-matrix $R = \sum_{i=1}^r a_i \otimes b_i$ is \textit{factorizable} if the \textit{Drinfeld map} $\psi_H : H^* \to H$, which is defined by
\[
 \psi_H(f) := \sum_{i,j=1}^r f(b_ja_i) \cdot a_jb_i
\] 
for every $f \in H^*$, is an isomorphism. This condition implies both non-degeneracy and unimodularity, see \cite{H96} and \cite{R11}. When $H$ is factorizable then, as explained in Section 3 of \cite{DGP18}, $\rmH'_{\calC}$ extends to a 2+1-TQFT $\rmV_{\calC} : \adCob_{\calC} \rightarrow \Vect_{\Bbbk}$ via the universal construction of \cite{BHMV95}, where $\adCob_{\calC}$ is the category of admissible cobordisms of dimension 2+1. More precisely, an object of $\adCob_{\calC}$ is a triple $\bbSigma = (\varSigma,P,\calL)$, where $\varSigma$ is a closed surface, where $P \subset \varSigma$ is a blue $\calC$-colored ribbon set, and where $\calL \subset H_1(\varSigma;\R)$ is a Lagrangian subspace. A morphism of $\adCob_{\calC}$ from $(\varSigma,P,\calL)$ to $(\varSigma',P',\calL')$ is an equivalence class of admissible triples $\bbM = (M,T,n)$, where $M$ is a 3-dimensional cobordism from $\varSigma$ to $\varSigma'$, where $T \subset M$ is a $\calC$-colored bichrome graph from $P$ to $P'$, and where $n \in \Z$ is a signature defect. A triple $(M,T,n)$ is \textit{admissible} if every component of $M$ which is disjoint from the incoming boundary $\partial_- M$ contains a projective blue edge of $T$, and two triples $(M,T,n)$ and $(M',T',n')$ are equivalent if $n = n'$, and if there exists a positive diffeomorphism $f : M \to M'$ which preserves boundary identifications and satisfies $f(T) = T'$. Then, $\rmH'_{\calC}$ can be extended to an invariant of closed morphisms of $\adCob_{\calC}$ by setting
\[
 \rmH'_{\calC}(M,T,n) := \delta_{\lambda}^n \rmH'_{\calC}(M,T) .
\]
Remark that the category $\adCob_{\calC}$ thus obtained is not rigid, as objects $(\varSigma,P,\calL)$ such that $P$ does not contain any projective blue point are not dualizable.

State spaces associated with objects of $\adCob_{\calC}$ by the TQFT $\rmV_{\calC}$ can be presented as quotients of admissible skein modules, just like we did in the CGP case, and of course this time only local skein relations are needed. However, they can also be efficiently described in terms of the \textit{dual coadjoint $H$-module $X$}, which is the vector space $H$ equipped with the action $\rho_X(h)(x) := h_{(2)}xS^{-1}(h_{(1)})$ for all $h \in H$ and $x \in X$. Indeed, if $V$ is a left $H$-module with action $\rho_V : H \rightarrow \End_{\Bbbk}(V)$, we can consider its subspace of $H$-invariant vectors, which is defined as 
\[
 V^H := \{ v \in V \mid \rho_V(h)(v) = \varepsilon(h) \cdot v \ \Forall h \in H \}.
\]
Remark that we have an obvious isomorphism between $\Hom_{\calC}(\one,V)$ and $V^H$ sending $f$ to $f(1)$. Then, if $\varSigma_g$ is a closed surface of genus $g \in \N$, and if $P_V \subset \varSigma_g$ is a single positive framed blue point of color $V \in \calC$, it follows directly from Corollary 3.21 of \cite{DGP18} that the state space of the object $\bbSigma_{g,V} = (\varSigma_g,P_V,\calL)$ of $\adCob_{\calC}$ determined by any arbitrary Lagrangian $\calL \subset H_1(\varSigma_g;\R)$ satisfies
\[
 \rmV_{\calC}(\bbSigma_{g,V}) \cong \bp{(V^* \otimes X^{\otimes g})^H}^*.
\]

\subsection{Main results}

As we mentioned earlier, this paper contains two main results related
to the non-semisimple constructions we just recalled. The first one
concerns the existence of a family of graded TQFTs, as well as graded
ETQFTs, for the CGP theory. The setting is provided by unrolled
quantum groups at odd roots of unity. More precisely, in Subsection
\ref{Subs:unrolled_quantum_groups} we recall the definition, for every
simple complex Lie algebra $\frakg$ of rank $n$ and dimension
$2N + n$, of a particular quantum deformation, denoted
$U^H_q(\frakg)$, of the enveloping algebra $U(\frakg)$ for
$q = \smash{e^{\frac{2 \pi i}{r}}}$, where $r \geq 3$ is an odd
integer which is required not to be a multiple of 3 when
$\frakg = \frakg_2$. These unrolled quantum groups are quite different
from the ones which usually underlie quantum constructions in low
dimensional topology. For instance, they are infinite dimensional:
indeed, they are generated by
$\{ E_i,F_i,H_i,K_i,K_i^{-1} \mid 1 \leq i \leq n \}$, but while
generators $E_i$ and $F_i$, as well as their induced root vectors, are
set to be nilpotent, generators $K_i$ and $K_i^{-1}$ are not required
to be 
quasi-unipotent. 
This produces a representation theory in which
weights are allowed to take arbitrary complex values, instead of
integral ones. As a consequence, we need to be careful when it comes
to defining braidings of representations. Indeed, we need the presence
of generators $H_i$, which should be thought of a logarithms of
generators $K_i$. This exponential relation is not set at the level of
the quantum group, but we restrict to representations where it is
satisfied. More precisely, we focus on the full subcategory $\calC^H$
of finite-dimensional representations of $U^H_q \frakg$ where the
action of generators $H_i$ is diagonalizable, and where the action of
generators $K_i$ is obtained by exponentiating. This category is
non-semisimple, and it was studied in detail in \cite{GP13}: a full
subcategory $\calD^\vartheta$ of $\calC^H$ was proven to be ribbon,
and the equality $\calD^\vartheta = \calC^H$ was conjectured. In
\cite{CGP14}, it was shown that $\calD^\vartheta$ is non-degenerate
relative pre-modular, and thus yields a quantum invariant
$\rmN_{\calC^H}$ of admissible decorated 3-manifolds. In \cite{GP18},
the conjecture was proven: $\calC^H$ is a relative pre-modular
category. Its structure group $G$ is given by $\frakh^* / \Lambda_R$,
where $\frakh$ is a Cartan subalgebra of $\frakg$ with root lattice
$\Lambda_R$. Its periodicity group $Z$ is given by
$\Lambda_R \cap (r \cdot \Lambda_W)$, where $\Lambda_W$ denotes the
weight lattice. Its critical set $X$ is given by
$\{ [\xi] \in \frakh^* / \Lambda_R \mid \Exists \alpha \in \Phi_+ : 2
\langle \alpha,\xi \rangle \in \Z \}$, where $\Phi_+$ is a set of
positive roots of $\frakg$. The following is our first main result,
which implies, as an immediate consequence, the existence of a
$Z$-graded ETQFT in dimension 1+1+1 extending the quantum invariant
$\rmN_{\calC^H}$.

\begin{theorem}\label{T:relative_modularity}
 The category $\calC^H$ is relative modular.
\end{theorem}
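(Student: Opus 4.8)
The category $\calC^H$ is already known to be a non-degenerate pre-modular $G$-category relative to $(Z,X)$: the compatible $G$-structure, the free realization $\sigma \colon Z \to \calC^H_0$, the non-zero m-trace $\rmt$ on $\Proj(\calC^H)$, generic semisimplicity, and the finite sets $\Theta(\calC^H_g) = \{ V_i \mid i \in \rmI_g \}$ of simple projective objects with epic evaluation are all provided by \cite{GP18} (non-degeneracy being available already for the ribbon subcategory $\calD^\vartheta = \calC^H$ through \cite{CGP14}). The only remaining point is the relative modularity condition of Definition \ref{D:relative_modular}, that is, the existence of a parameter $\zeta_\Omega \in \Bbbk^*$ realizing the skein equivalence of Figure \ref{F:relative_modularity} for all $g,h \in G \smallsetminus X$ and all $i,j \in \rmI_g$. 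The plan is to reduce this identity, through the Reshetikhin-Turaev functor $F_{\calC^H} \colon \calR_{\calC^H} \to \calC^H$, to an explicit representation-theoretic computation inside $U^H_q \frakg$, following the pattern established for $\frakg = \sltwo$ at even roots in \cite{BCGP16}.

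Concretely, since $g$ is generic the subcategory $\calC^H_g$ is semisimple and the objects $V_i, V_j$ are simple, so the image under $F_{\calC^H}$ of the left-hand side of Figure \ref{F:relative_modularity} is a scalar multiple of $\delta_{i,j} \cdot \id$; by the definition of the Kirby color $\Omega_h$, that scalar is a sum over $k \in \rmI_h$ of $\rmd(V_k)$ times a Hopf-type link invariant built from the colors $V_i$, $V_j$ and $V_k$. Evaluating these reduces to computing double braidings of the form $c_{V_k,V_i} \circ c_{V_i,V_k} \in \End_{\calC^H}(V_i \otimes V_k)$ with the explicit R-matrix of $U^H_q \frakg$, which splits as a Cartan factor, namely $q$ to a fixed symmetric bilinear form in the generators $H_1, \dots, H_n$, times a sum of PBW monomials in the nilpotent root vectors. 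On typical modules, whose weights, highest-weight vectors and PBW structure are completely transparent, this operator can be written down explicitly, and the nilpotent corrections, once traced against the $\Omega_h$-loop, contribute only the leading Gaussian scalar, exactly as in the $\sltwo$ case; what survives is $\rmd(V_k)$, known in closed form from \cite{GPT09, GP18} as a ratio of products over $\Phi_+$ of quantized weights, times a scalar depending only on the $G$-degrees of $V_i$ and $V_k$.

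Summing $\rmd(V_k)$ against these scalars over $k \in \rmI_h$, which is a set of representatives modulo the $Z$-action for the typical weights lying in the coset $h$, recasts the whole expression as a finite Gauss sum attached to the symmetrized Cartan form on $\Lambda_R$ read modulo $r \Lambda_W \cap \Lambda_R$. A standard orthogonality and reciprocity computation shows that this factors through a non-degenerate quadratic form, hence equals a product of classical one-variable Gauss sums; in particular it does not depend on $g$, $h$, $i$ or $j$, consistently with the identity $\zeta_\Omega = \Delta_{-\Omega} \Delta_{+\Omega}$ from \cite{D17}, where $\Delta_{\pm\Omega}$ are the stabilization coefficients of Figure \ref{F:stabilization_coefficients_Omega}. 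The main obstacle is the final non-vanishing $\zeta_\Omega \neq 0$, and this is exactly where the standing hypotheses on $r$ intervene: $r$ odd, and not a multiple of $3$ when $\frakg = \frakg_2$, is precisely what makes the quadratic form underlying these Gauss sums non-degenerate modulo $r$, so that the sums do not vanish. Once such a $\zeta_\Omega \in \Bbbk^*$ has been produced, Definition \ref{D:relative_modular} is fulfilled and $\calC^H$ is relative modular.
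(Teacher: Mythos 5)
Your proposal takes a genuinely different route from the paper, but it has a real gap at the crux of the argument. The paper's proof does not evaluate the double braidings explicitly, nor does it need any Gauss-sum estimates. Instead it proceeds categorically: the handle-slide property shows the morphism
$f_{[\gamma],\mu,\nu}$ obtained from the Kirby-colored meridian around a $V_\mu$-colored and a $V_\nu^*$-colored strand is transparent in $\calC^H_{[0]}$; a separate lemma (Lemma \ref{L:transparency}) shows any transparent endomorphism of an object of $\calC^H_{[0]}$ factors through a direct sum of the one-dimensional objects $\sigma(\kappa)$; and then Hom-space dimension counts force $f_{[\gamma],\mu,\nu}$ to vanish unless $\mu = \nu$, and in that case to factor through $\one$, hence to be a scalar multiple of $\lcoev_{V_\mu} \circ \rev_{V_\mu}$. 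The scalar is then pinned down by comparing m-traces, using the already-available closed formulas for the modified dimension and the open Hopf link invariants; in the sum over $\nu \in \{\gamma\}+\calH_r$, each term contributes exactly $r^{2N}$, so the coefficient is $r^{2N}\left|\calH_r\right|$ with no Gauss-sum cancellation needed.

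The gap in your argument is in the passage from genericity of $g$ to the structure of the endomorphism. You write that, since $\calC^H_g$ is semisimple and $V_i, V_j$ are simple, the image of the left-hand side of Figure \ref{F:relative_modularity} is a scalar multiple of $\delta_{i,j}\cdot\id$. First, the target of the relative modularity identity is not a multiple of the identity but a multiple of the morphism $V_i \otimes V_j^* \to \one \to V_i \otimes V_j^*$ (the ``rank-one projector''), so the shape of the claim is off. More importantly, the endomorphism in question lives in $\End_{\calC^H}(V_i \otimes V_j^*)$, and $V_i \otimes V_j^*$ sits in degree $[0]$: precisely the non-semisimple homogeneous piece, where $\Hom$-spaces between distinct generators are large and $\End(V_i \otimes V_j^*)$ is far from one-dimensional. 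Semisimplicity of $\calC^H_g$ therefore gives you nothing here. The content that you are missing is exactly what the paper's transparency argument supplies: that a sliding invariance constraint forces the morphism to factor through invertible (hence one-dimensional) objects, after which the $\delta_{i,j}$ and the factorization through $\one$ drop out by elementary weight considerations. Without that step, your subsequent reduction to ``tracing the nilpotent corrections against the $\Omega_h$-loop'' is not justified, since you are not computing a trace but trying to identify an endomorphism of a non-simple object.

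The remaining computational steps you sketch (expanding the R-matrix into Cartan and nilpotent parts, summing against the Kirby color, evaluating a Gauss sum, and invoking the parity hypotheses on $r$ for non-vanishing) are plausible in spirit but would need to be carried out in full; the $\sltwo$ precedent does not automatically extend to arbitrary $\frakg$, because the nilpotent part of the PBW-expanded R-matrix has $N$ pairs of root vectors and the required cancellation on typical modules is not a one-line observation. The advantage of the paper's route is that all of this is sidestepped: once transparency is established, the non-vanishing of $\zeta_\Omega$ is immediate from the explicit value $r^{2N}\left|\calH_r\right|$ obtained by the m-trace comparison.
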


The second main result of this paper builds a bridge between this
family of quantum invariants and the renormalized Hennings ones coming
from the corresponding small quantum groups, under the additional
assumption that $\gcd(r,\det(A)) = 1$, where $A$ denotes the Cartan
matrix of $\frakg$. Indeed, in Subsection
\ref{Subs:small_quantum_groups} we recall the definition of a more
classical quantum version of $U(\frakg)$, denoted $\bar{U}_q(\frakg)$,
again for $q = \smash{e^{\frac{2 \pi i}{r}}}$. These \textit{small}
quantum groups are far better known: they are finite-dimensional, as
generators $K_i$ and $K_i^{-1}$ are set to be
quasi-unipotent, they are
ribbon and factorizable, and thus they yield TQFTs in dimension
2+1. The category $\bar{\calC}$ of finite-dimensional representations
of $\bar{U}_q \frakg$ is still non-semisimple, but all weights take
integral values. Indeed, we have a very natural forgetful functor
$\Phi_{\calC}$ from the full subcategory $\smash{\calC^H_{[0]}}$ of
$\calC^H$ whose objects have all weights in $\Lambda_R$ to
$\bar{\calC}$: the image $\bar{V}$ of an object $V$ of
$\smash{\calC^H_{[0]}}$ is simply defined by forgetting the action of
generators $H_i$. The technical condition $\gcd(r,\det(A))=1$
ensures
$\Phi_{\calC}$ is essentially
surjective. Furthermore, $\Phi_{\calC}$ immediately induces a functor
$\Phi_{\calR}$ from $\calR_{\smash{\calC^H_{[0]}}}$ to
$\calR_{\bar{\calC}}$: the image $\bar{T}$ of a morphism $T$ of
$\calR_{\smash{\calC^H_{[0]}}}$ is simply defined by applying the
forgetful functor $\Phi_{\calC}$ to all its colors.

\begin{theorem}\label{T:main_result}
 If $M$ is a closed 3-manifold and $T \subset M$ is an admissible $\calC^H_{[0]}$-colored ribbon graph, then
 \[
  \rmN_{\calC^H}(M,T,0) = \rmH'_{\bar{\calC}}(M,\bar{T}).
 \]
\end{theorem}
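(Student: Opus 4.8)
The plan is to compare the two invariants via their definitions through surgery presentations in $S^3$, reducing everything to a statement about the Hennings-Reshetikhin-Turaev functor on colored graphs with Kirby-colored surgery components. Fix the simple Lie algebra $\frakg$ and the odd root of unity $q$, subject to $\gcd(r,\det(A))=1$. Since $T$ is a $\calC^H_{[0]}$-colored ribbon graph, its meridians carry the trivial grading, so the compatible cohomology class is necessarily $\omega=0$, and the CGP invariant $\rmN_{\calC^H}(M,T,0)$ is computed from a surgery presentation $L\subset S^3$ which is computable with respect to $(T,0)$, i.e.\ whose components are labeled by the Kirby color $\Omega_0$ of index $0\in G\smallsetminus X$ (note $0\notin X$ since $0\notin\Z\smallsetminus\{0\}$ forces no positive root to pair integrally in the forbidden way — more carefully, $[\xi]=0$ means $\xi\in\Lambda_R$ and then $2\langle\alpha,\xi\rangle\in\Z$ always, so actually $0\in X$; thus one must first apply a projective or generic stabilization to $(T,0)$ to obtain a computable presentation, exactly as in Section 3.1 of \cite{D17}). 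On the other side, $\rmH'_{\bar\calC}(M,\bar T)$ is computed from the \emph{same} underlying framed link $L$, now viewed as a red bichrome graph colored by the regular representation $\bar U_q\frakg$, with $\bar T$ blue.

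The first key step is to identify the stabilization coefficients and hence the normalization factors. I would show $\Phi_\calC$ sends the free realization object $\sigma(0)=\one$ and, more importantly, that under $\Phi_\calC$ the Kirby color $\Omega_0$ of $\calC^H$ maps to a skein-equivalent Kirby-type element in $\bar\calC$; the cleanest route is to prove directly that for any projective $V$,
\[
 \rmt_V\bigl(F_{\calC^H}(\text{$V$ encircled by an $\Omega_0$-colored unknot})\bigr)
 = \rmt_{\bar V}\bigl(F_\lambda(\text{$\bar V$ encircled by a red meridian})\bigr),
\]
which pins down $\Delta_{\pm\Omega}=\Delta_{\pm\lambda}$ and therefore $\calD_\Omega=\calD_\lambda$, $\delta_\Omega=\delta_\lambda$ up to the consistent choice of square roots. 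This is essentially Lemma 5.9/5.10 of \cite{CGP14} compared against the Hennings encircling computation (the integral $\lambda$), using that $\gcd(r,\det(A))=1$ makes $\Phi_\calC$ essentially surjective so that projective generators of $\bar\calC$ are all hit.

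The heart of the proof is then the identity, for the specific closed colored graph $L\cup\tilde T$ cut open along a projective edge into an endomorphism of $(+,V)$ for projective $V\in\calC^H_{[0]}$,
\[
 \rmt_V\bigl(F_{\calC^H}((L\cup\tilde T)_V)\bigr)
 = \rmt_{\bar V}\bigl(F_\lambda(\Phi_\calR((L\cup\tilde T)_V))\bigr).
\]
Since $\rmt_V$ is determined by $\rmt_H(f)=\lambda(gf(1))$ on the regular representation and $\Phi_\calC$ intertwines the m-traces (both pulled back from the same right integral $\lambda$ of $\bar U_q\frakg$ — here one checks that the integral/cointegral data of $\bar U_q\frakg$ is the relevant one, the $H_i$ being invisible), the statement reduces to: $F_{\calC^H}$ followed by the forgetful functor agrees with $F_\lambda\circ\Phi_\calR$ on graphs whose blue part is $\tilde T$ and whose red part is the Kirby-colored surgery link. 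For the blue part this is immediate because $\Phi_\calC$ is a ribbon functor (it commutes with braiding, twist, and duality — the pivotal element $g$, the $R$-matrix, and the ribbon element of $U^H_q\frakg$ restrict to those of $\bar U_q\frakg$ on weights in $\Lambda_R$, the $H_i$-dependent exponential factors being trivial there). For the red part one invokes Lemma 5.9 of \cite{CGP14}, which identifies the effect of an $\Omega_0$-colored component with the action of a specific central element, and compares it with the defining property of the Hennings functor, namely that a red surgery component acts by $\lambda$ applied along the doubled strands; the coincidence of these two operations is exactly the content of the Hennings-vs-RT comparison for the small quantum group and was essentially established in \cite{DGP18} (Proposition 2.5 and its identification of $F_\lambda$ with the Hennings invariant in the all-red case).

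The main obstacle I anticipate is the bookkeeping of \emph{which} surgery presentation is used on each side: the CGP invariant requires a \emph{computable} presentation (every surgery meridian evaluated outside $X$) and possibly a stabilization of $(T,0)$, whereas the Hennings invariant accepts any presentation. I would handle this by first proving invariance of the right-hand side under the projective and generic stabilization moves — generic stabilization adds an $\Omega_{g}$-colored unknot with $g\notin X$, which on the Hennings side is a red meridian split off, contributing the factor $\calD_\lambda$ that is absorbed by the exponent $-1-\ell$; projective stabilization adds a $0$-framed Hopf link with one projective blue component, whose $F_\lambda$-evaluation I must compute and match. Once both invariants are seen to be computed from the \emph{same} link $L$ with \emph{compatible} normalizations and the colored-graph evaluations agree edgewise, the equality $\rmN_{\calC^H}(M,T,0)=\rmH'_{\bar\calC}(M,\bar T)$ follows. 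A secondary subtlety is ensuring that admissibility transfers correctly, i.e.\ that a projective edge of $T$ maps to a projective blue edge of $\bar T$; this again uses that $\Phi_\calC$ preserves projectivity, which follows from essential surjectivity and the fact that $\Phi_\calC$ sends the regular representation to the regular representation.
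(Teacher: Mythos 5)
Your overall strategy — compare the two invariants via a common surgery presentation, show $\Phi_{\calC}$ is a ribbon functor that intertwines the graphical calculi, and match normalizations — is the right one, and you correctly self-correct the observation that $0 \in X$, so that stabilization is unavoidable on the CGP side. However, there are three concrete gaps where the argument as written would fail.

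First, your appeal to ``the regular representation'' of $U^H_q \frakg$ is not available: $U^H_q \frakg$ is infinite-dimensional, and its regular representation is not a finite-dimensional weight module, so it does not live in $\calC^H_{[0]}$ and cannot be sent to $\bar U$ by $\Phi_{\calC}$. The paper must instead build a \emph{relative} projective generator $\bfP = \bigoplus_{\mu \in \calH_r} P_\mu$ in $\calC^H_{[0]}$ (Lemma \ref{L:projective_generator}), prove $\bar{\bfP}$ is projective (Lemma \ref{L:projectives}), and — using the hypothesis $\gcd(r,\det(A)) = 1$ essentially for the first time — prove $\bar{\bfP}$ is a genuine projective generator of $\bar\calC$ (Proposition \ref{P:barP_proj_gen}). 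Your assertion that projectivity is preserved ``because $\Phi_\calC$ sends the regular representation to the regular representation'' therefore rests on a false premise; the paper proves it via the typical module $V_{(r-1)\cdot\rho}$.

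Second, the m-traces do \emph{not} agree under $\Phi_{\calC}$: $\rmt^H$ is normalized by $\rmd^H(V_{(r-1)\cdot\rho}) = 1$ and $\bar\rmt$ by $\bar\rmt_{\bar U}(\Lambda\circ\varepsilon)=1$, and these pull back to proportional but \emph{distinct} m-traces on $\Proj(\calC^H_{[0]})$. The paper introduces a nonzero scalar $\alpha$ with $\restr{\rmt^H}{\calC^H_{[0]}} = \alpha\cdot\bar\rmt\circ\Phi_{\calC}$ (Remark \ref{R:comparison_traces}) and shows $\calD_\Omega = \alpha\calD_\lambda$ (not $\calD_\Omega = \calD_\lambda$); the factor $\alpha$ is then tracked through the final chain of equalities, where it cancels precisely because the surgery link has $\ell$ components and the exponent is $-1-\ell$. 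Stating $\Delta_{\pm\Omega}=\Delta_{\pm\lambda}$ outright, as you do, is incorrect under the definitions in force, and without accounting for $\alpha$ the final bookkeeping does not close.

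Third, and most importantly, the identity that a Kirby-colored meridian and a red ($\bar U$-colored) meridian produce the same element after cutting is the genuine mathematical content of the theorem, and your proposal treats it as a routine consequence of known results (``invoke Lemma 5.9 of \cite{CGP14}'' and ``essentially established in \cite{DGP18}''). In the paper this step requires new work: Lemma \ref{L:cutting_bfP} establishes a cutting property for Kirby-colored meridians through $\bfP$-colored strands, Lemma \ref{L:injective_map_from_torus_to_disc} proves an injectivity statement between TQFT state spaces (so that equality inside the ball implies equality of solid-torus vectors), and Lemma \ref{L:cutting_inside_ball} combines these with an explicit computation of $\rmH'_{\bar\calC}(\bbS^2_{(+,\bar\bfP)}\times\bbS^1) = 1$ (using Lemma \ref{L:dimOne}) to pin down both the proportionality constant and the relation between $\calD_\Omega$ and $\calD_\lambda$. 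None of this is a direct consequence of the cited results; it is the core of the proof, and your sketch does not supply a replacement for it.
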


We point out that each invariant depends on the choice of a square
root of the product of the stabilization coefficients for the
corresponding version of the quantum group. Theorem
\ref{T:main_result} requires a coherent choice of these square roots
in the two theories, otherwise a sign will appear in the
relation. Remark also that a great advantage of the renormalized
Hennings theory is the absence of many technical complications which
characterize the CGP one. For instance, arbitrary surgery
presentations can be used to define and compute quantum invariants
associated with small quantum groups, as we have no computability
condition. Therefore, Theorem \ref{T:main_result} gives a convenient
alternative formulation for the CGP invariants associated with
unrolled quantum groups, at least in the case of 3-manifolds decorated
with the trivial cohomology class.

Let us end this introduction with a final comment 
about our choice for the setting of Theorem
\ref{T:main_result}. When we started writing these results, the
renormalized Hennings construction had only been developed in the case
of finite-dimensional factorizable ribbon Hopf algebras. As a
consequence, a comparison with the CGP construction had to take place
in this context.
The classification of finite-dimensional factorizable quantum groups is
not so easy, see \cite{LO17}, and we choose for simplicity to limit 
ourselves to the smallest set of Cartan generators $\{ K_i \mid 1 \leq i \leq n \}$
at odd roots of unity.
However, a recent generalization of the renormalized Hennings
construction allows us to consider arbitrary modular categories, in
the non-semisimple sense, as building blocks for non-semisimple
2+1-TQFTs \cite{DGGPR}. This larger setting encompasses examples of
ribbon categories coming from the representation theory of restricted
quantum groups at even roots of unity. Indeed, while
some of these Hopf algebras might not be braided \cite{LN15,LO17}, a
suitable modification of their coalgebra structure produces quasi-Hopf
deformations which admit a ribbon structure
\cite{CGR19,GLO18,N18}. For what concerns unrolled quantum groups,
even roots of unity have been less studied due to several technical
difficulties which produce fascinating but complicated phenomena in
the theory. It would be very interesting to generalize Theorem
\ref{T:main_result} to this setting, and possibly to even larger ones.

\section{Quantum groups at odd roots of unity}

In this section we recall definitions of small and unrolled quantum groups associated with arbitrary simple complex Lie algebras $\frakg$, and we prove our first result: categories of finite-dimensional weight representations of unrolled quantum groups at odd roots of unity are relative modular, and can therefore be used to construct ETQFTs in dimension 1+1+1.

\subsection{Small quantum groups}\label{Subs:small_quantum_groups}

Let $\frakg$ be a simple complex Lie algebra of rank $n$ and dimension $2N + n$, let $\frakh$ be a Cartan subalgebra of $\frakg$, let $\Phi_+$ be a set of positive roots of $\frakg$, and let $\calU_q \frakg$ be the associated quantum group, over a formal parameter $q$, introduced in Appendix \ref{A:quantum_groups}. Let us fix an odd integer $r \geq 3$ with the further condition that $r \not\equiv 0$ modulo $3$ if $\frakg = \frakg_2$, and let us specialize $q$ to $e^{\frac{2 \pi i}{r}}$ in the De Concini-Kac sense 
\cite{DK90}. Let $\bar{U}_q \frakg$ denote the \textit{small quantum group of $\frakg$}, which is the $\C$-algebra obtained from $\calU_q \frakg$ by adding relations
\[
 K_{\mu} = 1, \quad E_{\alpha}^r = F_{\alpha}^r = 0
\]
for every $\mu \in \Lambda_R \cap r \cdot \Lambda_W$ and every $\alpha \in \Phi_+$. Then $\bar{U}_q \frakg$ inherits from $\calU_q \frakg$ the structure of a Hopf algebra, and we denote with $\bar{U}_q \frakh$, with $\bar{U}_q \frakn_+$, and with $\bar{U}_q \frakn_-$ the subalgebras of $\bar{U}_q \frakg$ generated by $\{ K_i \mid 1 \leq i \leq n \}$, by $\{ E_i \mid 1 \leq i \leq n \}$, and by $\{ F_i \mid 1 \leq i \leq n \}$, respectively. As proved in \cite{L90}, see also Theorem 30 of \cite{GP13}, a Poincaré-Birkhoff-Witt basis is given by
\[
 \left\{ \left( \prod_{k = 1}^N F_{\beta_k}^{c_k} \right) K_{\mu} \left( \prod_{k = 1}^N E_{\beta_k}^{b_k} \right) \Biggm| 
 \begin{array}{l}
  \mu \in \Lambda_R / (\Lambda_R \cap r \cdot \Lambda_W), \\
  0 \leq b_1,\ldots,b_N < r, \\
  0 \leq c_1,\ldots,c_N < r
 \end{array} \right\},
\]
so $\bar{U}_q \frakg$ is finite-dimensional. A pivotal element is given by $K_{2 \cdot \rho} \in \bar{U}_q \frakg$ where
\[
 \rho := \frac{1}{2} \cdot \sum_{k=1}^N \beta_k.
\]
Furthermore, if we consider $\bar{R}_0 \in \bar{U}_q \frakh \otimes \bar{U}_q \frakh$ given by
\[
 \bar{R}_0 := \frac{1}{\left| \Lambda_R / (\Lambda_R \cap r \cdot \Lambda_W) \right|} \cdot \sum_{\mu,\mu' \in \Lambda_R / (\Lambda_R \cap r \cdot \Lambda_W)} q^{- \langle \mu,\mu' \rangle} \cdot 
 K_{\mu} \otimes K_{\mu'}
\]
and $\bar{\Theta} \in \bar{U}_q \frakn_+ \otimes \bar{U}_q \frakn_-$ given by
\[
 \bar{\Theta} := \sum_{b_1,\ldots,b_N=0}^{r-1} 
 \left( \prod_{k=1}^N \frac{\{ 1 \}_{\beta_k}^{b_k}}{[b_k]_{\beta_k}!} q^{\frac{b_k(b_k-1)}{2}} \right) \cdot 
 \left( \prod_{k=1}^N E_{\beta_k}^{b_k} \right) \otimes 
 \left( \prod_{k=1}^N F_{\beta_k}^{b_k} \right)
\]
then $\bar{R} := \bar{R}_0 \bar{\Theta} \in \bar{U}_q \frakg \otimes \bar{U}_q \frakg$ is an R-matrix for $\bar{U}_q \frakg$, as proved in \cite{L90}, see also \cite{LO17}. Next, thanks to Proposition A.5.1 of \cite{L95}, a right integral $\lambda$ of $\bar{U}_q \frakg$ is given by
\[
 \lambda \left( \left( \prod_{k = 1}^N F_{\beta_k}^{c_k} \right) K_{\mu} \left( \prod_{k = 1}^N E_{\beta_k}^{b_k} \right) \right) = q^{- 4 \langle \rho,\rho \rangle} \delta_{\mu,2 \cdot \rho} 
 \prod_{k=1}^N \delta_{b_k,r-1} \prod_{k=1}^N \delta_{c_k,r-1}.
\]
This formula can be deduced from the one in \cite{L95} by remarking that Lyubashenko uses Luszitg's coproduct $\tilde{\Delta} := (\omega \otimes \omega) \circ \Delta^\op \circ \omega$, where $\omega$ denotes the involutive algebra automorphism of $\bar{U}_q \frakg$ defined by $\omega(E_i) = F_i$ and by $\omega(K_i) = K_i^{-1}$ for all integers $1 \leq i \leq n$, and by remarking that $\lambda$ is a right integral for $\Delta$ if and only if $\lambda \circ \omega$ is a left integral for $\tilde{\Delta}$. Finally, thanks to Proposition A.5.2 of \cite{L95}, a two-sided cointegral $\Lambda$ of $\bar{U}_q \frakg$ satisfying $\lambda(\Lambda) = 1$ is given by
\[
 \Lambda := \sum_{\mu \in \Lambda_R / (\Lambda_R \cap r \cdot \Lambda_W)} q^{2 \langle \mu,\rho \rangle} \cdot \left( \prod_{k = 1}^N F_{\beta_k}^{r-1} \right) K_{\mu} \left( \prod_{k = 1}^N E_{\beta_k}^{r-1} \right).
\]

\begin{proposition}
 The Hopf algebra $\bar{U}_q \frakg$ is factorizable and ribbon.
\end{proposition}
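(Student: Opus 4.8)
The strategy is to verify the two properties separately, but to deduce both essentially from the structure of the R-matrix $\bar R = \bar R_0 \bar\Theta$ together with the explicit formula for the integral $\lambda$. For the ribbon structure, note that $\bar U_q\frakg$ inherits from $\calU_q\frakg$ a quasitriangular Hopf algebra structure once we check that $\bar R$ is a well-defined element (the relations $K_\mu = 1$ for $\mu \in \Lambda_R \cap r\cdot\Lambda_W$ and $E_\alpha^r = F_\alpha^r = 0$ are compatible with the sums defining $\bar R_0$ and $\bar\Theta$, which is why those sums are finite). Then I would produce the ribbon element in the standard way: compute the Drinfeld element $u = \sum S(b_i) a_i$, verify that $u K_{2\rho}^{-1}$ (equivalently $v := u^{-1} K_{2\rho}$, matching the pivotal element $g = K_{2\rho}$ quoted above) is central, satisfies $v^2 = u S(u)$, $\varepsilon(v) = 1$, $\Delta(v) = (R_{21}R)^{-1}(v\otimes v)$, and $S(v) = v$. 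The centrality and the comultiplication identity follow from the corresponding properties of $u$ plus the fact that $K_{2\rho}$ is grouplike and implements $S^2$ by conjugation; this is a routine but somewhat lengthy quantum-group computation, and it is exactly the kind of verification carried out in \cite{L90,GP13} for the closely related $\calU_q\frakg$, so I would cite those and indicate the (minor) modifications.

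**Factorizability.** The main content is showing the Drinfeld map $\psi_H : H^* \to H$, $f \mapsto \sum_{i,j} f(b_j a_i)\, a_j b_i$, is bijective. Because $\dim H^* = \dim H < \infty$, it suffices to show $\psi_H$ is injective, or equivalently surjective. The cleanest route is to factor $\psi_H$ through the "triangular decomposition" of $\bar R$: writing $\bar R = \bar R_0 \bar\Theta$ with $\bar R_0 \in \bar U_q\frakh^{\otimes 2}$ and $\bar\Theta \in \bar U_q\frakn_+\otimes\bar U_q\frakn_-$, and similarly the monodromy $\bar R_{21}\bar R$, one checks that $\psi_H$ decomposes (up to triangular unipotent corrections) as a composite of three maps: the Drinfeld-type map attached to $\bar\Theta$ landing in $\bar U_q\frakn_-$, the one attached to $\bar\Theta_{21}$ landing in $\bar U_q\frakn_+$, and the one attached to $\bar R_0\bar R_{0,21}$ on the Cartan part. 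The Cartan factor is governed by the bilinear form $(\mu,\mu') \mapsto q^{-2\langle\mu,\mu'\rangle}$ on $\Lambda_R/(\Lambda_R\cap r\cdot\Lambda_W)$, whose non-degeneracy is precisely guaranteed by the hypothesis $\gcd(r,\det A)=1$ mentioned in the introduction (this makes the associated pairing on the quotient group perfect). The nilpotent factors are non-degenerate because the relevant Gaussian-binomial coefficients $\{1\}_{\beta_k}/[b_k]_{\beta_k}!$ appearing in $\bar\Theta$ are nonzero for $0\le b_k\le r-1$ at an odd primitive $r$-th root of unity (here one uses that $r$ is odd, and $r\not\equiv 0\bmod 3$ for $\frakg_2$, so none of the relevant quantum integers vanish). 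Combining, $\psi_H$ is a composite of isomorphisms, hence an isomorphism. Alternatively, one can invoke the criterion that factorizability is equivalent to the non-degeneracy of the Hopf pairing induced by the monodromy, and verify non-degeneracy on the PBW basis directly using the formula for $\lambda$: the integral formula shows $\lambda$ pairs the "top" PBW monomial $\bigl(\prod F_{\beta_k}^{r-1}\bigr)K_{2\rho}\bigl(\prod E_{\beta_k}^{r-1}\bigr)$ nontrivially, which together with the grading by $\Lambda_R/(\Lambda_R\cap r\cdot\Lambda_W)$ and the triangular structure forces non-degeneracy.

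**Expected main obstacle.** The ribbon part is essentially bookkeeping once the quasitriangular structure on $\calU_q\frakg$ (specialized à la De Concini--Kac) is in hand; the genuine work is in the factorizability, and within that the delicate point is the non-degeneracy of the Cartan pairing on $\Lambda_R/(\Lambda_R\cap r\cdot\Lambda_W)$. One must be careful that $\Lambda_R\cap r\cdot\Lambda_W$ is exactly the radical of the mod-$r$ reduction of the form $\langle\cdot,\cdot\rangle$ (suitably normalized) on $\Lambda_R$; this is where $\gcd(r,\det A)=1$ enters, since $\det A = [\Lambda_W:\Lambda_R]$ controls the discrepancy between the root and weight lattices, and coprimality with $r$ ensures no extra kernel is introduced upon reduction. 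I would spell this lattice-theoretic claim out as a small lemma, then assemble the three factors. A secondary nuisance is making the "triangular factorization of $\psi_H$" precise — the corrections coming from commuting Cartan past nilpotent generators are unitriangular with respect to the root-height filtration, hence do not affect invertibility, but this needs a clean statement of the filtration argument rather than a hand-wave. Everything else (finite-dimensionality, the PBW basis, the explicit $\lambda$ and $\Lambda$) has already been recorded above and can be used directly.
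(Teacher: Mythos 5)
The paper's own ``proof'' of this proposition is a bare citation: factorizability and the ribbon structure of $\bar{U}_q\frakg$ are taken from Lyubashenko (Propositions A.5.1--A.5.2 of \cite{L95}) and from \cite{LO17}. Your sketch reconstructs the underlying arguments, which is more than the paper offers, and the overall structure (inherit the quasitriangular structure, produce the ribbon element from $u$ and $K_{2\rho}$, factor the Drinfeld map through the triangular decomposition of $\bar R = \bar R_0\bar\Theta$) is the standard one and consistent with those references.

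There is, however, a genuine error in your Cartan step. You assert, twice, that non-degeneracy of the pairing $(\mu,\mu')\mapsto q^{-2\langle\mu,\mu'\rangle}$ on $\Lambda_R/(\Lambda_R\cap r\Lambda_W)$ is ``precisely guaranteed by'' $\gcd(r,\det A)=1$, and that $\Lambda_R\cap r\Lambda_W$ being the radical is ``where $\gcd(r,\det A)=1$ enters.'' But the present proposition is stated, and is true, without that hypothesis: in the paper $\gcd(r,\det A)=1$ is only introduced as a hypothesis of the later comparison theorem, specifically to make the forgetful functor $\Phi_\calC$ essentially surjective. The standing hypotheses ($r\geq 3$ odd, coprime to $3$ for $\frakg_2$, hence $\gcd(r,2d_i)=1$ for all $i$) already force the radical computation. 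Indeed, for $\mu=\sum_i m_i\alpha_i\in\Lambda_R$ one has $\langle\mu,\alpha_j\rangle = d_j\sum_i m_i a_{ji}$, so the condition $2\langle\mu,\alpha_j\rangle\in r\Z$ for all $j$ is equivalent to $\sum_i m_i a_{ji}\in r\Z$ for all $j$, which is exactly the condition $\mu\in r\Lambda_W$. Thus the radical inside $\Lambda_R$ is already $\Lambda_R\cap r\Lambda_W$ and the induced form on the quotient is perfect with no constraint on $\det A$. Your lattice-theoretic remark conflates two issues: $\det A=[\Lambda_W:\Lambda_R]$ governs the index of $\Lambda_R$ in $\Lambda_W$, which matters for lifting a $\bar{U}_q\frakh$-character to an integral weight (this is the essential-surjectivity question), but the radical computation above happens entirely within $\Lambda_R$ and never sees $\det A$. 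If you keep the extra hypothesis, you are proving a strictly weaker statement than the one asserted. A minor additional slip: you write $v:=u^{-1}K_{2\rho}$ as ``equivalent'' to $uK_{2\rho}^{-1}$; with the paper's convention $g=uv^{-1}=K_{2\rho}$ these two expressions are mutually inverse, not equal, so only $v=uK_{2\rho}^{-1}$ is the ribbon element.
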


This result is proved in \cite{L95}, see also \cite{LO17}. We denote with $\bar{\calC}$ the ribbon category of finite-dimensional $\bar{U}_q \frakg$-modules, and with $\bar{\rmt}$ the m-trace on $\Proj(\bar{\calC})$ given by Theorem 1 of \cite{BBG18}, which satisfies $\bar{\rmt}_{\bar{U}}(\Lambda \circ \varepsilon) = 1$ for the regular representation $\bar{U}$ of $\bar{U}_q \frakg$.

\begin{corollary}
 The renormalized Hennings invariant $\rmH'_{\bar{\calC}}$ extends to a TQFT
 \[
  \rmV_{\bar{\calC}} : \adCob_{\bar{\calC}} \rightarrow \Vect_{\C}.
 \]
\end{corollary}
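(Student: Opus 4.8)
The plan is to reduce the corollary entirely to the general machinery recalled in Subsection~\ref{Subs:Hennings_TQFTs}, applied to the Hopf algebra $H = \bar{U}_q \frakg$. The key point is that the preceding Proposition establishes exactly the two structural hypotheses needed: $\bar{U}_q \frakg$ is a finite-dimensional ribbon Hopf algebra which is factorizable. As recorded in Subsection~\ref{Subs:Hennings_TQFTs}, factorizability implies both non-degeneracy and unimodularity (citing \cite{H96,R11}), so all the hypotheses of the renormalized Hennings construction of \cite{DGP18} are in place. Hence $\bar{\calC} = \bar{U}_q \frakg$-mod is a ribbon linear category over $\C$, it carries the non-degenerate m-trace $\bar{\rmt}$ on $\Proj(\bar{\calC})$ furnished by Theorem~1 of \cite{BBG18}, and the renormalized Hennings invariant $\rmH'_{\bar{\calC}}$ of admissible pairs $(M,T)$ is well defined.

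First I would invoke the result of Section~3 of \cite{DGP18}, recalled verbatim in Subsection~\ref{Subs:Hennings_TQFTs}: for a finite-dimensional \emph{factorizable} ribbon Hopf algebra $H$, the invariant $\rmH'_{\calC}$ extends, via the universal construction of \cite{BHMV95}, to a symmetric monoidal functor $\rmV_{\calC} : \adCob_{\calC} \to \Vect_{\Bbbk}$. Specializing $H = \bar{U}_q \frakg$ and $\Bbbk = \C$ gives precisely the asserted functor $\rmV_{\bar{\calC}} : \adCob_{\bar{\calC}} \to \Vect_{\C}$. Strictly speaking there is nothing further to prove: the corollary is an instantiation of the cited theorem at the specific Hopf algebra whose factorizability was just established. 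If one wishes to be slightly more explicit, one can note that the normalization constants $\Delta_{-\lambda} = \lambda(v)$ and $\Delta_{+\lambda} = \lambda(v^{-1})$ are nonzero by non-degeneracy, so a square root $\calD_{\lambda}$ of their product exists in $\C$, and $\delta_{\lambda} = \calD_{\lambda}/\Delta_{-\lambda}$ is the root of unity governing the signature-defect correction $\rmH'_{\bar{\calC}}(M,T,n) = \delta_{\lambda}^n \rmH'_{\bar{\calC}}(M,T)$ that makes $\rmH'_{\bar{\calC}}$ a genuine invariant of morphisms of $\adCob_{\bar{\calC}}$.

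There is no real obstacle here, since the corollary is a direct application; the only thing to double-check is that $\bar{U}_q \frakg$, as defined above with its R-matrix $\bar R = \bar R_0 \bar\Theta$ and pivotal element $K_{2\rho}$, genuinely falls under the hypotheses of \cite{DGP18}, i.e. that it is a ribbon Hopf algebra in the strict sense used there (in particular that the pivotal element $g$ coincides with $uv^{-1}$ for the Drinfeld element $u$ of $\bar R$). This compatibility is exactly the content of the preceding Proposition together with the explicit formulas for $\lambda$ and $\Lambda$ recorded above, so the hypotheses of \cite{DGP18} are met and the extension $\rmV_{\bar{\calC}}$ exists.
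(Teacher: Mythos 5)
Your proposal is correct and matches the paper's (implicit) reasoning exactly: the corollary is stated without a separate proof precisely because it is the instantiation, at $H = \bar{U}_q\frakg$, of the general extension result from Section~3 of \cite{DGP18} recalled in Subsection~\ref{Subs:Hennings_TQFTs}, with the required hypotheses (finite-dimensional, ribbon, factorizable) supplied by the preceding Proposition.
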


\subsection{Unrolled quantum groups}\label{Subs:unrolled_quantum_groups}

Let $U^H_q \frakg$ denote the \textit{unrolled quantum group of $\frakg$}, which is the $\C$-algebra obtained from $\calU_q \frakg$ by adding generators
\[
 \{ H_i \mid 1 \leqslant i \leqslant n \}
\]
and relations
\[
 [H_i,H_j] = [H_i,K_j] = 0, \quad [H_i,E_j] = a_{ij} E_j, \quad [H_i,F_j] = - a_{ij} F_j, \quad E_{\alpha}^r = F_{\alpha}^r = 0
\]
for every integer $1 \leqslant i,j \leqslant n$ and every positive root $\alpha \in \Phi_+$. Then $U^H_q \frakg$ can be made into a pivotal Hopf algebra by setting
\[
 \Delta(H_i) =  H_i \otimes 1 + 1 \otimes H_i, \quad \varepsilon(H_i) = 0, \quad S(H_i) = -H_i
\]
for every integer $1 \leqslant i \leqslant n$, and we denote with $U^H_q \frakh$, with $U^H_q \frakn_+$, and with $U^H_q \frakn_-$ the subalgebras of $U^H_q \frakg$ generated by $\{ H_i \mid 1 \leqslant i \leqslant n \}$, by $\{ E_i \mid 1 \leqslant i \leqslant n \}$, and by $\{ F_i \mid 1 \leqslant i \leqslant n \}$ respectively. For every $z \in \C$ let us introduce the notation
\[
 q^z := e^{\frac{z 2 \pi i}{r}}, \quad \{ z \} := q^z - q^{-z}.
\]
A $U^H_q \frakg$-module $V$ with action $\rho_V : U^H_q \frakg \rightarrow \End_{\Bbbk}(V)$ is a \textit{weight module} if it is a semisimple $U^H_q \frakh$-module and if for every $\mu \in \frakh^*$ and every $v \in V$ we have
\[
 \rho_V(H_i)(v) = \mu(H_i) \cdot v \quad \Forall 1 \leqslant i \leqslant n \quad \Rightarrow \quad \rho_V(K_i)(v) = q_i^{\mu(H_i)} \cdot v \quad \Forall 1 \leqslant i \leqslant n,
\]
where we are identifying $\frakh$ with the corresponding linear subspace of $U^H_q \frakh$ in the obvious way. We denote with $\calC^H$ the full subcategory of the category of finite-dimensional $U^H_q \frakg$-modules whose objects are weight modules. Then $\calC^H$ can be made into a ribbon category as follows: first of all, a pivotal element is given by $K_{2 \cdot \rho}^{1-r} \in U_q^H \frakg$, where the choice of the exponent is explained in Remark 4 of \cite{GP18}. Furthermore, if $V$ and $V'$ are objects of $\calC^H$, their braiding morphism is given by 
\[
 \begin{array}{rccc}
  c_{V,V'} : & V \otimes V' & \rightarrow & V' \otimes V \\
  & v \otimes v' & \mapsto & \tau_{V,V'}(R^H_{0,V,V'}((\rho_V \otimes \rho_{V'})(\Theta^H)(v \otimes v')))
 \end{array}
\]
for the linear maps $R^H_{0,V,V'} : V \otimes V' \rightarrow V \otimes V'$ and $\tau_{V,V'} : V \otimes V' \rightarrow V' \otimes V$ determined by
\[
 R^H_{0,V,V'}(v \otimes v') := q^{\langle \nu,\nu' \rangle} \cdot v \otimes v', \quad \tau_{V,V'}(v \otimes v') := v' \otimes v
\]
for all $v \in V$, $v' \in V'$ satisfying
\[
 \rho_V(H_i)(v) = \nu(H_i) \cdot v, \quad \rho_{V'}(H_i)(v') = \nu'(H_i) \cdot v'
\]
for every integer $1 \leqslant i \leqslant n$, and for $\Theta^H = \bar{\Theta} \in U^H_q \frakn_+ \otimes U^H_q \frakn_- \cong \bar{U}_q \frakn_+ \otimes \bar{U}_q \frakn_-$. Thanks to Theorem 4 of \cite{GP18}, $\calC^H$ is a ribbon category.

If we set $G := \frakh^* / \Lambda_R$, then $\calC^H$ supports the structure of a $G$-category: indeed, for every $\gamma \in \frakh^*$ we can define the homogeneous subcategory $\calC^H_{[\gamma]}$ to be the full subcategory of $\calC^H$ with objects given by modules whose weights are all of the form $\gamma + \mu$ for some $\mu \in \Lambda_R$. Furthermore, if we set $\PGr := \Lambda_R \cap (r \cdot \Lambda_W)$, then we have a free realization $\sigma : \PGr \rightarrow \calC^H_{[0]}$ mapping every $\kappa \in \PGr$ to the object $\sigma(\kappa) \in \calC^H_{[0]}$ given by the vector space $\C$ with $U^H_q \frakg$-action specified by
\[
 \rho_{\sigma(\kappa)}(H_i)(1) := \kappa(H_i), \quad \rho_{\sigma(\kappa)}(E_i)(1) := 0, \quad \rho_{\sigma(\kappa)}(F_i)(1) := 0
\]
for every integer $1 \leqslant i \leqslant n$.
%
%
Now the bilinear map 
\[
 \begin{array}{rccc}
  \psi : & G \times \PGr & \rightarrow & \C^* \\
  & ([\gamma],\kappa) & \mapsto & q^{2\langle \gamma,\kappa \rangle}
 \end{array}
\]
satisfies $c_{\sigma(\kappa),V} \circ c_{V,\sigma(\kappa)} = \psi([\gamma],\kappa) \cdot \id_{V \otimes \sigma(\kappa)}$ for every $\gamma \in \frakh^*$, every $V \in \calC^H_{[\gamma]}$, and every $\kappa \in \PGr$. If we consider the critical set
\[
 X := \{ [\xi] \in \frakh^* / \Lambda_R \mid \Exists \alpha \in \Phi_+ : 2 \langle \alpha,\xi \rangle \in \Z \}
\]
then, as explained in Section 7 of \cite{CGP14}, the category $\calC^H_{[\gamma]}$ is semisimple for every $[\gamma] \in G \smallsetminus X$. Therefore, the last relevant piece of structure we are missing is an m-trace. In order to define it, let us introduce typical $U^H_q\frakg$-modules. First of all, we say a vector $v_+$ of a $U^H_q\frakg$-module $V$ is a \textit{highest weight vector} if $\rho_V(E_i)(v_+) = 0$ for every integer $1 \leqslant i \leqslant n$. Analogously, we say a vector $v_-$ of $V$ is a \textit{lowest weight vector} if $\rho_V(F_i)(v_-) = 0$ for every integer $1 \leqslant i \leqslant n$. Then for every weight $\mu \in \frakh^*$ there exists a simple finite-dimensional weight $U^H_q\frakg$-module $V_{\mu}$ featuring a highest weight vector of weight $\mu$. This module is unique up to isomorphism, and every simple weight $U^H_q\frakg$-module is of this form, see Proposition 33 of \cite{GP13}. Every such module also has a lowest weight vector, and it is called \textit{typical} if its lowest weight is given by $\mu - 2(r-1) \cdot \rho$. If we consider the set 
\[
 \ddot{\frakh}^* := \{ \gamma \in \frakh^* \mid 2 \langle \alpha,\gamma + \rho \rangle + m \langle \alpha,\alpha \rangle \not\in r \Z \ \Forall \alpha \in \Phi_+, \ \Forall 1 \leqslant m \leqslant r-1 \}
\]
then, thanks to Proposition 34 of \cite{GP13}, $V_{\gamma}$ is typical if and only if $\gamma \in \ddot{\frakh}^*$. Remark that if $\gamma \in \frakh^*$ satisfies $2 \langle \alpha,\gamma \rangle \not\in \Z$ for every $\alpha \in \Phi$, then $\gamma \in \ddot{\frakh}^*$. This means that if $\gamma \in \frakh^*$ satisfies $[\gamma] \not\in X$, then $V_{\gamma}$ is typical. We also point out that, although $[(r-1) \cdot \rho] \in X$, the module $V_{(r-1) \cdot \rho}$ is always typical, because
\[
 2 \langle \alpha,(r-1) \cdot \rho + \rho \rangle + m \langle \alpha,\alpha \rangle = 2r \langle \alpha,\rho \rangle + m d_{\alpha}
\]
is not in $r \Z$ for any integer $1 \leq m \leq r-1$. Now, thanks to Lemma 7.1 of \cite{CGP14} and Theorem 38 of \cite{GP13}, every typical $U^H_q\frakg$-module is projective and ambidextrous. Then, by combining Theorem 3.3.2 of \cite{GKP11} with Lemma 17 of \cite{GPV13}, there exists a non-zero m-trace on the ideal $\Proj(\calC^H)$ of projective objects of $\calC^H$ which is unique up to scalar. Therefore, we can fix the normalization $\rmd^H(V_{(r-1) \cdot \rho}) = 1$. Thanks to Equation (51) and Lemma 47 of \cite{GP13}, for every $\mu \in \ddot{\frakh}^*$ we have
\[
  \rmd^H(V_{\mu}) = \prod_{k=1}^N \frac{r \{ \langle \mu - (r-1) \cdot \rho,\beta_k \rangle \}}{\{ r\langle \mu - (r-1) \cdot \rho,\beta_k \rangle \}}.
\]
For all $\mu,\nu \in \ddot{\frakh}^*$, if $f_{\mu,\nu}^+ := F_{\calC^H}(T_{\mu,\nu}^+)$ and $f_{\mu,\nu}^- := F_{\calC^H}(T_{\mu,\nu}^-)$ for the $\calC^H$-colored ribbon graphs $T_{\mu,\nu}^+$ and $T_{\mu,\nu}^-$ represented in Figure \ref{F:long_Hopf_links}, Proposition 45 of \cite{GP13} gives
\[
 \rmt^H_{V_{\nu}}(f_{\mu,\nu}^+) = r^N q^{2 \langle \mu - (r-1) \cdot \rho,\nu - (r-1) \cdot \rho \rangle},
\]
and, since $V_{\mu}^* \cong V_{2(r-1) \cdot \rho - \mu}$, it also gives
\[
 \rmt^H_{V_{\nu}}(f_{\mu,\nu}^-) = r^N q^{-2 \langle \mu - (r-1) \cdot \rho,\nu + (1-r) \cdot \rho \rangle}
 = r^{2N} \rmt^H_{V_{\nu}}(f_{\mu,\nu}^+)^{-1}.
\]

\begin{figure}[htb]
 \centering
 \includegraphics{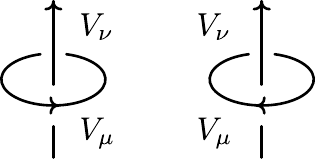}
 \caption{$\calC^H$-colored ribbon graphs $T_{\mu,\nu}^+$ and $T_{\mu,\nu}^-$.}
 \label{F:long_Hopf_links}
\end{figure}

\subsection{Relative modularity}\label{Subs:relative_modularity}

In this subsection we will prove the category $\calC^H$ is relative modular, and thus yields a $\PGr$-graded TQFT. In order to do this, we will first need a preliminary definition. We say an endomorphism $f \in \End_{\calC^H}(V)$ of an object $V$ of $\calC^H_{[0]}$ is \textit{transparent in $\calC^H_{[0]}$} if for all objects $U,W \in \calC^H_{[0]}$ we have 
\[
 \id_U \otimes f = c_{V,U} \circ (f \otimes \id_U) \circ c_{U,V}, \quad
 f \otimes \id_W = c_{W,V} \circ (\id_W \otimes f) \circ c_{V,W}.
\]

\begin{lemma}\label{L:transparency}
 If $f \in \End_{\calC^H}(V)$ is transparent in $\calC^H_{[0]}$, then there exist some integer $m$ and some morphisms $g_i \in \Hom_{\calC^H}(V,\sigma(\kappa_i))$ and $h_i \in \Hom_{\calC^H}(\sigma(\kappa_i),V)$ for every integer $1 \leqslant i \leqslant m$ such that
 \[
  f = \sum_{i=1}^m h_i \circ g_i.
 \]
\end{lemma}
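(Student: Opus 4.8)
The plan is to exploit the structure of $\calC^H_{[0]}$ as a ribbon category together with the free realization $\sigma$ and the explicit form of the braiding on $\calC^H$. First I would reduce to the case where $V$ is a simple object. Indeed, since $\calC^H_{[0]}$ need not be semisimple, I cannot literally decompose $V$, but I can still argue as follows: transparency is stable under composition with arbitrary morphisms in the appropriate sense, and the set of transparent endomorphisms forms a two-sided ideal inside $\End_{\calC^H}(V)$. The key structural input is the explicit braiding formula recalled above: for $v \in V$ of weight $\nu$ and $w \in \sigma(\kappa)$ (weight $\kappa$), the double braiding $c_{\sigma(\kappa),V}\circ c_{V,\sigma(\kappa)}$ acts by the scalar $q^{2\langle\nu,\kappa\rangle}$ on the weight-$\nu$ subspace (the $\Theta^H$-part contributes trivially because $\sigma(\kappa)$ is a highest/lowest weight line killed by all $E_i,F_i$). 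So the transparency condition, tested against objects $U = \sigma(\kappa)$ for all $\kappa \in \PGr$, forces $f$ to preserve each weight space and, when restricted to the weight-$\nu$ space of $V$, to commute appropriately — but more importantly, testing against \emph{all} $U,W \in \calC^H_{[0]}$ is a much stronger condition.

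The heart of the argument is the following: in a ribbon category, an endomorphism $f$ of $V$ that is transparent in the full sense (double braidings through all objects act trivially relative to $f$) factors through the \emph{Müger center} or, in our relative setting, through the image of $\sigma$. Concretely, I would argue that transparency in $\calC^H_{[0]}$ implies that for every simple object $W$ of $\calC^H_{[0]}$ on which the double braiding acts by a nontrivial scalar, the composite $g\circ f\circ h$ vanishes for all $g: V \to W$, $h: W \to V$; hence, using that $\calC^H_{[0]}$ is generically semisimple with simple objects of the form $V_i\otimes\sigma(\kappa)$ (by the relative pre-modularity established in \cite{GP18}, together with Condition (1) of the definition of relative pre-modular category), the only simple constituents through which $f$ can factor are the transparent ones, namely the $\sigma(\kappa)$, $\kappa\in\PGr$. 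Since $f$ lands in $\Proj(\calC^H)$-type considerations and each $\sigma(\kappa)$ is simple projective (it lies in the dominating set for $\calC^H_{[0]}$, as $[0]\in X$ but $\sigma(\kappa)$ is transparent), I can write $f$ as a finite sum of morphisms factoring through the $\sigma(\kappa_i)$, which is exactly the claimed formula $f = \sum_{i=1}^m h_i\circ g_i$.

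The main obstacle I anticipate is that $\calC^H_{[0]}$ is \emph{not} semisimple — indeed $[0]\in X$ — so I cannot simply decompose $V$ into simples and read off the transparent part. To handle this I would work with a suitable typical object: tensor $V$ with a typical module $V_\gamma$ with $[\gamma]\notin X$ so that $V\otimes V_\gamma$ lives in a semisimple homogeneous subcategory $\calC^H_{[\gamma]}$, transport the transparency condition (a transparent $f$ yields a transparent $f\otimes\id_{V_\gamma}$, and conversely one recovers $f$ by a partial trace using the non-degenerate m-trace $\rmt^H$ and $\rmd^H(V_\gamma)\neq 0$), apply the semisimple argument there, and then push the factorization back to $V$ via the m-trace. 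The bookkeeping in this "generic semisimplification" — verifying that the partial-trace recovery of $f$ is compatible with the factorization through copies of $\sigma(\kappa_i)$, and that the double-braiding scalars $q^{2\langle\nu,\kappa\rangle}$ really do separate the $\sigma(\kappa)$ from all other simples — is where the real work lies; everything else is a routine application of the ribbon calculus and the definitions recalled above.
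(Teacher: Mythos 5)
There is a genuine gap: your proposed route via generic semisimplification does not actually close. The issues are concrete. First, the claim ``for every simple $W$ on which the double braiding acts by a nontrivial scalar, $g\circ f\circ h = 0$ for all $g\colon V\to W$, $h\colon W\to V$'' is the standard argument in a \emph{semisimple} ribbon category (decompose $f$ into isotypic pieces, note each piece is a multiple of an identity, and a transparent identity forces the object to be transparent), but $\calC^H_{[0]}$ is not semisimple and you acknowledge you cannot literally decompose $V$ — so the claim is unproven and does not give factorization through $\sigma(\kappa)$'s in a non-semisimple category. Second, the generic-semisimplification rescue has a structural obstruction: after tensoring with a typical $V_\gamma$, the objects $\sigma(\kappa)$ are no longer objects of the ambient graded piece $\calC^H_{[\gamma]}$, so identifying the simple constituents of $V\otimes V_\gamma$ ``through which the factorization goes'' with the $\sigma(\kappa)$'s does not make sense; moreover $f\otimes\id_{V_\gamma}$ is only transparent against $\calC^H_{[0]}$-objects, not against $\calC^H_{[\gamma]}$, so the semisimple Müger-center argument does not directly apply there. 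Third, the partial-trace recovery $f = \rmd^H(V_\gamma)^{-1}\ptr(f\otimes\id_{V_\gamma})$ does not manifestly transport a factorization ``through a simple of $\calC^H_{[\gamma]}$'' into a factorization ``through a $\sigma(\kappa)$ in $\calC^H_{[0]}$''.

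The paper's proof avoids all of this by a direct algebraic argument you did not attempt. It tests transparency only against the single typical module $V_{(r-1)\cdot\rho}\in\calC^H_{[0]}$: braiding its highest weight vector $v_+$ past $f(v)$ and using that $\{\,\rho(\prod F_{\beta_k}^{b_k})(v_+)\,\}$ is a basis forces $\rho_V(\prod E_{\beta_k}^{b_k})(f(v))=0$ for all nontrivial exponents, i.e.\ the $E$'s annihilate $\im f$; the dual argument with the lowest weight vector shows the $F$'s annihilate $\im f$; then $K_i-K_i^{-1}=(q_i-q_i^{-1})[E_i,F_i]$ forces $K_i^2$ to act trivially on $\im f$, so the weights of $\im f$ lie in $Z=\Lambda_R\cap r\Lambda_W$; hence $\im f\cong\bigoplus_i\sigma(\kappa_i)$ as a weight module, and factoring $f=\iota_f\circ\pi_f$ through its image gives the desired $g_i,h_i$. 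This observation — that transparency against one concrete typical module already kills the nilpotent generators on $\im f$ and pins the $K_i$-eigenvalues — is the missing idea; your proposal has the right target but not a working path to it.
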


\begin{proof}
 If $v_+$ is a highest weight vector of $V_{(r-1) \cdot \rho}$ and $v$ is a weight vector of $V$ then $c_{V_{(r-1) \cdot \rho},V}(v_+ \otimes v)$ is proportional to $v \otimes v_+$ because
 \[
  \rho_{V_{(r-1) \cdot \rho}} \left( \prod_{k=1}^N E_{\beta_k}^{b_k} \right)(v_+) = 0
 \]
 for all integers $0 \leq b_1,\ldots,b_N < r$ whose sum is strictly positive. Furthermore, $c_{V,V_{(r-1) \cdot \rho}}(f(v) \otimes v_+)$ is proportional to $v_+ \otimes f(v)$ because $f$ is transparent in $\calC^H$. But now
 \[
  \left\{ \rho_{V_{(r-1) \cdot \rho}} \left( \prod_{k=1}^N F_{\beta_k}^{b_k} \right)(v_+) \Biggm| 0 \leq b_1,\ldots,b_N < r \right\}
 \]
 is a basis of $V_{(r-1) \cdot \rho}$ thanks to Proposition 34 of \cite{GP13}. This means that
 \[
  \rho_V \left( \prod_{k=1}^N E_{\beta_k}^{b_k} \right)(f(v)) = 0
 \]
 for every weight vector $v \in V$ and for all integers $0 \leq b_1,\ldots,b_N < r$ whose sum is strictly positive.

 Analogously, if $v_-$ is a lowest weight vector of $V_{(r-1) \cdot \rho}$ and $v$ is a weight vector of $V$ then  $c_{V,V_{(r-1) \cdot \rho}}(v \otimes v_-)$ is proportional to $v_- \otimes v$ because
 \[
  \rho_{V_{(r-1) \cdot \rho}} \left( \prod_{k=1}^N F_{\beta_k}^{b_k} \right)(v_-) = 0
 \]
 for all integers $0 \leq b_1,\ldots,b_N < r$ whose sum is strictly positive.  Furthermore, $c_{V_{(r-1) \cdot \rho},V}(v_- \otimes f(v))$ is proportional to $f(v) \otimes v_-$ because $f$ is transparent in $\calC^H$. But now
 \[
  \left\{ \rho_{V_{(r-1) \cdot \rho}} \left( \prod_{k=1}^N E_{\beta_k}^{b_k} \right)(v_-) \Biggm| 0 \leq b_1,\ldots,b_N < r \right\}
 \]
 is a basis of $V_{(r-1) \cdot \rho}$ thanks to Proposition 34 of \cite{GP13}. This means that
 \[
  \rho_V \left( \prod_{k=1}^N F_{\beta_k}^{b_k} \right)(f(v)) = 0
 \]
 for every weight vector $v \in V$ and for all integers $0 \leq b_1,\ldots,b_N < r$ whose sum is strictly positive.

 Now, since $K_i - K_i^{-1} = (q_i-q_i^{-1}) \cdot [E_i,F_i]$ for every integer $1 \leqslant i \leqslant n$, we get the equality $\rho_V(K_i)(f(v)) - \rho_V(K_i^{-1})(f(v)) = 0$ for every $v \in V$, which implies 
 \[
  \rho_V(K_i)^2(f(v)) = f(v).
 \]
 Then, if $f(v)$ is a weight vector of weight $\kappa$, this tells us that $2 \langle \kappa,\alpha_i \rangle \in r \Z$ for every integer $1 \leqslant i \leqslant n$. Since $r$ is odd and coprime with $d_i$ for every integer $1 \leqslant i \leqslant n$, this means precisely that $\kappa \in \PGr$. Therefore, each weight vector of $\im f$ determines a 1-dimensional submodule which is isomorphic to $\sigma(\kappa)$ for some $\kappa \in \PGr$. Since $\im f$ is a direct sum of its weight spaces, this means that
 \[
  \im f \cong \bigoplus_{i=1}^m \sigma(\kappa_i)
 \]
 for some integer $m \geqslant 1$ and some $\kappa_1, \ldots, \kappa_m \in \PGr$. Let $\pi_i \in \Hom_{\calC^H}(\im f,\sigma(\kappa_i))$ and $\iota_i \in \Hom_{\calC^H}(\sigma(\kappa_i),\im f)$ denote the corresponding projection and injection morphisms for every integer $1 \leqslant i \leqslant m$. We can factorize $f = \iota_f \circ \pi_f$ where $\pi_f \in \Hom_{\calC^H}(V,\im f)$ is naturally induced by $f$ and $\iota_f \in \Hom_{\calC^H}(\im f,V)$ denotes inclusion. Then the result follows by setting $g_i := \pi_i \circ \pi_f$ and $h_i := \iota_f \circ \iota_i$ for every integer $1 \leqslant i \leqslant m$.
\end{proof}

Let us complete $\{ 0 \} \subset \Lambda_R$ to a set
$\calH_r \subset \Lambda_R$ of representatives of equivalence classes
in $\Lambda_R / \PGr$. Similarly, let us choose a set
$I_{G \smallsetminus X} \subset \ddot{\frakh}^*$ of representatives of
equivalence classes in $G \smallsetminus X$. If for every
$\gamma \in I_{G \smallsetminus X}$ we define
$I_{[\gamma]} := \{ \gamma \} + \calH_r$, then
\[
 \Theta(\calC_{[\gamma]}) := \{ V_\mu \in \calC_{[\gamma]} \mid \mu \in I_{[\gamma]} \}
\]
is a set of representatives of $\PGr$-orbits of isomorphism classes of simple objects of $\calC_{[\gamma]}$. We have now everything in place to prove Theorem \ref{T:relative_modularity}.

\begin{figure}[hb]
  \centering
  \includegraphics{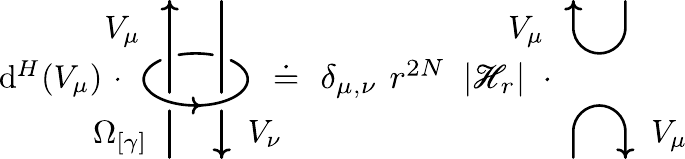}
  \caption{Relative modularity of $\calC^H$.}
  \label{F:cutting_ev-coev}
 \end{figure}

\begin{proof}[Proof of Theorem \ref{T:relative_modularity}]
 We know $\calC^H$ is a non-degenerate relative pre-modular category thanks to Theorem 7.2 of \cite{CGP14} and to Theorem 4 of \cite{GP18}. Therefore, we only need to prove that $\calC^H$ satisfies the relative modularity condition of Definition \ref{D:relative_modular}. We will do this by showing the skein equivalence of Figure \ref{F:cutting_ev-coev} for every $[\gamma] \in G \smallsetminus X$, for every $\mu \in I_{G \smallsetminus X}$, and for every $\nu \in I_{[\mu]}$, so let $f_{[\gamma],\mu,\nu}$ denote the morphism of $\calC^H$ obtained by applying the Reshetikhin-Turaev functor $F_{\calC^H}$ to the $\calC^H$-colored ribbon graph represented in the left hand part of Figure \ref{F:cutting_ev-coev}, ignoring the coefficient. Thanks to the handle slide property, we have the skein equivalence of Figure \ref{F:transparent_ev-coev},
 \begin{figure}[bt]
  \centering
  \includegraphics{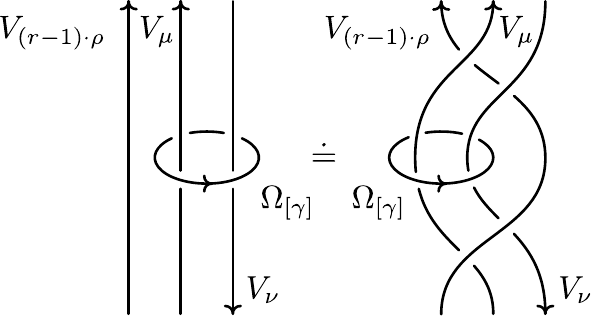}
  \caption{Transparency of $f_{[\gamma],\mu,\nu}$.}
  \label{F:transparent_ev-coev}
 \end{figure} 
 which means $f_{[\gamma],\mu,\nu}$ is transparent in $\calC^H_{[0]}$. Now, thanks to Lemma \ref{L:transparency}, we have
 \[
  f_{[\gamma],\mu,\nu} = \sum_{i=1}^m h_{[\gamma],\mu,\nu,i} \circ g_{[\gamma],\mu,\nu,i}
 \]
 with $g_{[\gamma],\mu,\nu,i} \in \Hom_{\calC^H}(V_{\mu} \otimes V_{\nu}^*,\sigma(\kappa_i))$, with $h_{[\gamma],\mu,\nu,i} \in \Hom_{\calC^H}(\sigma(\kappa_i),V_{\mu} \otimes V_{\nu}^*)$, and with $\kappa_i \in \PGr$ for every integer $1 \leq i \leq m$. But now $h_{[\gamma],\mu,\nu,i} = 0$ unless $\mu =\nu$ and $\kappa_i = 0$, because
 \[
  \Hom_{\calC^H} (V_{\mu} \otimes V_{\nu}^*,\sigma(\kappa_i)) \cong \Hom_{\calC^H} (V_{\mu},V_{\nu} \otimes \sigma(\kappa_i))
 \]
 and because $\calH_r$ is a set of representatives of equivalence classes in $\Lambda_R / \PGr$. This means that $f_{[\gamma],\mu,\mu}$ factors through the tensor unit $\one$. But now, since $V_{\mu}$ is simple, both $\Hom_{\calC^H} (V_{\mu} \otimes V_{\mu}^*,\one)$ and $\Hom_{\calC^H} (\one, V_{\mu} \otimes V_{\mu}^*)$ are 1-dimensional. This means $f_{[\gamma],\mu,\mu}$ is a scalar multiple of $\lcoev_{V_{\mu}} \circ \rev_{V_{\mu}}$. In order to compute the proportionality coefficient let us compare the m-traces of $\lcoev_{V_{\mu}} \circ \rev_{V_{\mu}}$ and of $f_{[\gamma],\mu,\mu}$. The first m-trace is easily seen to be
 \[
  \rmt^H_{V_{\mu} \otimes V_{\mu}^*}(\lcoev_{V_{\mu}} \circ \rev_{V_{\mu}}) = \rmt^H_{V_{\mu}}(\id_{V_{\mu}}) = \rmd^H(V_{\mu}).
 \]
 Indeed, this follows immediately from the partial trace property of $\rmt^H$. On the other hand, if
 \[
  f_{[\gamma],\mu,\mu} = \sum_{\nu \in \{ \gamma \} + \calH_r} \rmd^H(V_{\nu}) \cdot f_{\nu,\mu,\mu},
 \]
 where $f_{\nu,\mu,\mu}$ is obtained from $f_{[\gamma],\mu,\mu}$ by replacing the label $\Omega_{[\gamma]}$ of the meridian with $V_{\nu}$, then, the second m-trace is given by
 \begin{align*}
  \rmt^H_{V_{\mu} \otimes V_{\mu}^*}(f_{[\gamma],\mu,\mu}) 
  &= \sum_{\nu \in \{ \gamma \} + \calH_r} \rmd^H(V_{\nu}) \rmt^H_{V_{\mu} \otimes V_{\mu}^*}(f_{\nu,\mu,\mu}) \\
  &= \sum_{\nu \in \{ \gamma \} + \calH_r} \rmd^H(V_{\nu}) \rmt^H_{V_{\nu}}(f_{\mu,\nu}^- \circ f_{\mu,\nu}^+) \\
  &= \sum_{\nu \in \{ \gamma \} + \calH_r} \rmd^H(V_{\nu}) \rmd^H(V_{\nu})^{-1} \rmt^H_{V_{\nu}}(f_{\mu,\nu}^-) \rmt^H_{V_{\nu}}(f_{\mu,\nu}^+) \\
  &= r^{2N} \left| \calH_r \right|,
 \end{align*}
 where the morphisms $f_{\mu,\nu}^-$ and $f_{\mu,\nu}^+$ are represented in Figure \ref{F:long_Hopf_links}. Indeed, the second equality follows from both the cyclicity and the partial trace properties of $\rmt^H$, using isotopy, the third equality follows from the fact that $V_\nu$ is simple, and the fourth equality follows from the very last equation of Subsection \ref{Subs:unrolled_quantum_groups}.
\end{proof}

\begin{corollary}
 The CGP invariant $\rmN_{\calC^H}$ extends to a $\PGr$-graded TQFT
 \[
  \bbV_{\calC^H}^\PGr : \adCob_{\calC^H}^G \rightarrow \Vect_{\C}^{\PGr}.
 \]
\end{corollary}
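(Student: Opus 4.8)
The plan is to obtain this as a direct application of the general machinery recalled in Subsection~\ref{Subs:CGP_TQFTs}. Theorem~\ref{T:relative_modularity} has just established that $\calC^H$ is a relative modular $G$-category, with structure group $G = \frakh^*/\Lambda_R$, periodicity group $\PGr = \Lambda_R \cap (r \cdot \Lambda_W)$, and critical set $X$. Since $\calC^H$ is relative modular, the relative modularity parameter $\zeta_\Omega \in \C^*$ produced in that proof satisfies $\Delta_{-\Omega}\Delta_{+\Omega} = \zeta_\Omega \neq 0$, so $\calC^H$ is in particular non-degenerate, and all the hypotheses of Section~6.2 of \cite{D17} are in place: $\calC^H$ is a ribbon linear category over $\C$ carrying a compatible $G$-structure through the homogeneous subcategories $\calC^H_{[\gamma]}$, a free realization $\sigma : \PGr \to \calC^H_{[0]}$, and a non-zero m-trace $\rmt^H$ on $\Proj(\calC^H)$.

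Concretely, I would first fix a square root $\calD_\Omega \in \C$ of $\Delta_{-\Omega}\Delta_{+\Omega}$ — possible since $\C$ is algebraically closed — which pins down the normalization of $\rmN_{\calC^H}$ and its extension to closed morphisms of $\adCob_{\calC^H}^G$ via the signature defect, namely $\rmN_{\calC^H}(M,T,\omega,n) = \delta_\Omega^n \rmN_{\calC^H}(M,T,\omega)$ with $\delta_\Omega = \calD_\Omega/\Delta_{-\Omega}$. I would then invoke the $\PGr$-graded refinement of the universal construction of \cite{BHMV95} carried out in Section~6.2 of \cite{D17}: specialized to $\calC = \calC^H$ and $\Bbbk = \C$, it yields the desired functor $\bbV_{\calC^H}^\PGr : \adCob_{\calC^H}^G \to \Vect_\C^\PGr$, whose invariant of closed morphisms recovers $\rmN_{\calC^H}$. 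Here $\adCob_{\calC^H}^G$ is by definition the instance of $\adCob_{\calC}^G$ for $\calC = \calC^H$, and the finite-dimensionality of the resulting state spaces — needed for the universal construction to produce honest vector spaces — is guaranteed by the adaptation of the proof of Proposition~4.5 of \cite{BCGP16} recalled above.

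There is essentially no obstacle here beyond what was already overcome in Theorem~\ref{T:relative_modularity}: the corollary is a formal consequence of that theorem together with the results of \cite{D17}, and the only points worth recording are the bookkeeping ones, namely the existence of the square root $\calD_\Omega$ in $\C$ and the identification of $\adCob_{\calC^H}^G$ with the general construction of \cite{D17} applied to $\calC = \calC^H$.
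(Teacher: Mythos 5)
Your proposal is correct and follows exactly the route the paper intends: the corollary is stated without a separate proof precisely because it is an immediate application of Section~6.2 of \cite{D17} once Theorem~\ref{T:relative_modularity} is in place, which is what you spell out. The only bookkeeping points you record (existence of the square root $\calD_\Omega$ over $\C$, identification of $\adCob_{\calC^H}^G$ with the general construction specialized to $\calC = \calC^H$, and finite-dimensionality of state spaces via the adaptation of Proposition~4.5 of \cite{BCGP16}) are indeed the ones that need to be checked, and they hold.
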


\subsection{Projective generators and forgetful functor}\label{Subs:projective_generators}

In this subsection we prove some key technical results which will be later used for the proof Theorem \ref{T:main_result}. We say an object $P$ of $\bar{\calC}$ is a \textit{projective generator of $\bar{\calC}$} if for every object $V$ of $\Proj(\bar{\calC})$ there exist some integer $m$ and some morphisms $f_i \in \Hom_{\bar{\calC}}(V,P)$ and $g_i \in \Hom_{\bar{\calC}}(P,V)$ for every integer $1 \leqslant i \leqslant m$ such that
\[
 \id_V = \sum_{i=1}^m g_i \circ f_i.
\]
Remark that a natural choice for a projective generator of $\bar{\calC}$ is the regular representation $\bar{U}$ of $\bar{U}_q \frakg$. Analogously, we say an object $P$ of $\calC^H_{[0]}$ is a \textit{relative projective generator of $\calC^H_{[0]}$} if for every object $V$ of $\Proj(\calC^H_{[0]})$ there exist some integer $m$, some $\kappa_i \in \PGr$, and some morphisms $f_i \in \Hom_{\calC^H}(V,P \otimes \sigma(\kappa_i))$ and $g_i \in \Hom_{\calC^H}(P \otimes \sigma(\kappa_i),V)$ for every integer $1 \leqslant i \leqslant m$ such that
\[
 \id_V = \sum_{i=1}^m g_i \circ f_i.
\]
Next, we apply the following result to $\calC^H$.

\begin{proposition}\label{P:IndecProj}
 Let $\cat$ be an abelian relative pre-modular $G$-category over an algebraically closed field $\kk$. Then $\cat$ has enough projectives, and every indecomposable projective object is a projective cover of a simple object.
\end{proposition}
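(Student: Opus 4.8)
The plan is to treat the two assertions separately: enough projectives will come from combining the generic semisimplicity axiom with the fact that $\Proj(\cat)$ is a tensor ideal, and the statement about indecomposable projectives will then be a standard consequence of $\cat$ being a Hom-finite Krull--Schmidt abelian category with enough projectives in which every object has finite length. For the first part, note that $G\smallsetminus X\neq\varnothing$ since $X$ is small, so by generic semisimplicity there is an element $g\in G\smallsetminus X$ and a simple projective object $V\in\Theta(\cat_g)$ whose left evaluation $\lev_V\colon V^*\otimes V\to\one$ is an epimorphism. Since $V$ is projective and $\Proj(\cat)$ is closed under retracts and absorbent under tensor products, its dual $V^*$ is again projective --- it is a retract of $V^*\otimes V\otimes V^*\in\Proj(\cat)$ by the zig-zag identity --- hence $V^*\otimes V\in\Proj(\cat)$, and therefore $(V^*\otimes V)\otimes W\in\Proj(\cat)$ for every object $W$ of $\cat$. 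As $\cat$ is rigid, the functor $-\otimes W$ is exact, so tensoring $\lev_V$ with $\id_W$ produces an epimorphism $(V^*\otimes V)\otimes W\twoheadrightarrow\one\otimes W\cong W$ out of a projective object. Hence $\cat$ has enough projectives.

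For the second assertion, recall that $\cat$ is a Hom-finite abelian $\kk$-linear category all of whose objects have finite length, so it is Krull--Schmidt. Let $P$ be an indecomposable projective object. Having finite length, $P$ has finitely many maximal proper subobjects, and a standard Nakayama-type argument shows their intersection $\operatorname{rad}(P)$ is a superfluous subobject of $P$; consequently the canonical epimorphism $P\twoheadrightarrow S:=P/\operatorname{rad}(P)$ onto the nonzero semisimple object $S$ is essential, which exactly says that $P$ is a projective cover of $S$. Since the projective cover of a finite direct sum is the direct sum of the projective covers and $P$ is indecomposable, $S$ must be indecomposable, hence---being semisimple---simple. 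Thus every indecomposable projective object of $\cat$ is the projective cover of a simple object. (Conversely, enough projectives together with this discussion shows that every simple object of $\cat$ admits a projective cover: take an indecomposable summand of a projective surjecting onto it.)

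The main obstacle is the first part. The category $\cat$ need not contain a genuine projective generator, so the key realization is that a single generic simple projective object $V$ with epic evaluation already suffices to build, by tensoring with $W$, a projective surjection onto an arbitrary object $W$; this is precisely the place where the generic semisimplicity axiom (to produce such a $V$) and the ideal property of $\Proj(\cat)$ (to keep $(V^*\otimes V)\otimes W$ projective) are both indispensable. Once enough projectives are available, the description of indecomposable projectives is routine homological algebra in a finite-length Krull--Schmidt abelian category.
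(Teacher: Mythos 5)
Your first paragraph, producing a projective cover $\pp{V^*\otimes V}\otimes W\twoheadrightarrow W$ from a generic simple $V$ with epic evaluation, is essentially the paper's argument verbatim (the paper uses $\id_W\otimes\rev_V$, you use $\lev_V\otimes\id_W$; same idea).

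The second half takes a genuinely different route, and this is where the gap lies. You open with ``recall that $\cat$ is a Hom-finite abelian $\kk$-linear category all of whose objects have finite length, so it is Krull--Schmidt'' and then run a standard radical/Nakayama argument. But neither Hom-finiteness nor finite length of objects appears anywhere in the hypotheses: the proposition only assumes $\cat$ is an \emph{abelian} relative pre-modular $G$-category over an algebraically closed field, and the definition of a relative pre-modular category asks merely for a ribbon $\kk$-linear category with a grading, a free realization, generic semisimplicity, and a non-zero m-trace. There is no a priori finiteness condition on Hom-spaces or lengths in a non-generic block $\cat_h$. Everything downstream in your argument (Krull--Schmidt, existence and superfluity of the radical) rests on that unwarranted ``recall.'' The paper instead manufactures exactly the finiteness it needs from the axioms: it tensors the indecomposable projective $P\in\cat_h$ with a simple object $V\in\cat_g$ chosen so that $g+h\in G\smallsetminus X$, so $P\otimes V$ lies in a semisimple block; the faithfulness of $-\otimes V$ then embeds $\End_\cat(P)$ into the finite-dimensional algebra $\End_\cat(P\otimes V)$, Fitting's Lemma applies to $\End_\cat(P)$, and a short contradiction argument (two distinct maximal subobjects would force $P\otimes V$ to be a proper subobject of itself in a semisimple category) gives the unique maximal subobject. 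That use of the generic grading to control $\End_\cat(P)$ is the substantive idea your proof misses.

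A smaller slip: even granting finite length, it is not true that a finite-length object has only finitely many maximal proper subobjects (think of $\kk^2$ in $\Vect_\kk$ over an infinite field). The intersection defining $\operatorname{rad}(P)$ is still well-defined because it stabilizes, but not because there are finitely many terms.
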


\begin{proof}
 Choose some $g \in G \smallsetminus X$ and some simple object $V \in \calC_g$. By definition $\cat_g$ is semisimple, and so $V$ is projective and has epic evaluation. Then for every $W \in \cat$ the morphism
 \[
  \id_{W} \otimes \rev_{V} \in \Hom_{\cat}(W \otimes V \otimes V^*,W).
 \]
 is an epimorphism from a projective object to $W$. Thus $\cat$ has enough projectives. Next, for the second statement, let $P$ be an indecomposable projective object of $\cat_h$ for some $h \in G$. Since $X$ is small in $G$, there exists some $g \in G$ satisfying $g+h \in G \smallsetminus X$. Then, let $V$ be a simple object of $\cat_g$. The algebra $\End_\cat(P)$ embeds into $\End_\cat (P \otimes V) \subset \Mat_N(\kk)$ with $N$ bounded by the number of simple summands of $P \otimes V$. Then 
 Fitting's Lemma applies to $\End_\cat(P)$, i.e. its elements are either nilpotent or  isomorphisms. We can now prove that $P$ has a unique maximal proper subobject. To see this, let us suppose by contradiction $M_1$ and $M_2$ are distinct maximal proper subobjects. Then we have an epimorphism $p \in \Hom_{\calC^H}(M_1 \oplus M_2,P)$ which admits a section $(f_1,f_2) \in \Hom_{\calC^H}(P,M_1 \oplus M_2)$, because $P$ is projective. If $i_j \in \Hom_{\calC^H}(M_j,P)$ denotes the inclusion and $\tilde{f}_j := i_j \circ f_j$, then $\tilde{f}_1 + \tilde{f}_2 = \id_P$. Since their sum is the identity of $P$, they commute, and $\tilde{f}_1$ and $\tilde{f}_2$ cannot be simultaneously nilpotent. Thanks to Fitting's Lemma, $\tilde{f}_j$ is an isomorphism for some $j$. This implies that $P$ is a retract of $M_j$, and thus $P$ is a proper subobject of $P$. Then $P \otimes V$ is a proper subobject of itself, but $P \otimes V$ is also semisimple, which is a contradiction. 
\end{proof}

We will denote by $P_\mu$ a projective cover of $V_\mu$ for any $\mu \in \frakh^*$.
Thanks to Proposition \ref{P:IndecProj}, if $\calH_r$ denotes the set of representatives of equivalence classes in $\Lambda_R / \PGr$ of Figure \ref{F:cutting_ev-coev}, then
\[
 \bfP := \bigoplus_{\mathclap{\mu \in \calH_r}} P_{\mu}
\]
is by construction a relative projective generator of $\calC^H_{[0]}$.

\begin{lemma}\label{L:projective_generator}
 $\dim_{\C} \left( \Hom_{\calC^H}(\bfP,\sigma(\kappa)) \right) = \dim_{\C} \left( \Hom_{\calC^H}(\sigma(\kappa),\bfP) \right) = \delta_{0,\kappa}$ for every $\kappa \in \PGr$.
\end{lemma}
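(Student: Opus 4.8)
The plan is to reduce the computation to understanding homomorphisms between the projective covers $P_\mu$ and the invertible objects $\sigma(\kappa)$, and then to use the fact that $P_\mu$ is a projective cover of the simple object $V_\mu$. First I would observe that for $V \in \calC^H_{[0]}$ projective and $\kappa \in \PGr$, the object $\sigma(\kappa)$ is simple (it is one-dimensional with a nontrivial action only of the $H_i$), so $\Hom_{\calC^H}(P_\mu, \sigma(\kappa))$ can be nonzero only if $\sigma(\kappa)$ is a quotient of $P_\mu$; since the unique simple quotient (head) of the projective cover $P_\mu$ is $V_\mu$, this forces $V_\mu \cong \sigma(\kappa)$, hence $\mu = 0$ (as $\{0\}$ is one of the chosen representatives in $\calH_r$ and $V_0 = \sigma(0) = \one$) and $\kappa = 0$. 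In that case $\Hom_{\calC^H}(P_0, \sigma(0)) = \Hom_{\calC^H}(P_0, \one)$ is one-dimensional, because $P_0$ is the projective cover of $\one$ and $\End_{\calC^H}(\one) = \C$ (so the head $\one$ appears with multiplicity one in $P_0$, and $\Hom$ into a simple object computes exactly the multiplicity of that simple in the head). Summing over $\mu \in \calH_r$ then gives $\dim_\C \Hom_{\calC^H}(\bfP, \sigma(\kappa)) = \delta_{0,\kappa}$.

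For the second equality I would run the dual argument: $\Hom_{\calC^H}(\sigma(\kappa), P_\mu)$ is nonzero only if $\sigma(\kappa)$ embeds in $P_\mu$, i.e. $\sigma(\kappa)$ appears in the socle of $P_\mu$. Here I would use that $\calC^H$ (more precisely, $\calC^H_{[0]}$, which is the module category of a finite-dimensional algebra once one fixes the diagonalizable $H_i$-action — or alternatively one invokes that $\calC^H$ is unimodular, so projectives are also injective) has the property that the projective cover of $V_\mu$ is also the injective hull of $V_\mu$; this is standard for the module categories arising from the relevant (quasi-)Frobenius structure, and the paper has already set up that the relevant categories come from Hopf algebras with two-sided cointegrals. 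Then the socle of $P_\mu$ is $V_\mu$, so $\Hom_{\calC^H}(\sigma(\kappa), P_\mu) \neq 0$ forces $V_\mu \cong \sigma(\kappa)$, hence again $\mu = \kappa = 0$, and $\Hom_{\calC^H}(\one, P_0)$ is one-dimensional by the injective-hull statement applied to $\one$.

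Alternatively — and this may be cleaner given what is actually available in the excerpt — I would deduce the second equality from the first by duality: applying $(-)^*$ gives an isomorphism $\Hom_{\calC^H}(\sigma(\kappa), \bfP) \cong \Hom_{\calC^H}(\bfP^*, \sigma(\kappa)^*) = \Hom_{\calC^H}(\bfP^*, \sigma(-\kappa))$, using that $\sigma(\kappa)^* \cong \sigma(-\kappa)$ since $\sigma$ is a monoidal functor from the group $\PGr$. Since $\bfP$ is a direct sum of projective covers indexed by a set $\calH_r$ of representatives of $\Lambda_R/\PGr$, its dual $\bfP^*$ is a direct sum of projective covers $P_{-\mu}^{\vphantom{*}}$ (projectives are closed under duals, and the dual of an indecomposable projective is indecomposable projective, covering the simple $V_\mu^* \cong V_{2(r-1)\rho - \mu}$), so $\bfP^*$ is again, up to reindexing by $-\calH_r$ which is still a full set of representatives of $\Lambda_R/\PGr$, a relative projective generator of the same form. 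Applying the first equality to it yields $\dim_\C \Hom_{\calC^H}(\bfP^*, \sigma(-\kappa)) = \delta_{0,-\kappa} = \delta_{0,\kappa}$, as desired.

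The main obstacle is the input about heads and socles of the $P_\mu$: concluding that $\Hom$ from $P_\mu$ to a simple $S$ equals the multiplicity of $S$ in the head of $P_\mu$ is immediate, but one must be careful that $V_0 = \one$ is the \emph{unique} element of $\calH_r \cap \{\mu : V_\mu \cong \sigma(\kappa) \text{ for some } \kappa\}$, which hinges on two facts the paper has at hand: that $\sigma(\kappa)$ for $\kappa \neq 0$ lies in a $\PGr$-orbit distinct from that of $\one$ but still with underlying weight $\kappa \in \PGr \subset \Lambda_R$, so $V_\mu \cong \sigma(\kappa)$ would force $\mu \equiv \kappa \pmod{\PGr}$, i.e. $\mu \equiv 0$, hence $\mu = 0$ by the choice of $\calH_r$; and that $\sigma$ induces a free action, so $\sigma(\kappa) \cong \sigma(0) = \one$ iff $\kappa = 0$. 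With those bookkeeping points pinned down, the whole statement follows from standard projective-cover / Frobenius-duality arguments, all of which are available from the structure recalled earlier in the paper.
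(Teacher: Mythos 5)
Your argument for the first equality is exactly the paper's: $\Hom_{\calC^H}(P_\mu,\sigma(\kappa))\neq 0$ forces $\sigma(\kappa)$ to be the unique simple quotient $V_\mu$ of the projective cover $P_\mu$, whence $\mu=\kappa\in\calH_r\cap\PGr=\{0\}$, and the one-dimensionality at $\mu=\kappa=0$ is the multiplicity-one statement for the head. For the second equality, however, the key claim you lean on --- that $P_\mu$ is the injective hull of $V_\mu$, i.e.\ that the socle of $P_\mu$ is $V_\mu$ for \emph{every} $\mu$ --- is asserted with an inadequate justification. Attributing it to the ``(quasi-)Frobenius structure'' conflates two different things: Frobenius/quasi-Frobenius gives that projectives are injective, but it does \emph{not} trivialize the Nakayama permutation, so it does not by itself identify the socle of $P_\mu$ with its head. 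Triviality of that permutation requires unimodularity together with the pivotal identification $V^{**}\cong V$, and moreover the off-the-shelf finite-tensor-category statements (e.g.\ in \cite{EGNO15}) do not apply verbatim here since $U_q^H\frakg$ is infinite-dimensional and $\calC^H_{[0]}$ has infinitely many simples. The paper avoids this entirely by a more economical route: it deduces from Proposition \ref{P:IndecProj} and closure of projectives under duality that every indecomposable projective has a \emph{simple} socle, uses unimodularity of $\calC^H$ only to pin down that the socle of $P_0$ is $\one$, and then tensors with $\sigma(\kappa)$ to see that $P_0\otimes\sigma(\kappa)$ is the unique indecomposable projective whose socle is $\sigma(\kappa)$; since this is $P_\kappa$, it equals some $P_\mu$ with $\mu\in\calH_r$ only when $\kappa=0$. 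That weaker input suffices for the bookkeeping, and nothing about the socle of $P_\mu$ for general $\mu\in\calH_r$ is ever needed. Your alternative duality argument does not escape the same dependency, because identifying the head of $P_\mu^*$ with $V_\mu^*$ (so that $\bfP^*$ has the ``same form'' as $\bfP$) is equivalent to the socle identification for $P_\mu$. So the strategy is right and the ingredients are the right ones, but you should replace the loose Frobenius appeal with the paper's unimodularity-plus-tensoring argument, or else supply an actual proof that the Nakayama twist is trivial in this non-finite setting.
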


\begin{proof}
 Since the vector space of $U^H_q\frakg$-module morphisms from a projective indecomposable weight $U^H_q\frakg$-module to its unique simple quotient is 1-dimensional, and since $\{ \sigma(\kappa) \in \calC^H_{[0]} \mid \kappa \in \PGr \}$ is the $\PGr$-orbit of the tensor unit $\one$,
 we have
 \[
  \dim_{\C} \left( \Hom_{\calC^H}(\bfP,\sigma(\kappa)) \right) = \delta_{0,\kappa}.
 \]
 Furthermore, since the dual of an indecomposable projective weight $U^H_q\frakg$-module is also indecomposable and projective, see for instance Proposition
6.1.3 of \cite{EGNO15}, Proposition \ref{P:IndecProj} implies that each indecomposable projective weight $U^H_q\frakg$-module $P$ has an unique simple submodule which is dual to the simple quotient of $P^*$. Since $\calC^H$ is unimodular, see \cite{GP13} and Theorem 3.1.3 of \cite{GKP13}, $P_0$ is the unique indecomposable projective weight $U^H_q\frakg$-module that constains the tensor unit $\one = \sigma(0)$ as a submodule, and similarly $P_0 \otimes \sigma(\kappa)$ is the unique indecomposable projective weight $U^H_q\frakg$-module that contains $\sigma(\kappa)$ as a submodule. Hence we have
 \[
   \dim_{\C} \left( \Hom_{\calC^H}(\sigma(\kappa),\bfP) \right)= \delta_{0,\kappa}. \qedhere
 \]
\end{proof}

The following result establishes a cutting property for Kirby meridians which is analogous to Lemma 3.6 of \cite{DGP18}.

\begin{lemma}\label{L:cutting_bfP}
 There exist generators $\bfepsilon \in \Hom_{\calC^H}(\bfP,\one)$ and $\bfLambda \in \Hom_{\calC^H}(\one,\bfP)$ satisfying $\rmt^H_{\bfP}(\bfLambda \circ \bfepsilon) = 1$ which realize the skein equivalence of Figure \ref{F:cutting_bfP}.
\end{lemma}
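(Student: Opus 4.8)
The plan is to treat the statement in three parts: constructing the two morphisms, fixing the normalization $\rmt^H_{\bfP}(\bfLambda\circ\bfepsilon)=1$, and establishing the skein equivalence of Figure \ref{F:cutting_bfP}. For the first two, note that by Lemma \ref{L:projective_generator} both $\Hom_{\calC^H}(\bfP,\one)$ and $\Hom_{\calC^H}(\one,\bfP)$ are $1$-dimensional over $\C$, so up to scalars there is essentially no choice: $\bfepsilon$ is the composite of the projection $\bfP \twoheadrightarrow P_0$ onto the summand indexed by $0 \in \calH_r$ with the canonical surjection $P_0 \twoheadrightarrow \one$, while $\bfLambda$ is the composite of the inclusion $\one = \sigma(0) \hookrightarrow P_0$ onto the unique simple submodule of $P_0$ (which exists by unimodularity of $\calC^H$, exactly as in the proof of Lemma \ref{L:projective_generator}) with the inclusion $P_0 \hookrightarrow \bfP$. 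Since $\bfP$ is projective and $\rmt^H$ is non-degenerate, the pairing $\rmt^H_{\bfP}(\,\cdot\,\circ\,\cdot\,) : \Hom_{\calC^H}(\one,\bfP) \times \Hom_{\calC^H}(\bfP,\one) \to \C$ is non-degenerate; as both factors are $1$-dimensional this forces $\rmt^H_{\bfP}(\bfLambda\circ\bfepsilon) \neq 0$, and we rescale $\bfLambda$ so that this scalar equals $1$.

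The core of the argument is the skein equivalence of Figure \ref{F:cutting_bfP}, which after applying $F_{\calC^H}$ is an equality of two morphisms in a single $\Hom$-space, and I would reduce it to matching one scalar. The relevant configuration involves a $\bfP$-colored meridian carrying the coupon $\bfLambda\circ\bfepsilon$; since $\bfLambda\circ\bfepsilon$ factors through $\one$, the transparency mechanism of Lemma \ref{L:transparency} shows the $\bfP$-colored meridian can be slid freely, and the vanishing $\Hom_{\calC^H}(V_\mu\otimes V_\nu^*,\sigma(\kappa)) \cong \Hom_{\calC^H}(V_\mu,V_\nu\otimes\sigma(\kappa)) = 0$ unless $\mu=\nu$ and $\kappa=0$ — already used in the proof of Theorem \ref{T:relative_modularity} — forces everything to factor through $\one$. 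Using that $\bfP = \bigoplus_{\mu\in\calH_r} P_\mu$ is a relative projective generator of $\calC^H_{[0]}$, I would precompose with the inclusions $P_\mu \hookrightarrow \bfP$ and postcompose with the surjections $\bfP \twoheadrightarrow V_\mu$, so that the identity to be checked becomes the one established inside the proof of Theorem \ref{T:relative_modularity}, namely that $f_{[\gamma],\mu,\mu}$ is an explicit scalar multiple of $\lcoev_{V_\mu}\circ\rev_{V_\mu}$, together with the values of $\rmt^H_{V_\nu}(f^{\pm}_{\mu,\nu})$ and of the modified dimensions $\rmd^H(V_\mu)$ recorded in Subsection \ref{Subs:unrolled_quantum_groups}. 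The non-degenerate m-trace then pins down the surviving scalar, and the normalization $\rmt^H_{\bfP}(\bfLambda\circ\bfepsilon)=1$ chosen above is exactly what makes the coefficient in Figure \ref{F:cutting_bfP} come out as stated.

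The step I expect to be the main obstacle is precisely this reduction, for two reasons. First, $\calC^H_{[0]}$ is not semisimple — indeed $[0]\in X$ — so one cannot decompose $\bfP$ into simple objects and verify the relation componentwise; the factorization of $\bfLambda\circ\bfepsilon$ through $\one$, combined with the $1$-dimensionality statements of Lemma \ref{L:projective_generator}, is what replaces such a decomposition, since it shows that every endomorphism of $\bfP$ which factors through $\one$ is a scalar multiple of $\bfLambda\circ\bfepsilon$. Second, one must keep the two independent normalizations coherent — $\rmd^H(V_{(r-1)\cdot\rho})=1$ on the $\calC^H$ side and $\rmt^H_{\bfP}(\bfLambda\circ\bfepsilon)=1$ here — so that no spurious constant survives in the final skein relation; this last point is routine but essential bookkeeping.
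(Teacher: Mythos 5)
Your overall architecture matches the paper's: pick generators of the two one-dimensional Hom-spaces, normalize via non-degeneracy of $\rmt^H$, argue that the encircling morphism factors through $\one$ and is therefore a scalar multiple of $\bfLambda\circ\bfepsilon$, and pin down that scalar with the m-trace using ingredients from Theorem \ref{T:relative_modularity}. The explicit construction of $\bfepsilon$ and $\bfLambda$ is an unnecessary but harmless addition; the paper simply chooses generators. However, there are two genuine gaps in the middle of your argument.

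First, the transparency step is run in the wrong direction. What must be shown is that the morphism $h_{[\gamma]}$ obtained by encircling the $\bfP$-colored strand with the Kirby-colored ($\Omega_{[\gamma]}$-colored) meridian is transparent in $\calC^H_{[0]}$. That follows from the handle-slide property of Kirby colors (yielding Figure \ref{F:transparent_bfP}), and only then does Lemma \ref{L:transparency} apply, showing $h_{[\gamma]}$ factors through objects $\sigma(\kappa)$; combining with Lemma \ref{L:projective_generator} (which says $\Hom_{\calC^H}(\bfP,\sigma(\kappa))$ vanishes unless $\kappa=0$) forces $h_{[\gamma]}$ to factor through $\one$. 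You instead start from the fact that $\bfLambda\circ\bfepsilon$ factors through $\one$ and conclude a transparency statement — but that is the converse implication of Lemma \ref{L:transparency}, and it is about the wrong morphism. You also cite the vanishing $\Hom_{\calC^H}(V_\mu\otimes V_\nu^*,\sigma(\kappa))=0$, which is the one relevant in Theorem \ref{T:relative_modularity} for endomorphisms of $V_\mu\otimes V_\nu^*$; for $h_{[\gamma]}\in\End_{\calC^H}(\bfP)$ the relevant vanishing is Lemma \ref{L:projective_generator}. Without invoking handle-sliding, you have not shown $h_{[\gamma]}=\alpha\cdot\bfLambda\circ\bfepsilon$.

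Second, the proposed reduction ``precompose with $P_\mu\hookrightarrow\bfP$ and postcompose with $\bfP\twoheadrightarrow V_\mu$'' does not transfer the scalar $\alpha$ to the computation in Theorem \ref{T:relative_modularity}: $V_\mu$ is a quotient of $P_\mu$, not a summand of $\bfP$, and such pre/post-composition destroys the endomorphism structure you need to take an m-trace of. What the paper actually does is exploit the fact that $P_0$ is a direct summand of $V_\mu\otimes V_\mu^*$ with multiplicity one, defines $f_\mu\in\Hom_{\calC^H}(\bfP,V_\mu\otimes V_\mu^*)$ and $g_\mu\in\Hom_{\calC^H}(V_\mu\otimes V_\mu^*,\bfP)$ via the two $P_0$-summands, and uses the cyclicity of $\rmt^H$ to move the computation of $\rmt^H_{\bfP}(h_{[\gamma]})$ over to $\End_{\calC^H}(V_\mu\otimes V_\mu^*)$, where Figure \ref{F:cutting_ev-coev} and the partial-trace property give $r^{2N}|\calH_r|$ exactly. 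Your outline names the right inputs (the scalar multiple of $\lcoev\circ\rev$, the Hopf-link traces, the modified dimensions), but the mechanism connecting $\bfP$ to $V_\mu\otimes V_\mu^*$ is missing and the one you suggest does not work.
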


\begin{figure}[ht]
 \centering
 \includegraphics{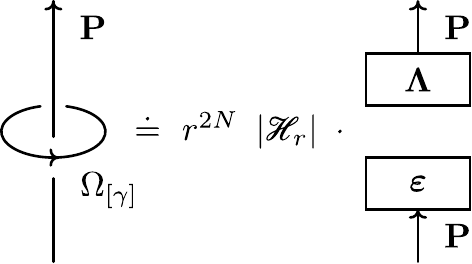}
 \caption{Cutting property for Kirby-colored meridians.}
 \label{F:cutting_bfP}
\end{figure}

\begin{proof}
 Thanks to Lemma \ref{L:projective_generator}, and thanks to the non-degeneracy of $\rmt^H$, the composition of a non-trivial morphism of $\Hom_{\calC^H}(\bfP,\one)$ with a non-trivial morphism of $\Hom_{\calC^H}(\one,\bfP)$ has non-zero m-trace. Therefore, let us fix a pair of generators $\bfepsilon \in \Hom_{\calC^H}(\bfP,\one)$ and $\bfLambda \in \Hom_{\calC^H}(\one,\bfP)$ satisfying $\rmt^H_{\bfP}(\bfLambda \circ \bfepsilon) = 1$, and let us prove they realize the skein equivalence of Figure \ref{F:cutting_bfP}. If $h_{[\gamma]}$ is the morphism of $\calC^H$ obtained by applying the Reshetikhin-Turaev functor $F_{\calC^H}$ to the $\calC^H$-colored ribbon graph represented in the left hand part of Figure \ref{F:cutting_bfP}, the handle slide property yields the skein equivalence represented in Figure \ref{F:transparent_bfP}.
 \begin{figure}[b]
  \centering
  \includegraphics{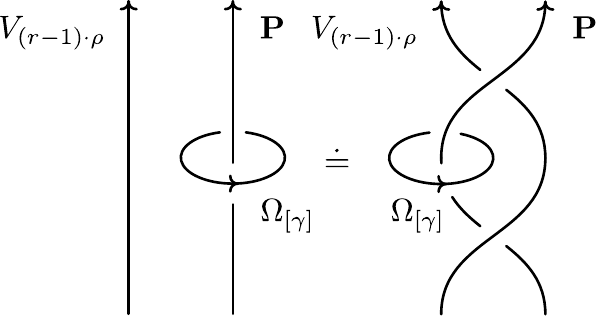}
  \caption{Transparency of $h_{[\gamma]}$.}
  \label{F:transparent_bfP}
 \end{figure}
 This means that, thanks to Lemmas \ref{L:transparency} and \ref{L:projective_generator}, the morphism $h_{[\gamma]}$ factors through the tensor unit $\one$. But then, since $\Hom_{\calC^H} (\bfP,\one)$ and $\Hom_{\calC^H} (\one,\bfP)$ are both 1-dimensional, we must have $h_{[\gamma]} = \alpha \cdot \bfLambda \circ \bfepsilon$ for some $\alpha \in \C$. Then, let us show $\alpha = r^{2N} \left| \calH_r \right|$. Proposition \ref{P:IndecProj} implies $\Hom_{\calC^H}(P_\mu,\one)$ is $\delta_{\mu,0}$-dimensional for every $\mu \in \calH_r$. 
 Moreover, for every $\mu \in \ddot{\frakh}^*$, the projective cover $P_0$ of $V_0 = \one$ is a direct summand of $V_\mu \otimes V_\mu^*$ of multiplicity 1. In other words, we have
 \[
  \id_{V_\mu \otimes V_\mu^*} = \sum_{i=0}^m \iota_{\mu,i} \circ \pi_{\mu,i}
 \]
 for some weights $\mu_1,\ldots,\mu_m \in \Lambda_R$ satisfying $\mu_0 = 0$ and $\mu_i \neq 0$ for all integers $1 \leq i \leq m$, and for some $U^H_q \frakg$-module morphisms $\pi_{\mu,i} \in \Hom_{\calC^H}(V_\mu \otimes V_\mu^*,P_{\mu_i})$ and $\iota_{\mu,i} \in \Hom_{\calC^H}(P_{\mu_i},V_\mu \otimes V_\mu^*)$. Remark that this implies
 \[
  \lcoev_{V_\mu} \circ \rev_{V_\mu} = \lcoev_{V_\mu} \circ \rev_{V_\mu} \circ \iota_{\mu,0} \circ \pi_{\mu,0}.
 \]
 Therefore, let us fix some $\mu \in I_{G \smallsetminus X}$, and let us consider the $U^H_q \frakg$-module morphisms $f_\mu := \iota_{\mu,0} \circ \pi_0 \in \Hom_{\calC^H}(\bfP,V_\mu \otimes V_\mu^*)$ and $g_\mu := \iota_0 \circ \pi_{\mu,0} \in \Hom_{\calC^H}(V_\mu \otimes V_\mu^*,\bfP)$ determined by the projection $\pi_0 \in \Hom_{\calC^H}(\bfP,P_0)$ and the injection $\iota_0 \in \Hom_{\calC^H}(P_0,\bfP)$. Thanks to Figure \ref{F:cutting_ev-coev}, we have the skein equivalence of Figure \ref{F:cutting_black}.
 \begin{figure}[t]
  \centering
  \includegraphics{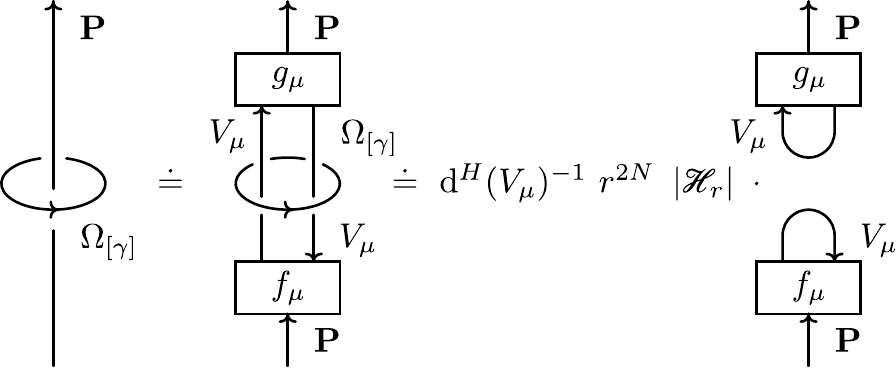}
  \caption{Cutting a $\bfP$-colored edge with a Kirby-colored meridian.}
  \label{F:cutting_black}
 \end{figure}
 Then, let us compute the m-trace of $h_{[\gamma]}$ in two different ways. On one hand, we have
 \begin{align*}
  \rmt^H_\bfP(h_{[\gamma]}) &= \alpha \rmt^H_\bfP(\bfLambda \circ \bfepsilon) = \alpha.
 \end{align*}
 On the other hand, we have
 \begin{align*}
  \rmt^H_\bfP(h_{[\gamma]}) &= \rmd^H(V_\mu)^{-1} r^{2N} \left| \calH_r \right| \rmt^H_\bfP( g_\mu \circ \lcoev_{V_\mu} \circ \rev_{V_\mu} \circ f_\mu) \\
  &= \rmd^H(V_\mu)^{-1} r^{2N} \left| \calH_r \right| \rmt^H_{V_\mu \otimes V_\mu^*}(\lcoev_{V_\mu} \circ \rev_{V_\mu} \circ f_\mu \circ g_\mu) \\
  &= \rmd^H(V_\mu)^{-1} r^{2N} \left| \calH_r \right| \rmt^H_{V_\mu \otimes V_\mu^*}(\lcoev_{V_\mu} \circ \rev_{V_\mu} \circ \iota_{\mu,0} \circ \pi_{\mu,0}) \\
  &= \rmd^H(V_\mu)^{-1} r^{2N} \left| \calH_r \right| \rmt^H_{V_\mu \otimes V_\mu^*}(\lcoev_{V_\mu} \circ \rev_{V_\mu}) \\
  &= \rmd^H(V_\mu)^{-1} r^{2N} \left| \calH_r \right| \rmt^H_{V_\mu}(\id_{V_\mu}) \\
  &= r^{2N} \left| \calH_r \right|,
 \end{align*}
 where the second and fifth equalities follow from the cyclicity and the partial trace properties of $\rmt^H$ respectively. 
\end{proof}

Let us consider the forgetful functor $\Phi_{\calC} : \calC^H_{[0]} \rightarrow \bar{\calC}$ which forgets the action of $H_i$ for all integers $1 \leqslant i \leqslant n$. If $V$ is an object of $\calC^H_{[0]}$ we denote with $\bar{V}$ its image under $\Phi_{\calC}$, and if $f$ is a morphism of $\calC^H_{[0]}$ we denote with $\bar{f}$ its image under $\Phi_{\calC}$. This induces a ribbon functor $\Phi_{\calR} : \calR_{\calC^H_{[0]}} \rightarrow \calR_{\bar{\calC}}$ from the category of $\calC^H_{[0]}$-colored ribbon graphs to the category of $\bar{\calC}$-colored ribbon graphs. If $T$ is a morphism of $\calR_{\calC^H_{[0]}}$ we denote with $\bar{T}$ its image under $\Phi_{\calR}$.

\begin{lemma}\label{L:forgetful_functors}
 The forgetful functor $\Phi_{\calC} : \calC^H_{[0]} \rightarrow \bar{\calC}$ is ribbon, and it satisfies
 \[
  \Phi_{\calC} \circ F_{\calC^H_{[0]}} = F_{\bar{\calC}} \circ \Phi_{\calR}.
 \]
\end{lemma}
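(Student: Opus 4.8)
The plan is first to check that $\Phi_{\calC}$ preserves every piece of the ribbon structure, and then to deduce the identity relating the two Reshetikhin--Turaev functors from the naturality of the Reshetikhin--Turaev construction with respect to ribbon functors.

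To begin, I would verify that $\Phi_{\calC}$ is well defined, i.e.\ that erasing the action of the generators $H_i$ on an object $V$ of $\calC^H_{[0]}$ yields a genuine $\bar{U}_q \frakg$-module. The only defining relation of $\bar{U}_q \frakg$ not already present in $U^H_q \frakg$ is $K_\mu = 1$ for $\mu \in \Lambda_R \cap r \cdot \Lambda_W$; but on a weight vector of $V$ of weight $\nu \in \Lambda_R$ the element $K_\mu$ acts by the scalar $q^{\langle \mu,\nu \rangle}$, and $\langle \mu,\nu \rangle \in r\Z$ whenever $\mu \in r \cdot \Lambda_W$ and $\nu \in \Lambda_R$ by integrality of the pairing between $\Lambda_W$ and $\Lambda_R$, so this scalar is $1$. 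Monoidality of $\Phi_{\calC}$ is then immediate and strict: the coproduct, counit and antipode of $\bar{U}_q \frakg$ are inherited from $\calU_q \frakg$ exactly as those of $U^H_q \frakg$, and since $H_i$ is primitive, erasing it is compatible with tensor products and with the counit.

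Next I would compare the remaining structure on weight vectors. For the pivotal structure, the pivotal element $K_{2 \cdot \rho}^{1-r}$ of $U^H_q \frakg$ differs from the pivotal element $K_{2 \cdot \rho}$ of $\bar{U}_q \frakg$ by the factor $K_{2 \cdot \rho}^{-r} = K_{-2r \cdot \rho}$, and since $2 \cdot \rho = \sum_{\alpha \in \Phi_+} \alpha \in \Lambda_R$ we have $2r \cdot \rho \in r \cdot \Lambda_R \subseteq \Lambda_R \cap r \cdot \Lambda_W$, so $K_{-2r \cdot \rho}$ acts as the identity on every object of $\calC^H_{[0]}$ and on its image in $\bar{\calC}$; hence the left and right (co)evaluation morphisms, which are built from the pivotal element by the formulas recalled in Subsection \ref{Subs:Hennings_invariants}, are carried to one another by $\Phi_{\calC}$. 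For the braiding, the nilpotent factor is literally the same element, $\Theta^H = \bar{\Theta}$, so it only remains to check that $\bar{R}_0$ acts on a product $v \otimes v'$ of weight vectors of weights $\nu,\nu' \in \Lambda_R$ by the scalar $q^{\langle \nu,\nu' \rangle}$ --- which is exactly the action of the map $R^H_{0,V,V'}$ used to define $c_{V,V'}$ on $\calC^H$. This is a Gauss-sum computation: expanding $\bar{R}_0$ and using $K_\mu v = q^{\langle \mu,\nu \rangle} v$ and $K_{\mu'} v' = q^{\langle \mu',\nu' \rangle} v'$, the inner sum over $\mu' \in \Lambda_R / (\Lambda_R \cap r \cdot \Lambda_W)$ of $q^{\langle \mu',\nu'-\mu \rangle}$ vanishes unless $\mu \equiv \nu'$ modulo $\Lambda_R \cap r \cdot \Lambda_W$ and otherwise equals $\left| \Lambda_R / (\Lambda_R \cap r \cdot \Lambda_W) \right|$, leaving exactly $q^{\langle \nu',\nu \rangle} = q^{\langle \nu,\nu' \rangle}$. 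Finally, the ribbon twist on each side is implemented by the respective ribbon element, which is built out of the $R$-matrix and the pivotal element; since both of these are matched by $\Phi_{\calC}$, so is the twist.

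With $\Phi_{\calC}$ shown to be a ribbon functor, the equality $\Phi_{\calC} \circ F_{\calC^H_{[0]}} = F_{\bar{\calC}} \circ \Phi_{\calR}$ becomes formal: the Reshetikhin--Turaev functor of a ribbon category is assembled purely out of its braiding, twist and duality morphisms, so applying the ribbon functor $\Phi_{\calC}$ to $F_{\calC^H_{[0]}}(T)$ performs the very same graphical recipe on the relabelled ribbon graph $\bar{T} = \Phi_{\calR}(T)$, and the result is $F_{\bar{\calC}}(\bar{T})$ by definition; equivalently, $\Phi_{\calC} \circ F_{\calC^H_{[0]}}$ and $F_{\bar{\calC}} \circ \Phi_{\calR}$ are two ribbon functors $\calR_{\calC^H_{[0]}} \to \bar{\calC}$ which agree on colored points and on colored coupons, hence coincide. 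I expect the main obstacle to be the braiding comparison, and specifically the finite Fourier / Gauss-sum identity that identifies the ``integrated'' diagonal $R$-matrix $q^{\langle \nu,\nu' \rangle}$ on integral weight modules with Lusztig's sum $\bar{R}_0$; once that is in place, the rest is bookkeeping with the Hopf-algebra structure maps.
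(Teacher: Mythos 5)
Your proof is correct and follows essentially the same route as the paper: you verify that $K_{2\cdot\rho}^{1-r}$ and $K_{2\cdot\rho}$ act identically on $\Lambda_R$-weight modules and reduce the braiding comparison to the identity $R^H_{0,V,V'} = (\rho_V \otimes \rho_{V'})(\bar{R}_0)$, established by the same finite orthogonality-of-characters (Gauss sum) computation over $\Lambda_R / (\Lambda_R \cap r\cdot\Lambda_W)$. The extra preliminary checks you spell out (well-definedness of $\Phi_{\calC}$, the twist, and the formal consequence for the Reshetikhin--Turaev functors) are all correct and are implicit in the paper's shorter argument.
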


\begin{proof}
 First of all, remark that $K_{2 \cdot \rho}^{1-r} = K_{2 \cdot \rho}$ in $\bar{U}_q \frakg$. Then the result follows immediately from the equality 
 \[
  R^H_{0,V,V'} = (\rho_V  \otimes \rho_{V'})(\bar{R}_0)
 \]
 for all $V, V' \in \calC^H_{[0]}$, where $R^H_{0,V,V'} : V \otimes V' \rightarrow V \otimes V'$ is defined in Subsection \ref{Subs:unrolled_quantum_groups}, and where $\bar{R}_0 \in \bar{U}_q \frakh \otimes \bar{U}_q \frakh$ is defined in Subsection \ref{Subs:small_quantum_groups}. To show the claim, remark
 \[
  \sum_{\mu' \in \Lambda_R / Z} q^{\langle \mu,\mu' \rangle} = \left| \Lambda_R / Z \right| \delta_{\mu,0}
 \]
 for every $\mu \in \Lambda_R$. This means that
 \begin{align*}
  (\rho_V \otimes \rho_{V'})(\bar{R}_0)(v \otimes v')
  &= \frac{1}{\left| \Lambda_R / Z \right|} \cdot \sum_{\smash{\mu,\mu' \in \Lambda_R / Z}} q^{- \langle \mu,\mu' \rangle} \cdot \rho_V(K_{\mu})(v) \otimes \rho_{V'}(K_{\mu'})(v') \\
  &= \frac{1}{\left| \Lambda_R / Z \right|} \cdot  \sum_{\smash{\mu,\mu' \in \Lambda_R / Z}} q^{\langle \mu,\nu \rangle + \langle \mu',\nu' \rangle - \langle \mu,\mu' \rangle} \cdot v \otimes v' \\
  &= \sum_{\mu \in \Lambda_R / Z} \delta_{\mu,\nu'} q^{\langle \mu,\nu \rangle} \cdot v \otimes v' \\
  &= q^{\langle \nu,\nu' \rangle} \cdot v \otimes v'
 \end{align*}
 for all $v \in V$, $v' \in V'$ satisfying
 \[
  \rho_V(H_i)(v) = \nu(H_i) \cdot v, \quad \rho_{V'}(H_i)(v') = \nu'(H_i) \cdot v'
 \]
 for every integer $1 \leqslant i \leqslant n$.
\end{proof}

The forgetful functor $\Phi_{\calC}$ preserves the property of being projective.

\begin{lemma}\label{L:projectives}
 If $P$ is a projective object of $\calC^H_{[0]}$, then $\bar{P}$ is a projective object of $\bar{\calC}$.
\end{lemma}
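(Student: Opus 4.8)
The plan is to reduce the statement, by purely formal manipulations, to the projectivity of one single well-chosen object of $\bar{\calC}$, and then to invoke a classical fact about small quantum groups for that object. Throughout I would use two things: that $\Phi_{\calC}$ is a monoidal (indeed ribbon) functor by Lemma~\ref{L:forgetful_functors}, and that $\Proj(\bar{\calC})$ is an ideal of $\bar{\calC}$, hence closed under retracts and absorbent under tensor products with arbitrary objects.

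\emph{First reduction.} Recall that $\bfP = \bigoplus_{\mu \in \calH_r} P_{\mu}$ is a relative projective generator of $\calC^H_{[0]}$, so every projective object $P$ of $\calC^H_{[0]}$ is a retract of a finite direct sum $\bigoplus_i \bfP \otimes \sigma(\kappa_i)$ with $\kappa_i \in \PGr$. Now $\overline{\sigma(\kappa)} \cong \one$ in $\bar{\calC}$ for every $\kappa \in \PGr$: on $\sigma(\kappa)$ the generators $E_i,F_i$ act by zero while $K_i$ acts by $q_i^{\kappa(H_i)}$, and since $\kappa \in \PGr \subset r \cdot \Lambda_W$ we have $\kappa(H_i) \in r\Z$, so $q_i^{\kappa(H_i)} = 1$ because $q_i^r = 1$ (the same coprimality of $r$ with the $d_i$ that was used in Lemma~\ref{L:transparency}). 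Applying the monoidal functor $\Phi_{\calC}$ to the retraction data thus exhibits $\bar{P}$ as a retract of $\overline{\bfP}^{\oplus m}$ for some $m$. Hence it suffices to prove that $\overline{\bfP}$ — equivalently, each $\overline{P_{\mu}}$ with $\mu \in \calH_r$ — is projective in $\bar{\calC}$.

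\emph{Second reduction.} Next I would note that for every $\mu \in \calH_r$ the indecomposable projective $P_{\mu}$ is a direct summand of $V_{(r-1) \cdot \rho} \otimes V_{(r-1) \cdot \rho}^* \otimes V_{\mu}$. Indeed $V_{(r-1) \cdot \rho}$ is typical, hence projective, so this object lies in $\Proj(\calC^H) \cap \calC^H_{[0]}$; moreover $\rev_{V_{(r-1) \cdot \rho}}$ is epic (the evaluation of $V_{(r-1) \cdot \rho}$ is epic, since $P_0$ is a direct summand of $V_{(r-1) \cdot \rho} \otimes V_{(r-1) \cdot \rho}^*$ and $P_0 \twoheadrightarrow \one$, as recalled in the proof of Lemma~\ref{L:cutting_bfP}), so tensoring with $\id_{V_{\mu}}$ yields an epimorphism from a projective object onto the simple object $V_{\mu}$, of which its projective cover $P_{\mu}$ is then a summand. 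Applying $\Phi_{\calC}$, the object $\overline{P_{\mu}}$ is a retract of $\overline{V_{(r-1) \cdot \rho}} \otimes \overline{V_{(r-1) \cdot \rho}}^* \otimes \overline{V_{\mu}}$; since $\Proj(\bar{\calC})$ is a tensor ideal, this last object — hence its retract $\overline{P_{\mu}}$, hence also $\overline{\bfP}$ — is projective in $\bar{\calC}$ as soon as $\overline{V_{(r-1) \cdot \rho}}$ is.

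\emph{The crux.} It therefore all comes down to showing that $\overline{V_{(r-1) \cdot \rho}}$ is a projective object of $\bar{\calC}$, and this is the step I expect to be the genuine obstacle. The module $V_{(r-1) \cdot \rho}$ is simple of dimension $r^N$, with basis $\{ \prod_{k=1}^N F_{\beta_k}^{b_k} v_+ \mid 0 \leqslant b_1, \ldots, b_N < r \}$ by Proposition~34 of~\cite{GP13}; since the forgetful functor preserves simplicity of weight modules, $\overline{V_{(r-1) \cdot \rho}}$ is a cyclic, hence free of rank one, module over $\bar{U}_q \frakn_-$ (and likewise over $\bar{U}_q \frakn_+$ via the lowest weight vector), i.e. it is the Steinberg module of $\bar{U}_q \frakg$. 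Its projectivity in $\bar{\calC}$ is a classical fact about small quantum groups at odd roots of unity — the Steinberg module is the unique simple projective object — and I would invoke it here. Unlike the rest of the argument, which is formal, this last point is a genuinely representation-theoretic input about $\bar{U}_q \frakg$ and not a consequence of the abstract relative (pre-)modular structure of $\calC^H$.
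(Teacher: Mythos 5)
Your proof is correct and reduces, just as the paper's proof does, to the single key representation-theoretic input that $\overline{V_{(r-1)\cdot\rho}}$ is projective in $\bar{\calC}$. The paper reaches that reduction more directly by citing Lemma~17 of \cite{GPV13}, which says $V_{(r-1)\cdot\rho}$ generates $\Proj(\calC^H_{[0]})$, so any projective $P$ is a retract of $V_{(r-1)\cdot\rho}\otimes W$; your two-step detour through the relative projective generator $\bfP$ and the projective covers $P_\mu$ lands in the same place but is longer than necessary (and, incidentally, the remark $q_i^r=1$ needs no coprimality since $q_i^r=(q^r)^{d_i}=1$ automatically). For the crux the paper says the proof of Lemma~7.1 of \cite{CGP14} carries over to show $\overline{V_{(r-1)\cdot\rho}}$ is projective, whereas you invoke the projectivity of the Steinberg module of $\bar{U}_q\frakg$ as a known fact; these are two names for the same thing, and both are admissible, so there is no substantive gap.
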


\begin{proof}
 The typical $U^H_q\frakg$-module $V_{(r-1) \cdot \rho}$ introduced in Subsection \ref{Subs:unrolled_quantum_groups} generates $\Proj(\calC^H_{[0]})$ thanks to Lemma 17 of \cite{GPV13}. Then $P$ must be a direct summand of a tensor product $V_{(r-1) \cdot \rho} \otimes W$ for some $W \in \calC^H_{[0]}$. Now the proof of Lemma 7.1 of \cite{CGP14} can be repeated to show the image $\bar{V}_{(r-1) \cdot \rho}$ of $V_{(r-1) \cdot \rho}$ under the forgetful functor $\Phi_{\calC}$ is projective. This means $\bar{V}_{(r-1) \cdot \rho}$ generates $\Proj(\bar{\calC})$, and thus $\bar{P}$, which is a direct summand of $\bar{V}_{(r-1) \cdot \rho} \otimes \bar{W}$, is projective.
\end{proof}

In particular, the image $\bar{\bfP}$ of the relative projective generator $\bfP$ of $\calC^H_{[0]}$ defined in Subsection \ref{Subs:projective_generators} is projective.

\begin{lemma}\label{L:dimOne}
 $\dim_{\C} \left( \Hom_{\bar{\calC}}(\one,\bar{\bfP}) \right) = \dim_{\C} \left( \Hom_{\bar{\calC}}(\bar{\bfP},\one) \right) = 1$.
\end{lemma}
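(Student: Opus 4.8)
The plan is to transfer the computation of $\dim_{\C}(\Hom_{\calC^H}(\one,\bfP))$ and $\dim_{\C}(\Hom_{\calC^H}(\bfP,\one))$ from Lemma \ref{L:projective_generator} through the forgetful functor $\Phi_{\calC}$. First I would observe that, by Lemma \ref{L:projective_generator} with $\kappa = 0$, both $\Hom_{\calC^H}(\bfP,\sigma(0)) = \Hom_{\calC^H}(\bfP,\one)$ and $\Hom_{\calC^H}(\sigma(0),\bfP) = \Hom_{\calC^H}(\one,\bfP)$ are $1$-dimensional. The forgetful functor sends $\one$ to $\one$ and is faithful, so it induces injections $\Hom_{\calC^H}(\one,\bfP) \hookrightarrow \Hom_{\bar{\calC}}(\one,\bar{\bfP})$ and $\Hom_{\calC^H}(\bfP,\one) \hookrightarrow \Hom_{\bar{\calC}}(\bar{\bfP},\one)$; hence both $\Hom$-spaces on the small-quantum-group side have dimension at least $1$. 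The real content is the reverse inequality: I must show $\Phi_{\calC}$ does not create new invariant vectors or coinvariants.

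The key step is to compare $\dim_{\C}(\Hom_{\bar{\calC}}(\one,\bar{\bfP}))$ with $\dim_{\C}(\Hom_{\calC^H}(\one, P'))$ where $P'$ ranges over a suitable set of indecomposable projective weight modules lying over $\one$, using the structure result of Proposition \ref{P:IndecProj}. Recall that $\bar{\bfP}$ is projective in $\bar{\calC}$ by Lemma \ref{L:projectives}. A clean argument is: $\Hom_{\bar{\calC}}(\one, \bar{\bfP})$ counts (with multiplicity) the copies of the projective cover of $\one$ appearing as summands of $\bar{\bfP}$, because $\bar{\calC}$ is unimodular (the cointegral $\Lambda$ is two-sided), so the projective cover of $\one$ is also its injective hull and contains $\one$ with multiplicity exactly $1$ as a submodule, while no other indecomposable projective of $\bar{\calC}$ contains $\one$. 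So I need to determine how many summands of $\bar{\bfP} = \bigoplus_{\mu \in \calH_r} \bar{P}_\mu$ are isomorphic to the projective cover $\bar{P}_0$ of $\one = \bar{V}_0$ in $\bar{\calC}$. By construction $\bfP = \bigoplus_{\mu \in \calH_r} P_\mu$ with $P_\mu$ the projective cover of $V_\mu$ in $\calC^H$; since $\calH_r$ is a set of representatives of $\Lambda_R/\PGr$ and $\sigma$ acts freely, these $P_\mu$ are pairwise non-isomorphic, so $\bfP$ contains $P_0$ with multiplicity exactly $1$. The point is then that $\Phi_{\calC}(P_0) = \bar{P}_0$ is indecomposable with simple head $\one$, while $\Phi_{\calC}(P_\mu) = \bar{P}_\mu$ for $\mu \neq 0$ in $\calH_r$ decomposes in $\bar{\calC}$ into indecomposable projectives none of which is $\bar{P}_0$ — because the simple head of any summand of $\bar{P}_\mu$ is a quotient of $\bar{V}_\mu$, and $\bar{V}_\mu$ is not isomorphic to $\one = \bar{V}_0$ (the weights of $V_\mu$ lie in the coset $\mu + \PGr \subset \Lambda_R$, which is nonzero in $\Lambda_R/\PGr$, so they do not all reduce to $0$ modulo $r\Lambda_W$).

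Assembling: $\Hom_{\bar{\calC}}(\one,\bar{\bfP})$ has dimension equal to the multiplicity of $\bar{P}_0$ in $\bar{\bfP}$, which is $1$, and dually $\Hom_{\bar{\calC}}(\bar{\bfP},\one)$ has dimension equal to the multiplicity of $\bar{P}_0$ (now seen via its simple head $\one$), again $1$. This gives both the upper bounds, matching the lower bounds from faithfulness, so both dimensions equal $1$.

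I expect the main obstacle to be the claim that, for $\mu \in \calH_r \smallsetminus \{0\}$, the indecomposable summands of $\bar{P}_\mu = \Phi_{\calC}(P_\mu)$ are distinct from $\bar{P}_0$; equivalently, that $\one$ is not a composition factor of the head of $\bar{P}_\mu$. This requires tracking weights carefully through the reduction $\Lambda_R \to \Lambda_R/(\Lambda_R \cap r\Lambda_W)$ used to define $\bar{U}_q\frakg$-modules: the weight supports of $V_\mu$ and $V_0$ differ by the nonzero class $[\mu]\in \Lambda_R/\PGr$, and since $\PGr = \Lambda_R \cap r\Lambda_W$, this class remains nonzero after the reduction defining $\bar{\calC}$-weights. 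One could alternatively bypass this by invoking directly the adjunction $\Hom_{\bar{\calC}}(\one,\Phi_{\calC}(-)) \cong \Hom_{\calC^H}(?, -)$ for the left adjoint of $\Phi_{\calC}$ (an induction functor along $\bar{U}_q\frakg \hookrightarrow U^H_q\frakg$-weight-modules), but this requires setting up that adjoint, so the weight-bookkeeping route is cleaner given what is already in the paper.
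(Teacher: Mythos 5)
Your approach is genuinely different from the paper's and it has a real gap. You try to compute $\dim\Hom_{\bar\calC}(\one,\bar\bfP)$ as the multiplicity of the projective cover $\bar P_0$ of $\one$ among the direct summands of $\bar\bfP = \bigoplus_{\mu\in\calH_r}\Phi_\calC(P_\mu)$. The step that fails is your claim that, for $\mu\neq 0$, ``the simple head of any summand of $\bar P_\mu$ is a quotient of $\bar V_\mu$.'' The forgetful functor $\Phi_\calC$ is faithful but not full, so $\text{rad}(\bar P_\mu)$ need only be \emph{contained} in $\Phi_\calC(\text{rad}(P_\mu))$; hence $\text{head}(\bar P_\mu)$ surjects onto $\bar V_\mu$ but may have additional simple summands. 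In particular nothing you say rules out $\one$ occurring in $\text{head}(\bar P_\mu)$ for $\mu\neq 0$, which is exactly the possibility you identified at the outset (``$\Phi_\calC$ does not create new invariant vectors'') and then did not actually exclude. The same problem affects $\mu=0$: you assert $\Phi_\calC(P_0)=\bar P_0$ is indecomposable, but $\Phi_\calC$ is not known to preserve indecomposability, so $\Phi_\calC(P_0)$ might a priori contribute more than one copy of $\bar P_0$. Closing either of these gaps is essentially equivalent to the original statement applied to the individual $P_\mu$, so nothing has been gained.

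The reason the paper's argument succeeds where yours stalls is that it attacks the invariants directly rather than routing through projective covers. The paper observes that $\bar V^{\bar U_q\frakg}$, viewed inside $V$, is closed under the action of the $H_i$ (by the commutation relations $[H_i,E_j]=a_{ij}E_j$, etc.), so it is a $U^H_q\frakg$-submodule of $V$; since it consists of weight vectors killed by all $E_i,F_i$, it splits as $\bigoplus_i\sigma(\kappa_i)$ with $\kappa_i\in\PGr$. This converts the count to $\sum_{\kappa\in\PGr}\dim\Hom_{\calC^H}(\sigma(\kappa),V)$, which for $V=\bfP$ is $1$ by Lemma~\ref{L:projective_generator}. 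The crucial point, which your approach does not engage with, is that the ``new'' $\bar U_q\frakg$-invariants that $\Phi_\calC$ could create correspond precisely to embeddings $\sigma(\kappa)\hookrightarrow\bfP$ with $\kappa\neq 0$, and Lemma~\ref{L:projective_generator} rules those out; no information about heads or radicals of $\bar P_\mu$ is needed, and no hypothesis on $\gcd(r,\det(A))$ enters. If you wanted to salvage the projective-cover route, you would still have to prove $\Hom_{\bar\calC}(\one,\bar P_\mu)=\delta_{\mu,0}$ for each $\mu\in\calH_r$, which by the same invariants-as-$\sigma(\kappa)$-submodule argument reduces to the paper's proof anyway.
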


\begin{proof}
 Let $V$ be a weight $\smash{U^H_q \frakg}$-module in $\smash{\calC^H_{[0]}}$, and remark that its image $\bar{V}$ under $\Phi_{\calC}$ coincides with $V$ as a vector space. Let us consider the space $\bar{V}^{\bar{U}_q \frakg}$ of $\bar{U}_q \frakg$-invariants vectors of $\bar{V}$. Remark that $\Hom_{\bar{\calC}}(\one,\bar{V})$ is naturally isomorphic to $\bar{V}^{\bar{U}_q \frakg}$, simply by identifying every morphism $f \in \Hom_{\bar{\calC}}(\one,\bar{V})$ with the image $f(1) \in \bar{V}^{\bar{U}_q \frakg}$. We claim the subspace $V^{\bar{U}_q \frakg}$ of $V$ formed by vectors of $\bar{V}^{\bar{U}_q \frakg}$ is a $U^H_q \frakg$-submodule of $V$. Indeed, this follows from the commutation relations satisfied by the additional generators $H_1, \ldots, H_n$. But now remark that every weight vector of $V^{\bar{U}_q \frakg}$ with respect to the action of $U^H_q \frakg$ determines a split 1-dimensional submodule of $V^{\bar{U}_q \frakg}$ which is isomorphic to $\sigma(\kappa)$ for some $\kappa \in \PGr$, as all 1-dimensional weight $U^H_q \frakg$-modules in $\calC^H_{[0]}$ are. This means that
 \[
  \smash{V^{\bar{U}_q \frakg}} \cong \bigoplus_{i=1}^{\smash{\dim_{\C}(\bar{V}^{\bar{U}_q \frakg})}} \sigma(\kappa_i)
 \]
 with $\kappa_i \in \PGr$ for every integer $1 \leqslant i \leqslant \smash{\dim_{\C}(\bar{V}^{\bar{U}_q \frakg})}$. Thus we get 
 \[
  \dim_{\C}(\bar{V}^{\bar{U}_q \frakg}) \leq \dim_{\C} \left( \bigoplus_{\kappa \in \PGr} \Hom_{\calC^H} \left( \sigma(\kappa),V \right) \right),
\]
and the converse inequality follows from the equality $\Phi_{\calC}(\sigma(\kappa)) = \one$ for every $\kappa \in \PGr$.

 First, let us consider $V = \bfP$. Thanks to Lemma \ref{L:projective_generator}, we have 
 \[
  \dim_{\C} \left( \bigoplus_{\kappa \in \PGr} \Hom_{\calC^H} \left( \sigma(\kappa),\bfP \right) \right) = 1.
 \]  
 Thus, the space $\bar{\bfP}^{\bar{U}_q \frakg}$ is 1-dimensional. This means 
 \[
  \dim_{\C} \left( \Hom_{\bar{\calC}}(\one,\bar{\bfP}) \right) = 1.
 \]
 Next, let us consider $V = \bfP^*$. Thanks to Lemma \ref{L:projective_generator}, we have 
 \[
  \dim_{\C} \left( \bigoplus_{\kappa \in \PGr} \Hom_{\calC^H} \left( \sigma(\kappa),\bfP^* \right) \right) = \dim_{\C} \left( \bigoplus_{\kappa \in \PGr} \Hom_{\calC^H} \left( \bfP,\sigma(-\kappa) \right) \right) = 1.
 \]  
 Thus, the space $(\bar{\bfP}^*)^{\bar{U}_q \frakg}$ is 1-dimensional. This means
 \[
  \dim_{\C} \left( \Hom_{\bar{\calC}}(\bar{\bfP},\one) \right) = \dim_{\C} \left( \Hom_{\bar{\calC}}(\one,\bar{\bfP}^*) \right) = 1. \qedhere
 \]
\end{proof}

The last result we will need requires an additional hypothesis.

\begin{proposition}\label{P:barP_proj_gen}
 If $\gcd(r,\det(A)) = 1$, then $\bar{\bfP}$ is a projective generator of $\bar{\calC}$.
\end{proposition}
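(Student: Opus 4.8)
The plan is to show that every indecomposable projective object of $\bar{\calC}$ is a direct summand of $\bar{\bfP}$. This suffices: an arbitrary projective object $V$ of $\bar{\calC}$ is, by Krull--Schmidt, a finite direct sum $\bigoplus_{j=1}^n \bar{Q}_j$ of indecomposable projectives, each $\bar{Q}_j$ being a retract of $\bar{\bfP}$; composing the splitting morphisms of the $\bar{Q}_j$ with the canonical inclusions and projections relating $V$ and the $\bar{Q}_j$ yields a factorization $\id_V = \sum_{i=1}^m g_i \circ f_i$ with $f_i \in \Hom_{\bar{\calC}}(V,\bar{\bfP})$ and $g_i \in \Hom_{\bar{\calC}}(\bar{\bfP},V)$, which is precisely the projective generator property. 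Recall that $\bar{\bfP}$, the image under $\Phi_{\calC}$ of the relative projective generator $\bfP$ of $\calC^H_{[0]}$, is projective by Lemma \ref{L:projectives}.

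First I would identify the simple objects of $\bar{\calC}$. The simple objects of $\calC^H_{[0]}$ are exactly the modules $V_{\mu}$ with $\mu \in \Lambda_R$ (these being the $V_{\mu}$, $\mu \in \frakh^*$, all of whose weights lie in $\Lambda_R$), and $V_{\mu} \otimes \sigma(\kappa) \cong V_{\mu + \kappa}$ for $\kappa \in \PGr$, so the $\PGr$-orbits of simple objects of $\calC^H_{[0]}$ are indexed by $\calH_r$. Since $\gcd(r,\det(A)) = 1$, the forgetful functor $\Phi_{\calC}$ is essentially surjective, so any simple object $\bar{S}$ of $\bar{\calC}$ is isomorphic to $\Phi_{\calC}(V)$ for some $V \in \calC^H_{[0]}$; picking a simple subquotient $V_{\mu}$ of $V$ with $\mu \in \Lambda_R$ and using exactness of $\Phi_{\calC}$, the module $\bar{V}_{\mu} = \Phi_{\calC}(V_{\mu})$ is a nonzero subquotient of the simple module $\bar{S}$, hence $\bar{V}_{\mu} \cong \bar{S}$. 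Writing $\mu = \mu_0 + \kappa$ with $\mu_0 \in \calH_r$ and $\kappa \in \PGr$, and using $\Phi_{\calC}(\sigma(\kappa)) = \one$, we get $\bar{S} \cong \bar{V}_{\mu_0}$. Thus every simple object of $\bar{\calC}$ is isomorphic to $\bar{V}_{\mu_0}$ for some $\mu_0 \in \calH_r$.

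Now let $\bar{Q}$ be an indecomposable projective object of $\bar{\calC}$. Since $\bar{\calC}$ is the category of finite-dimensional modules over a finite-dimensional algebra, $\bar{Q}$ is the projective cover of a simple object, which by the previous paragraph is isomorphic to $\bar{V}_{\mu_0}$ for some $\mu_0 \in \calH_r$. The projective cover $P_{\mu_0} \twoheadrightarrow V_{\mu_0}$ in $\calC^H_{[0]}$ maps under the exact functor $\Phi_{\calC}$ to an epimorphism $\bar{P}_{\mu_0} \twoheadrightarrow \bar{V}_{\mu_0}$, and $\bar{P}_{\mu_0}$ is projective by Lemma \ref{L:projectives}. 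A projective object surjecting onto a simple object has the projective cover of that simple as a direct summand, so $\bar{Q}$ is a direct summand of $\bar{P}_{\mu_0}$, hence of $\bar{\bfP} = \bigoplus_{\mu \in \calH_r} \bar{P}_{\mu}$. This proves the claim, and hence the proposition.

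The step requiring the most care is the identification of the simple objects of $\bar{\calC}$ with the $\bar{V}_{\mu_0}$, $\mu_0 \in \calH_r$: this is exactly where the hypothesis $\gcd(r,\det(A)) = 1$ enters, through essential surjectivity of $\Phi_{\calC}$, which I would cite or establish separately. An alternative argument, which sidesteps a discussion of simple objects but still uses essential surjectivity, is to recall from the proof of Lemma \ref{L:projectives} that $\bar{V}_{(r-1) \cdot \rho}$ generates $\Proj(\bar{\calC})$, write an arbitrary $\bar{W} \in \bar{\calC}$ as $\Phi_{\calC}(W)$ with $W \in \calC^H_{[0]}$, note that $V_{(r-1) \cdot \rho} \otimes W$ is projective in $\calC^H_{[0]}$ (as $(r-1) \rho \in \Lambda_R$ since $r$ is odd, and $V_{(r-1) \cdot \rho}$ is typical, hence projective) and thus admits a factorization of its identity through objects $\bfP \otimes \sigma(\kappa_i)$ by the relative projective generator property of $\bfP$, and transport this factorization through $\Phi_{\calC}$ using $\Phi_{\calC}(\sigma(\kappa)) = \one$ to obtain the required factorization of $\id_{\bar{V}_{(r-1) \cdot \rho} \otimes \bar{W}}$ through copies of $\bar{\bfP}$.
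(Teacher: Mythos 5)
Your proposal is correct and follows essentially the same route as the paper's proof: reduce to showing that every indecomposable projective $\bar{U}_q\frakg$-module is a direct summand of $\bar{\bfP}$, lift its simple top to a simple weight module $V_{\mu_0}$ in $\calC^H_{[0]}$ with $\mu_0 \in \calH_r$, and push the projective cover $P_{\mu_0}$ down through $\Phi_\calC$. The one place where you take a shortcut the paper does not is the invocation of essential surjectivity of $\Phi_\calC$ as an external ingredient. Be aware that the paper never actually proves essential surjectivity anywhere — it is only asserted in the introduction — and the content of the paper's proof of \emph{this} proposition is precisely the explicit lift you would need: given a simple $\bar{U}_q\frakg$-module with highest weight $\varphi : \bar{U}_q\frakh \to \C$, the paper first writes $\varphi(K_i) = q^{\langle\omega,\alpha_i\rangle}$ for some $\omega \in \Lambda_W$ using that $r$ is coprime to the $d_i$, then uses $\gcd(r,\det(A))=1$ to invert the matrix $(d_i a_{ij})$ modulo $r$ and solve for $\tilde\omega \in \Lambda_R$ with $\langle\tilde\omega,\alpha_j\rangle \equiv \langle\omega,\alpha_j\rangle \pmod r$, so that $\Phi_\calC(V_{\tilde\omega}) \cong V$. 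Your nonzero-subquotient argument is a clean way to extract the right representative $\mu_0 \in \calH_r$ once you know $V$ is in the image (or a subquotient thereof), but to get there you would have to reproduce this congruence-solving step; ``cite or establish separately'' should be replaced by the explicit calculation, as that is exactly where the hypothesis earns its keep. Your alternative argument at the end is also valid and a bit slicker in avoiding the discussion of simples, but it carries the same dependency on essential surjectivity, so the same caveat applies.
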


\begin{proof}
%
  Let us show that every indecomposable projective
  $\bar{U}_q \frakg$-module $P$ is isomorphic to a direct summand of
  $\bar{\bfP}$. Let $V$ be the unique simple quotient of $P$. Its
  highest weight is a ring homomorphism
  $\varphi : \bar{U}_q \frakh \to \C$ assigning to each $K_i$ the root
  of unity by which it acts on the highest weight vector of $V$. Since
  $r$ is coprime with $d_1, \ldots, d_n$ defined in Appendix
  \ref{A:quantum_groups}, there exists some $\omega \in \Lambda_W$
  satisfying $\varphi(K_i)=q^{\brk{\omega,\alpha_i}}$ for every
  integer $1 \leq i \leq n$. Furthermore, since $r$ is coprime also
  with $\det(A)$, the matrix $(d_ia_{ij})_{1 \leq i,j \leq n}$ is
  invertible modulo $r$. In particular, there exist
  $\tilde{\omega}_i \in \Lambda_R$ such that
  $\brk{\tilde{\omega}_i,\alpha_j} \equiv \delta_{ij}$ modulo $r$ for
  all integers $1 \leq i,j \leq n$. Then, if we set
 \[
  \tilde{\omega} := \sum_{i=1}^n \langle \omega,\alpha_i \rangle \cdot \tilde{\omega}_i \in \Lambda_R,
 \]
 the simple weight $U^H_q \frakg$-module $V_{\tilde{\omega}} \in \calC^H_{[0]}$ satisfies $\Phi_{\calC}(V_{\tilde{\omega}}) \cong V$,
 because $\Phi_{\calC}(V_{\tilde{\omega}})$ and $V$ have the same highest weight. Moreover, if $P_{\tilde{\omega}}$ is a projective cover of $V_{\tilde{\omega}}$, then $P_{\tilde{\omega}} \otimes \sigma(\kappa)$ is isomorphic to a direct summand of $\bfP$ for some $\kappa \in \PGr$. But now, since $\Phi_{\calC}(P_{\tilde{\omega}} \otimes \sigma(\kappa))$ is projective, it must contain $P$ as a direct summand, so this proves our statement. 
%
\end{proof}

\section{Equality of 3-manifold invariants}

The goal for this section is to prove Theorem \ref{T:main_result}. We will use as a key ingredient the fact that meridians labeled with Kirby colors have the cutting property with respect to the relative projective generator $\bfP$ of $\smash{\calC^H_{[0]}}$, while red meridians labeled with the regular representation have the cutting property with respect to its image $\bar{\bfP}$ in $\bar{\calC}$. The proof will require a comparison of all the ingredients that correspond to each other in the two theories. Since it is part of the hypotheses of Theorem  \ref{T:main_result}, we will suppose throughout this section that $\gcd(r,\det(A)) = 1$.

\subsection{Stabilized surgery presentations}\label{Subs:stabilized_surgery_presentations}

In this subsection we introduce special surgery presentations of admissible decorated closed 3-manifolds which are tailored for the comparison between the CGP and the renormalized Hennings invariants. We start with a preliminary comment about our notation: we will always denote with $F_{\bar{\calC}}$ the restriction of $F_\lambda$ to $\calC$-colored blue ribbon graphs, and similarly we will always denote with $F'_{\bar{\calC}}$ the restriction of $F'_\lambda$ to admissible closed $\bar{\calC}$-colored blue ribbon graphs, in order to stress the absence of red edges. Next, we need to compare the m-trace $\rmt^H$ on $\Proj(\calC^H)$ with the m-trace $\bar{\rmt}$ on $\Proj(\bar{\calC})$.

\begin{remark}\label{R:comparison_traces}
Both $\rmt^H$ and $\bar{\rmt}$ are unique up to scalar, but the chosen normalizations do not agree, as they are determined by the conditions
\[
 \rmt^H_{\bfP}(\bfLambda \circ \bfepsilon) = 1, \quad \bar{\rmt}_{\bar{U}}(\Lambda \circ \varepsilon) = 1
\]
respectively, where $\bfP$ is the relative projective generator of $\smash{\calC^H_{[0]}}$ introduced in Subsection \ref{Subs:projective_generators}, where $\bfepsilon \in \Hom_{\calC^H}(\bfP,\one)$ and $\bfLambda \in \Hom_{\calC^H}(\one,\bfP)$ are the morphisms introduced in Lemma \ref{L:cutting_bfP}, where $\bar{U}$ is the regular representation of $\bar{U}_q \frakg$, and where $\varepsilon$ and $\Lambda$ are the counit and the cointegral respectively.
Nevertheless, $\bar{\rmt} \circ \Phi_{\calC}$ clearly defines an m-trace
on $\Proj(\calC^H_{[0]})$: indeed, it obviously satisfies the cyclicity property, and the partial trace property is just a consequence of the fact that partial
traces commute with the ribbon functor $\Phi_{\calC}$ (see also Corollary 2.8 of
\cite{FG18} for a similar statment in the setting of
finite dimensional Hopf algebras).
Therefore, 
there exists a non-zero coefficient $\alpha \in \C^*$ such that
\[
 \restr{\rmt^H}{\calC^H_{[0]}} = \alpha \cdot \bar{\rmt} \circ \Phi_{\calC},
\]
meaning that $\rmt^H_V(f) = \alpha \bar{\rmt}_{\bar{V}}(\bar{f})$ for every $V \in \Proj(\calC^H_{[0]})$ and every $f \in \End_{\calC^H}(V)$. 
\end{remark}

\begin{lemma}\label{L:comparison_of_renormalized_graph_invariants}
 The renormalized invariants $F'_{\calC^H_{[0]}}$ and $F'_{\bar{\calC}}$ satisfy
 \[
  F'_{\calC^H_{[0]}} = \alpha \cdot F'_{\bar{\calC}} \circ \Phi_{\calR},
 \]
 meaning that $F'_{\smash{\calC^H_{[0]}}}(T) = \alpha F'_{\bar{\calC}}(\bar{T})$ for every closed admissible $\calC^H_{[0]}$-colored ribbon graph $T$, where $\alpha$ is the coefficient introduced in Remark \ref{R:comparison_traces}.
\end{lemma}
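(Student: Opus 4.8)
The plan is to unwind the definitions of the two renormalized invariants and push everything through the forgetful functor, relying on the three ingredients already established: Lemma~\ref{L:forgetful_functors} (compatibility of $\Phi_{\calC}$ with the Reshetikhin--Turaev functors, i.e.\ $\Phi_{\calC} \circ F_{\calC^H_{[0]}} = F_{\bar{\calC}} \circ \Phi_{\calR}$), Lemma~\ref{L:projectives} ($\Phi_{\calC}$ sends projective objects to projective objects), and Remark~\ref{R:comparison_traces} ($\rmt^H_V(f) = \alpha\,\bar{\rmt}_{\bar{V}}(\bar{f})$ for every $V \in \Proj(\calC^H_{[0]})$ and every $f \in \End_{\calC^H}(V)$).

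First I would fix an admissible closed $\calC^H_{[0]}$-colored ribbon graph $T$, so $T$ carries at least one projective edge, and cut along such an edge to obtain a cutting presentation $T_V \in \End_{\calR_{\calC^H_{[0]}}}((+,V))$ with $V \in \Proj(\calC^H_{[0]})$ whose closure is $T$, so that $F'_{\calC^H_{[0]}}(T) = \rmt^H_V(F_{\calC^H_{[0]}}(T_V))$. The key observation is that $\Phi_{\calR}(T_V)$ is a cutting presentation of $\bar{T}$: since $\Phi_{\calR}$ is a ribbon functor it commutes with taking closures, so $\Phi_{\calR}(T_V)$ is an endomorphism of $(+,\bar{V})$ whose closure is $\Phi_{\calR}(T) = \bar{T}$, and $\bar{V}$ is projective by Lemma~\ref{L:projectives}; in particular $\bar{T}$ is itself admissible, so $F'_{\bar{\calC}}(\bar{T})$ is defined and, by the well-definedness of the renormalized invariant, independent of the chosen cutting presentation. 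Combining this with Lemma~\ref{L:forgetful_functors} and Remark~\ref{R:comparison_traces} I would then compute
\[
 F'_{\calC^H_{[0]}}(T) = \rmt^H_V\bigl(F_{\calC^H_{[0]}}(T_V)\bigr) = \alpha\,\bar{\rmt}_{\bar{V}}\bigl(\Phi_{\calC}(F_{\calC^H_{[0]}}(T_V))\bigr) = \alpha\,\bar{\rmt}_{\bar{V}}\bigl(F_{\bar{\calC}}(\Phi_{\calR}(T_V))\bigr) = \alpha\,F'_{\bar{\calC}}(\bar{T}),
\]
which is exactly the claim, noting that $F'_{\bar{\calC}}$ is the red-free restriction of $F'_\lambda$ as fixed at the start of Subsection~\ref{Subs:stabilized_surgery_presentations}, so no red edges enter the comparison.

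I do not expect a genuine obstacle here: the lemma is essentially a bookkeeping consequence of functoriality together with the already-proven facts. The only points that require a little care are (i) that cutting $T$ along a projective edge is compatible with $\Phi_{\calR}$ — that is, the functor turns a cutting presentation over $\calC^H_{[0]}$ into a cutting presentation over $\bar{\calC}$, which follows since $\Phi_{\calR}$ preserves composition, trace, and (by Lemma~\ref{L:projectives}) projectivity of the cut object — and (ii) that we indeed land in $\Proj(\calC^H_{[0]})$ rather than merely $\Proj(\calC^H)$, so that the m-trace comparison of Remark~\ref{R:comparison_traces} applies; this is automatic because $T$ is $\calC^H_{[0]}$-colored, hence so is $V$.
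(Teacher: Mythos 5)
Your proof is correct and follows essentially the same route as the paper's: cut $T$ along a projective edge, apply Remark~\ref{R:comparison_traces} to pass between the m-traces, and invoke Lemma~\ref{L:forgetful_functors} to commute the forgetful functor past the Reshetikhin--Turaev functor. The paper's version is terser, leaving implicit the points you spell out (that $\Phi_{\calR}(T_V)$ is a cutting presentation of $\bar{T}$ and that $\bar{V}$ is projective via Lemma~\ref{L:projectives}), but the argument is the same.
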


\begin{proof}
 If a closed admissible $\smash{\calC^H_{[0]}}$-colored ribbon graph $T$ admits a projective edge of color $V$, and if $T_V$ is a cutting presentation of $T$,
 then we have
 \[
  \rmt^H_V(F_{\calC^H_{[0]}}(T_V)) = \alpha \bar{\rmt}_{\bar{V}}(\Phi_{\calC}(F_{\calC^H_{[0]}}(T_V))) = \alpha \bar{\rmt}_{\bar{V}}(F_{\bar{\calC}}(\Phi_{\calR}(T_V))),
 \]
 where the second equality follows from Lemma \ref{L:forgetful_functors}.
\end{proof}

Now, let us recall the formulas defining the CGP and the renormalized Hennings invariants in the setting of Theorem \ref{T:main_result}. If $(M,T,0,0)$ is a closed connected morphism of $\adCob_{\calC^H}^G$, if $L = L_1 \cup \ldots \cup L_{\ell} \subset S^3$ is a surgery presentation of $M$, and if we replace $(T,0)$ with some $(\tilde{T},\tilde{\omega})$ obtained by projective stabilization of sufficiently generic index ensuring $L$ becomes computable, as explained in Subsection \ref{Subs:CGP_invariants} and, in greater detail, in Section 3.2 of \cite{D17}, then we have
\begin{equation*}\label{E:FormulaN00}
 \rmN_{\calC^H}(M,T,0,0) = \calD_{\Omega}^{-1-\ell} \delta_{\Omega}^{-\sigma(L)} F'_{\calC^H}(L \cup \tilde{T}).
\end{equation*}
On the other hand, if $(M,\bar{T},0)$ is the closed connected morphism of $\adCob_{\bar{\calC}}$ obtained by applying the functor $\Phi_{\calR}$ to $T \subset M$, then we have
\begin{equation*}\label{E:FormulaH'0}
 \rmH'_{\bar{\calC}}(M,\bar{T},0) = \calD_{\lambda}^{-1-\ell} \delta_{\lambda}^{-\sigma(L)} F'_{\lambda}(L \cup \bar{T}).
\end{equation*}
The surgery presentation $L$ is used in different ways by the two constructions. In the first case, $L$ is labeled with Kirby colors as prescribed by $\tilde{\omega}$, and thus it is not a morphism in the domain of $\Phi_{\calR}$. In the second case, $L$ is taken to be red, and thus it is not a morphism in the image of $\Phi_{\calR}$. In order to compare the two formulas, we introduce special morphisms of $\bar{\calC}$ which encode these two different procedures.

First, for all weights $\mu,\nu \in \ddot{\frakh}^*$ satisfying $[\mu] = [\nu] \in G \smallsetminus X$, the tensor product $W_{\mu,\nu} := V_{\mu} \otimes V_{\nu}^*$ is an object of $\smash{\calC^H_{[0]}}$. Therefore, since $\bar{U}$ is a projective generator of $\bar{\calC}$, we can fix a decomposition
\[
 \id_{\bar{W}_{\mu,\nu}} = \sum_{i=1}^{m_{\mu,\nu}} g_{\mu,\nu,i} \circ f_{\mu,\nu,i}
\]
for some morphisms $f_{\mu,\nu,i} \in \Hom_{\bar{\calC}}(\bar{W}_{\mu,\nu},\bar{U})$ and $g_{\mu,\nu,i} \in \Hom_{\bar{\calC}}(\bar{U},\bar{W}_{\mu,\nu})$. Let us also set
\[
 d_{\mu,\nu} := (p_{\mu,\nu} \otimes \id_{W_{\mu,\nu}}) \circ (\id_{V_{\nu}} \otimes \rcoev_{V_{\mu}} \otimes \id_{V_{\nu}^*}) \circ s_{\nu} \in \Hom_{\calC^H}(\bfP,W_{\mu,\nu}^* \otimes W_{\mu,\nu}),
\]
where $s_{\nu} \in \Hom_{\calC^H}(\bfP,W_{\nu,\nu})$ is a morphism satisfying $\rev_{V_{\nu}} \circ s_{\nu} = \bfepsilon$, and where $p_{\mu,\nu} \in \Hom_{\calC^H}(V_{\nu} \otimes V_{\mu}^*,(V_{\mu} \otimes V_{\nu}^*)^*)$ is the isomorphism coming from the pivotal structure of $\calC^H$. Now, let us fix once and for all a weight $\gamma \in \ddot{\frakh}^*$ satisfying $[\gamma] \in G \smallsetminus X$. Then we denote with $h_{\Omega} \in \Hom_{\bar{\calC}}(\bar{\bfP},\bar{U}^* \otimes \bar{U})$ the morphism
\[
 h_{\Omega} := \sum_{\mu \in \{ \gamma \} + \calH_r} \sum_{i=1}^{m_{\mu,\gamma}} \rmd^H(V_{\mu}) \cdot ((g_{\mu,\gamma,i})^* \otimes f_{\mu,\gamma,i}) \circ \bar{d}_{\mu,\gamma}.
\]

Next, we denote with $f_{\lambda \otimes 1} \in \Hom_{\bar{\calC}}(\bar{U},\bar{U}^* \otimes \bar{U})$ the unique morphism which sends the generator $1 \in \bar{U}$ to $\lambda \otimes 1 \in \bar{U}^* \otimes \bar{U}$, where $\lambda$ is the right integral of $\bar{U}_q \frakg$, and we consider a morphism $s_{\bar{\bfP}} \in \Hom_{\bar{\calC}}(\bar{\bfP},\bar{U} \otimes \bar{\bfP})$ satisfying $(\varepsilon \otimes \id_{\bar{\bfP}}) \circ s_{\bar{\bfP}} = \id_{\bar{\bfP}}$. Then we denote with $h_{\lambda} \in \Hom_{\bar{\calC}}(\bar{\bfP},\bar{U}^* \otimes \bar{U})$ the morphism
\[
 h_{\lambda} := \left( f_{\lambda \otimes 1} \otimes \bar{\bfepsilon} \right) \circ s_{\bar{\bfP}}.
\]

This allows us to review the recipe for the computation of the two invariants. If $(M,T,0,0)$ is a closed morphism of $\adCob_{\calC^H}^G$, if $e \subset T$ is a projective edge of color $V$, and if $L = L_1 \cup \ldots \cup L_{\ell} \subset S^3$ is a surgery presentation of $M$, then let us fix disjoint paths $\gamma_j \subset S^3 \smallsetminus (L \cup T)$ connecting $e$ to $L_j$ for every integer $1 \leq j \leq \ell$. Before starting, we perform special projective stabilizations both on $e \subset T$ and on $\bar{e} \subset \bar{T}$ at the intersection point with $\gamma_j$ for every integer $1 \leq j \leq \ell$, as shown in Figure \ref{F:projective_stabilizations}, 
\begin{figure}[t]
 \centering
 \includegraphics{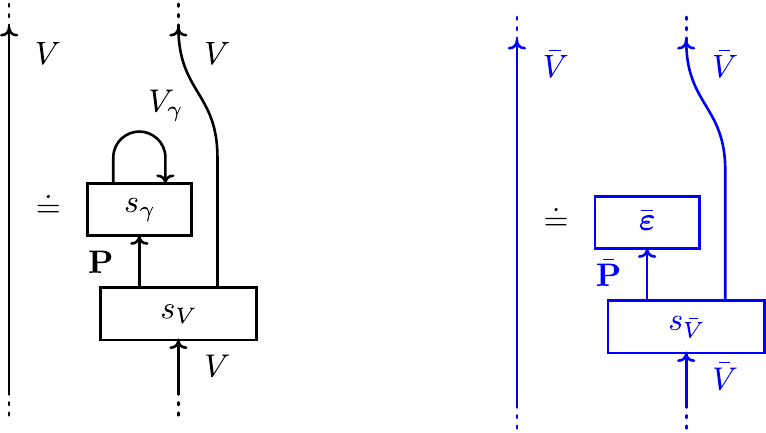}
 \caption{Projective stabilizations on $e \subset T$ and on $\bar{e} \subset \bar{T}$.}
 \label{F:projective_stabilizations}
\end{figure}
where $s_V \in \Hom_{\calC^H}(V,\bfP \otimes V)$ is a section of $\bfepsilon \otimes \id_V$, where $s_{\bar{V}} \in \Hom_{\bar{\calC}}(\bar{V},\bar{\bfP} \otimes \bar{V})$ is its image under $\Phi_{\calC}$, and where $s_{\gamma} \in \Hom_{\calC^H}(\bfP,W_{\gamma,\gamma})$ is a morphism satisfying $\rev_{V_{\gamma}} \circ s_{\gamma} = \bfepsilon$. Next, we isotope the $s_\gamma$-colored and the $\bar{\bfepsilon}$-colored coupons along the path $\gamma_j$ until the intersection point with $L_j$. This is our initial configuration.

Let us start from the CGP invariant. First, we need to slide every $V_\gamma$-colored edge along the corresponding component $L_j$, so to turn the surgery presentation $L$ into a computable one. Proposition 3.1 of \cite{D17} implies the choice of the weight $\gamma \in \ddot{\frakh}^*$ satisfying $[\gamma] \in G \smallsetminus X$ is inconsequential. This produces the $\calC^H$-colored ribbon graph represented in the top-left corner of Figure \ref{F:Omega-stabilization}. \begin{figure}[t]
 \centering
 \includegraphics{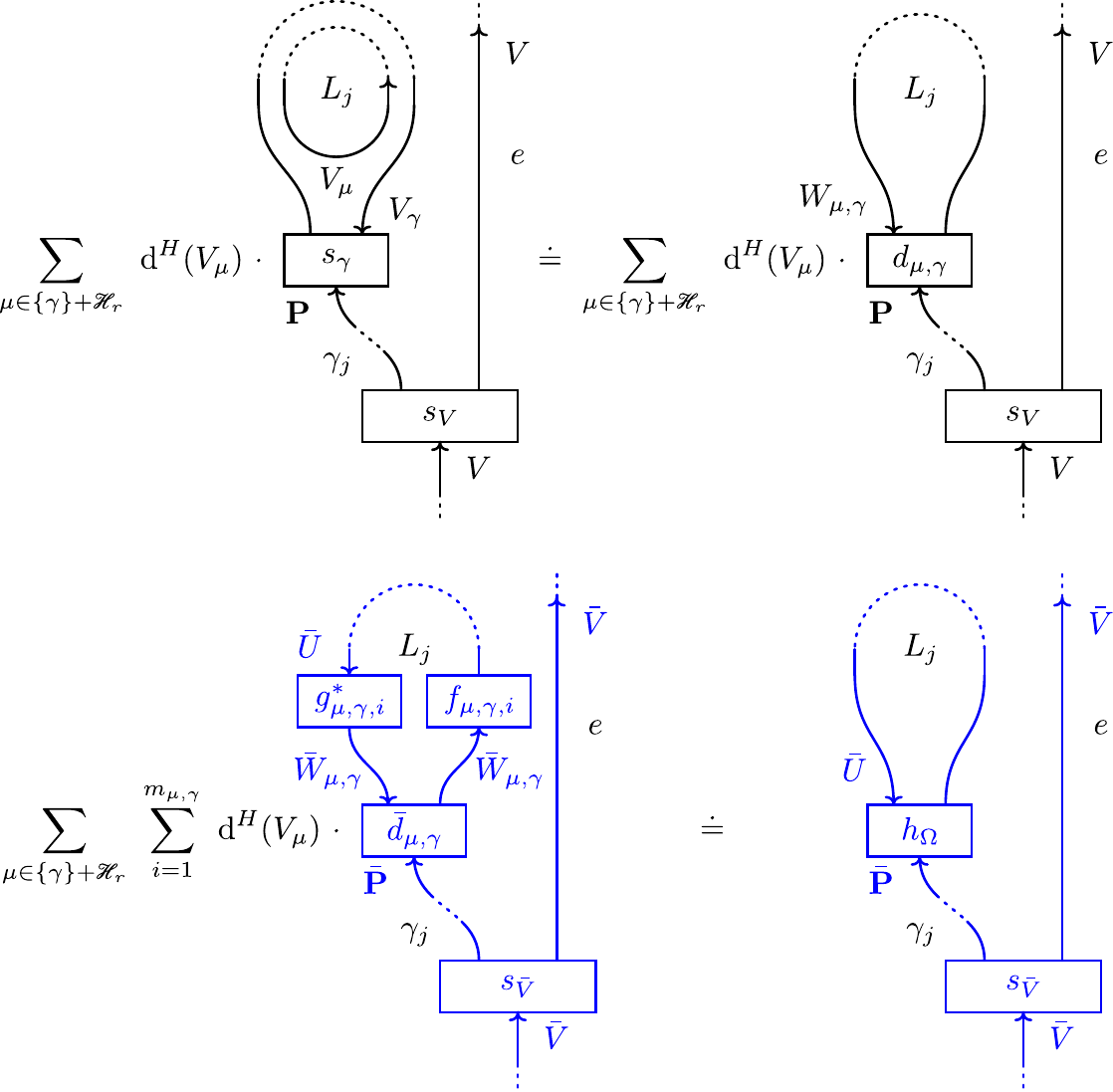}
 \caption{Skein equivalences of $\calC^H$-colored and $\bar{\calC}$-colored ribbon graphs defining $(L \cup \bar{T})_{h_{\Omega}}$.}
 \label{F:Omega-stabilization}
\end{figure} Up to skein equivalence of $\calC^H$-colored ribbon graphs, we can replace a tubular neighborhood of $L_j$ as shown in the top-right corner of Figure \ref{F:Omega-stabilization}, thus obtaining a $\smash{\calC^H_{[0]}}$-colored ribbon graph. This means we can apply the functor $\Phi_{\calR}$ which, up to skein equivalence of $\bar{\calC}$-colored ribbon graphs, produces the $\bar{\calC}$-colored ribbon graph represented in the bottom-left corner of Figure \ref{F:Omega-stabilization}. Again up to skein equivalence of $\bar{\calC}$-colored ribbon graphs, we can replace a tubular neighborhood of $L_j$ as shown in the bottom-right corner of Figure \ref{F:Omega-stabilization}. The resulting $\bar{\calC}$-colored ribbon graph is denoted $(L \cup \bar{T})_{h_{\Omega}}$, and is said to be obtained from the surgery presentation $L$ and from the admissible $\smash{\calC^H_{[0]}}$-colored ribbon graph $T$ by \textit{$\Omega$-stabilization along the paths $\gamma_1,\ldots,\gamma_{\ell}$}. By construction, using Lemma \ref{L:comparison_of_renormalized_graph_invariants}, we have 
\[
 \rmN_{\calC^H}(M,T,0,0) = \alpha \calD_{\Omega}^{-1-\ell} \delta_{\Omega}^{-\sigma(L)} F'_{\bar{\calC}} \left( (L \cup \bar{T})_{h_{\Omega}} \right).
\]

Let us move on to discuss the renormalized Hennings invariant. First, we need to interpret every component $L_j$ as a red edge, and to label it with the regular representation $\bar{U}$. This produces the $\bar{\calC}$-colored bichrome graph represented in the left-hand part of Figure \ref{F:lambda-stabilization}. 
\begin{figure}[t]
 \centering
 \includegraphics{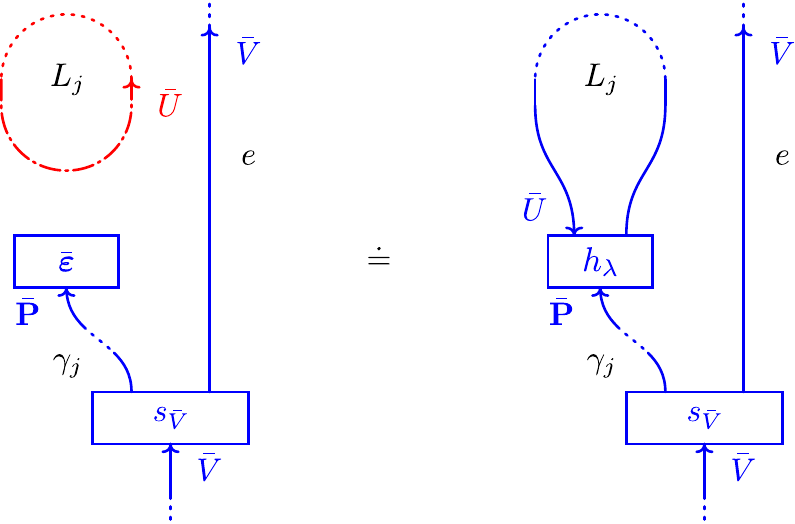}
 \caption{Skein equivalence of $\bar{\calC}$-colored bichrome graphs defining $(L \cup \bar{T})_{h_{\lambda}}$.}
 \label{F:lambda-stabilization}
\end{figure}
Up to skein equivalence of $\bar{\calC}$-colored bichrome graphs, we can turn every red component blue by replacing a tubular neighborhood of $L_j$ as shown in the right-hand side of Figure \ref{F:lambda-stabilization}. The resulting $\bar{\calC}$-colored ribbon graph is denoted $(L \cup \bar{T})_{h_{\lambda}}$, and is said to be obtained from the surgery presentation $L$ and from the admissible $\smash{\calC^H_{[0]}}$-colored ribbon graph $T$ by \textit{$\lambda$-stabilization along the paths $\gamma_1,\ldots,\gamma_{\ell}$}. By construction, using Lemma 3.8 of \cite{DGP18}, we have 
\[
 \rmH'_{\bar{\calC}}(M,\bar{T},0) = \calD_{\lambda}^{-1-\ell} \delta_{\lambda}^{-\sigma(L)} F'_{\bar{\calC}} \left( (L \cup \bar{T})_{h_{\lambda}} \right).
\]

Remark that, since $(L \cup \bar{T})_{h_{\Omega}}$ and $(L \cup \bar{T})_{h_{\lambda}}$ were obtained starting from surgery presentations computing $\rmN_{\calC^H}$ and $\rmH'_{\bar{\calC}}$ through operations which do not alter the values of $F'_{\calC^H}$ and of $F'_{\lambda}$ respectively, this means that both $F'_{\bar{\calC}} ( (L \cup \bar{T})_{h_{\Omega}})$ and $F'_{\bar{\calC}} ( (L \cup \bar{T})_{h_{\lambda}})$ are independent of the choice of the paths $\gamma_1, \ldots, \gamma_\ell$.

\subsection{Stabilization coefficients}

Next, we need to compare $\calD_{\Omega}$ with $\calD_{\lambda}$, $\delta_{\Omega}$ with $\delta_{\lambda}$, and $h_{\Omega}$ with $h_{\lambda}$. In order to do this, we will prove a key technical result. Let $\smash{(\bbS^1 \times \bbS^1)_{(-,\bar{\bfP})}}$ be the object of $\adCob_{\bar{\calC}}$ defined by
\[
 \smash{(\bbS^1 \times \bbS^1)_{(-,\bar{\bfP})}} := \left( S^1 \times S^1,P_{(-,\bar{\bfP})},\calL \right),
\]
where the blue $\bar{\calC}$-colored ribbon set $\smash{P_{(-,\bar{\bfP})}}$ is given by a single framed point with negative orientation and color $\bar{\bfP}$, and where the Lagrangian subspace $\calL$ is generated by the homology class of the curve $\{ (1, 0) \} \times S^1$. Analogously, let $\smash{\bbS^2_{((-,\bar{\bfP}),(-,\bar{\bfP}),(+,\bar{\bfP}))}}$ be the object of $\adCob_{\bar{\calC}}$ defined by
\[
 \smash{\bbS^2_{((-,\bar{\bfP}),(-,\bar{\bfP}),(+,\bar{\bfP}))}} := \left( S^2,P_{((-,\bar{\bfP}),(-,\bar{\bfP}),(+,\bar{\bfP}))},\{ 0 \} \right),
\]
where the blue $\bar{\calC}$-colored ribbon set $\smash{P_{((-,\bar{\bfP}),(-,\bar{\bfP}),(+,\bar{\bfP}))}}$ is given by three framed points, two with negative, one with positive orientation, and all with color $\bar{\bfP}$. Let us also consider the morphism $\smash{(\bbD^3 \smallsetminus \bbN^3)_{\bar{\bfP}} : (\bbS^1 \times \bbS^1)_{(-,\bar{\bfP})} \rightarrow \bbS^2_{((-,\bar{\bfP}),(-,\bar{\bfP}),(+,\bar{\bfP}))}}$ of $\adCob_{\bar{\calC}}$ defined by
\[
 \smash{(\bbD^3 \smallsetminus \bbN^3)_{\bar{\bfP}}} := \left( D^3 \smallsetminus N^3,T_{\bar{\bfP}},0 \right),
\]
where $N^3 \subset D^3$ is an open tubular neighborhood of the curve $\{ 0 \} \times \frac{1}{2} \cdot S^1 \subset D^3$, and where the $\bar{\calC}$-colored framed tangle $T_{\bar{\bfP}}$ is represented in Figure \ref{F:disc_minus_solid_torus}.

\begin{figure}[ht]
 \centering
 \includegraphics{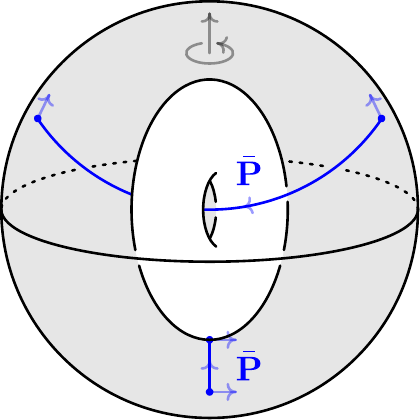}
 \caption{The morphism $(\bbD^3 \smallsetminus \bbN^3)_{\bar{\bfP}}$ of $\adCob_{\bar{\calC}}$.}
 \label{F:disc_minus_solid_torus}
\end{figure}

\begin{lemma}\label{L:injective_map_from_torus_to_disc}
 The linear map
 \[
  \smash{\rmV_{\bar{\calC}} \left( (\bbD^3 \smallsetminus \bbN^3)_{\bar{\bfP}} \right) : \rmV_{\bar{\calC}} \left( (\bbS^1 \times \bbS^1)_{(-,\bar{\bfP})} \right) \rightarrow \rmV_{\bar{\calC}} \left( \bbS^2_{((-,\bar{\bfP}),(-,\bar{\bfP}),(+,\bar{\bfP}))} \right)}
 \]
 is injective.
\end{lemma}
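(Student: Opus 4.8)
The plan is to prove injectivity by producing a one-sided inverse at the level of the TQFT. Since $\rmV_{\bar\calC}$ is a genuine (anomaly-absorbed) functor, it suffices to exhibit a morphism $g : \bbS^2_{((-,\bar\bfP),(-,\bar\bfP),(+,\bar\bfP))} \to (\bbS^1 \times \bbS^1)_{(-,\bar\bfP)}$ of $\adCob_{\bar\calC}$ such that $g \circ (\bbD^3 \smallsetminus \bbN^3)_{\bar\bfP}$ induces a nonzero scalar multiple of $\id$ on $\rmV_{\bar\calC}((\bbS^1 \times \bbS^1)_{(-,\bar\bfP)})$; then $\rmV_{\bar\calC}((\bbD^3 \smallsetminus \bbN^3)_{\bar\bfP})$ is injective. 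The natural candidate for $g$ is a suitably reflected copy of $(\bbD^3 \smallsetminus \bbN^3)_{\bar\bfP}$ itself: topologically a ball with an open tubular neighbourhood of an unknotted circle removed, decorated with the mirror image of the tangle $T_{\bar\bfP}$ of Figure \ref{F:disc_minus_solid_torus}, arranged so that the colours and orientations of the marked points on the two boundary components match those required of a morphism from $\bbS^2_{((-,\bar\bfP),(-,\bar\bfP),(+,\bar\bfP))}$ to $(\bbS^1 \times \bbS^1)_{(-,\bar\bfP)}$, and so that the Lagrangian on the torus boundary is again the one generated by $\{ (1,0) \} \times S^1$.

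\textbf{Identifying the composite.} Next I would analyse $g \circ (\bbD^3 \smallsetminus \bbN^3)_{\bar\bfP}$. Gluing the two ball-minus-solid-torus pieces along their common $S^2$ boundary turns $D^3 \cup_{S^2} D^3 = S^3$ into the complement of the $2$-component unlink, which is diffeomorphic to $T^2 \times I$; the two residual torus boundaries become the incoming and outgoing copies of $S^1 \times S^1$, and the induced Lagrangians and (vanishing) signature defect are those of the identity cobordism of $(\bbS^1 \times \bbS^1)_{(-,\bar\bfP)}$. Inside $T^2 \times I$, the doubled tangle can be isotoped rel boundary into the disjoint union of the vertical strand $\{ \mathrm{pt} \} \times I$ coloured $(-,\bar\bfP)$ and a trivially framed closed $\bar\bfP$-coloured component carrying the coupon $\bar\bfLambda \circ \bar\bfepsilon$ obtained by doubling the $\bfepsilon$- and $\bfLambda$-coupons of $T_{\bar\bfP}$. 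Hence $g \circ (\bbD^3 \smallsetminus \bbN^3)_{\bar\bfP}$ is, as a morphism of $\adCob_{\bar\calC}$, the identity of $(\bbS^1 \times \bbS^1)_{(-,\bar\bfP)}$ together with a disjoint closed admissible $\bar\calC$-coloured graph whose renormalized invariant is $F'_{\bar\calC}$ of that $\bar\bfP$-coloured circle, namely $\bar\rmt_{\bar\bfP}(\bar\bfLambda \circ \bar\bfepsilon)$.

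\textbf{Non-vanishing of the scalar.} It then remains to observe that $\bar\rmt_{\bar\bfP}(\bar\bfLambda \circ \bar\bfepsilon) \neq 0$. The morphisms $\bfepsilon \in \Hom_{\calC^H}(\bfP,\one)$ and $\bfLambda \in \Hom_{\calC^H}(\one,\bfP)$ are nonzero, being generators of the one-dimensional Hom spaces of Lemma \ref{L:cutting_bfP}, and $\Phi_{\calC}$ is faithful, so $\bar\bfepsilon$ and $\bar\bfLambda$ are nonzero; by Lemma \ref{L:dimOne} the spaces $\Hom_{\bar\calC}(\one,\bar\bfP)$ and $\Hom_{\bar\calC}(\bar\bfP,\one)$ are one-dimensional, so the non-degeneracy of $\bar\rmt$ on $\Proj(\bar\calC)$ forces $\bar\rmt_{\bar\bfP}(\bar\bfLambda \circ \bar\bfepsilon) \neq 0$ (one can also pin down this scalar as $\alpha^{-1}$ using Remark \ref{R:comparison_traces}). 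Therefore $\rmV_{\bar\calC}(g \circ (\bbD^3 \smallsetminus \bbN^3)_{\bar\bfP}) = \bar\rmt_{\bar\bfP}(\bar\bfLambda \circ \bar\bfepsilon) \cdot \id$ is invertible, and the injectivity of $\rmV_{\bar\calC}((\bbD^3 \smallsetminus \bbN^3)_{\bar\bfP})$ follows.

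\textbf{Main obstacle.} The delicate step is the isotopy in the second paragraph: one must trace the tangle of Figure \ref{F:disc_minus_solid_torus} through the gluing and verify that the doubled tangle in $T^2 \times I$ genuinely splits off the vertical $\bar\bfP$-coloured strand with \emph{trivial} framing, the remaining piece being a single $\bar\bfP$-coloured circle carrying exactly the coupon $\bar\bfLambda \circ \bar\bfepsilon$; any surviving framing twist or extra linking of the closed component with the vertical strand would spoil the ``scalar times identity'' conclusion. Checking that $g$ is a legitimate admissible morphism of $\adCob_{\bar\calC}$ with the correct Lagrangian, and that no spurious signature defect is created in the gluing, is routine but should be included.
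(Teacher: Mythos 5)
There is a genuine topological gap that sinks the argument at the very start of the second paragraph. You claim that $D^3 \cup_{S^2} D^3 = S^3$, with an unknotted solid torus removed from each ball, is diffeomorphic to $T^2 \times I$. This is false: the two removed circles lie on opposite sides of the separating $S^2$, hence are \emph{unlinked}, and the complement of the $2$-component unlink in $S^3$ has fundamental group $F_2$ (free on two meridians), whereas $\pi_1(T^2 \times I) \cong \Z^2$. It is the \emph{Hopf link} complement that is $T^2 \times I$, not the unlink complement. In fact no choice of $g$ of the form you allow can repair this: $T^2 \times I$ is irreducible, so every embedded $S^2$ inside it bounds a ball, and cutting along such a sphere never yields two pieces each with exactly two boundary components, so $T^2 \times I$ admits no decomposition as a composition $g \circ (\bbD^3 \smallsetminus \bbN^3)_{\bar\bfP}$ across an $S^2$. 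Consequently $\rmV_{\bar\calC}(g \circ (\bbD^3 \smallsetminus \bbN^3)_{\bar\bfP})$ is not a scalar multiple of the identity, and the proposed one-sided inverse does not exist.

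The paper proves the lemma by a completely different mechanism: it dualizes via the universal-construction pairing, reducing injectivity of $\rmV_{\bar\calC}((\bbD^3 \smallsetminus \bbN^3)_{\bar\bfP})$ to surjectivity of the induced map on primed state spaces $\rmV'_{\bar\calC}(\bbS^2_{((-,\bar\bfP),(-,\bar\bfP),(+,\bar\bfP))}) \rightarrow \rmV'_{\bar\calC}((\bbS^1 \times \bbS^1)_{(-,\bar\bfP)})$. It then shows surjectivity by a skein-theoretic normal form: any $\bar\calC$-colored bichrome graph in the filling of the torus can, after isotoping a projective edge through $N^3$ and using that $\bar\bfP$ is a projective generator (Proposition \ref{P:barP_proj_gen}), be put in the form of Figure \ref{F:torus_prime_space}, which manifestly factors through $(\bbD^3 \smallsetminus \bbN^3)_{\bar\bfP}$. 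This duality-plus-generating argument avoids ever having to produce a cobordism inverse, which, as the above shows, does not exist topologically.
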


\begin{proof}
 As we will show, the proof follows rather directly from the surjectivity of
 \[
  \rmV'_{\bar{\calC}} \left( (\bbD^3 \smallsetminus \bbN^3)_{\bar{\bfP}} \right) : \rmV'_{\bar{\calC}} \left( \bbS^2_{((-,\bar{\bfP}),(-,\bar{\bfP}),(+,\bar{\bfP}))} \right) \rightarrow \rmV'_{\bar{\calC}} \left( (\bbS^1 \times \bbS^1)_{(-,\bar{\bfP})} \right).
 \]
 Indeed, a vector of the form
 \[
  \sum_{i=1}^m \alpha_i \cdot \left[ (\bbD^3 \smallsetminus \bbN^3)_{\bar{\bfP}} \circ \bbM_i \right] \in \rmV_{\bar{\calC}} \left( \bbS^2_{((-,\bar{\bfP}),(-,\bar{\bfP}),(+,\bar{\bfP}))} \right)
 \]
 for some $\alpha_1,\ldots,\alpha_m \in \C$ and some $[\bbM_1],\ldots,[\bbM_m] \in \rmV_{\bar{\calC}} \smash{\left( (\bbS^1 \times \bbS^1)_{(-,\bar{\bfP})} \right)}$ is trivial if and only if
 \[
  \sum_{i=1}^m \alpha_i \langle \bbM',(\bbD^3 \smallsetminus \bbN^3)_{\bar{\bfP}} \circ \bbM_i \rangle_{\bbS^2_{((-,\bar{\bfP}),(-,\bar{\bfP}),(+,\bar{\bfP}))}} = 0
 \]
 for every $[\bbM'] \in \rmV'_{\bar{\calC}} \smash{\left( \bbS^2_{((-,\bar{\bfP}),(-,\bar{\bfP}),(+,\bar{\bfP}))} \right)}$, where for every object $\bbSigma$ of $\adCob_{\bar{\calC}}$ the linear map
 \begin{gather*}
  \langle \cdot, \cdot \rangle_{\bbSigma} : \rmV'_{\bar{\calC}}(\bbSigma) \otimes \rmV_{\bar{\calC}}(\bbSigma) \rightarrow \C
 \end{gather*}
 denotes the non-degenerate pairing induced by the universal construction in Section 3.3 of \cite{DGP18}. Then, since
 \[
 \langle \bbM',(\bbD^3 \smallsetminus \bbN^3)_{\bar{\bfP}} \circ \bbM \rangle_{\bbS^2_{((-,\bar{\bfP}),(-,\bar{\bfP}),(+,\bar{\bfP}))}} = \langle \bbM' \circ (\bbD^3 \smallsetminus \bbN^3)_{\bar{\bfP}},\bbM \rangle_{(\bbS^1 \times \bbS^1)_{(-,\bar{\bfP})}}
 \]
 for every $[\bbM] \in \rmV_{\bar{\calC}} \smash{\left( (\bbS^1 \times \bbS^1)_{(-,\bar{\bfP})} \right)}$ and every $[\bbM'] \in \rmV'_{\bar{\calC}} \smash{\left( \bbS^2_{((-,\bar{\bfP}),(-,\bar{\bfP}),(+,\bar{\bfP}))} \right)}$, the injectivity of 
 \[
  \rmV_{\bar{\calC}} \left( (\bbD^3 \smallsetminus \bbN^3)_{\bar{\bfP}} \right) : \rmV_{\bar{\calC}} \left( (\bbS^1 \times \bbS^1)_{(-,\bar{\bfP})} \right) \rightarrow \rmV_{\bar{\calC}} \left( \bbS^2_{((-,\bar{\bfP}),(-,\bar{\bfP}),(+,\bar{\bfP}))} \right)
 \]
 is equivalent to the surjectivity of 
 \[
  \rmV'_{\bar{\calC}} \left( (\bbD^3 \smallsetminus \bbN^3)_{\bar{\bfP}} \right) : \rmV'_{\bar{\calC}} \left( \bbS^2_{((-,\bar{\bfP}),(-,\bar{\bfP}),(+,\bar{\bfP}))} \right) \rightarrow \rmV'_{\bar{\calC}} \left( (\bbS^1 \times \bbS^1)_{(-,\bar{\bfP})} \right).
 \]
 
 \begin{figure}[t]
  \centering
  \includegraphics{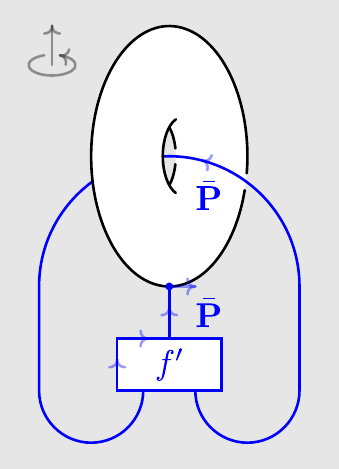}
  \caption{The $\bar{\calC}$-colored ribbon graph $T'_{f'} \subset (D^3 \smallsetminus N^3) \cup_{S^2} \bar{D^3}$.}
  \label{F:torus_prime_space}
 \end{figure}
 
 In order to prove that $\rmV'_{\bar{\calC}}((\bbD^3 \smallsetminus \bbN^3)_{\bar{\bfP}})$ is surjective we remark that, as soon as an object $\bbSigma = (\varSigma,P,\calL)$ of $\adCob_{\bar{\calC}}$ features a projective blue point of $P$ in every connected component of $\varSigma$, the proof of Proposition 3.13 of \cite{DGP18} can be repeated to show that the linear map 
 \[
  \begin{array}{rccc}
   \pi'_{\bbSigma} : & \calV'(M';\bbSigma) & \rightarrow & \rmV'_{\bar{\calC}}(\bbSigma) \\
   & T' & \mapsto & [M',T',0]
  \end{array}
 \]
 is surjective for every connected 3-dimensional cobordism $M'$ from $\Sigma$ to $\varnothing$. This means that every vector in $\rmV'_{\bar{\calC}}((\bbS^1 \times \bbS^1)_{(-,\bar{\bfP})})$ can be described by a linear combination of $\bar{\calC}$-colored bichrome graphs inside $(D^3 \smallsetminus N^3) \cup_{S^2} \bar{D^3}$ from $P_{(-,\bar{\bfP})}$ to $\varnothing$. Let $T'$ be such a $\bar{\calC}$-colored bichrome graph. Up to isotoping its $\bar{\bfP}$-colored edge intersecting $P_{(-,\bar{\bfP})}$, we can make sure there is a projective edge of $T'$ piercing the solid torus $N^3$. But now, thanks to Proposition \ref{P:barP_proj_gen}, $\bar{\bfP}$ is a projective generator of $\bar{\calC}$. This means that, up to skein equivalence, we can insert a pair of coupons joined by a $\bar{\bfP}$-colored edge. As a direct consequence, $T'$ is skein equivalent to a $\bar{\calC}$-colored ribbon graph like the one represented in Figure \ref{F:torus_prime_space} for some $f' \in \Hom_{\bar{\calC}}(\bar{\bfP} \otimes \bar{\bfP}^*,\bar{\bfP})$. Clearly every vector of this form lies in the image of $\rmV'_{\bar{\calC}}((\bbD^3 \smallsetminus \bbN^3)_{\bar{\bfP}})$. \qedhere
\end{proof}

Let us consider now the morphisms $(\bbS^1 \times \bbD^2)_{h_{\Omega}} : \varnothing \rightarrow (\bbS^1 \times \bbS^1)_{(-,\bar{\bfP})}$ and $(\bbS^1 \times \bbD^2)_{h_{\lambda}} : \varnothing \rightarrow (\bbS^1 \times \bbS^1)_{(-,\bar{\bfP})}$ of $\adCob_{\bar{\calC}}$ defined by
\begin{gather*}
 (\bbS^1 \times \bbD^2)_{h_{\Omega}} := (S^1 \times D^2,T_{h_{\Omega}},0), \\
 (\bbS^1 \times \bbD^2)_{h_{\lambda}} := (S^1 \times D^2,T_{h_{\lambda}},0)
\end{gather*}
where the $\bar{\calC}$-colored ribbon graphs $T_{h_{\Omega}}$ and $T_{h_{\lambda}}$ are represented in the left hand part and in the right hand part of Figure \ref{F:Omega_lambda_solid_torus} respectively.

\begin{figure}[ht]
 \centering
 \includegraphics{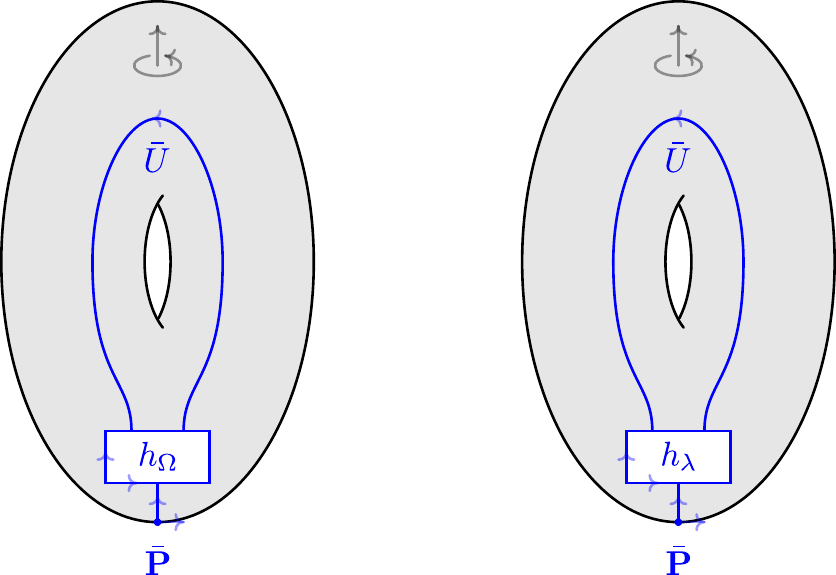}
 \caption{The $\bar{\calC}$-colored ribbon graphs $T_{h_{\Omega}},T_{h_{\lambda}} \subset S^1 \times D^2$.}
 \label{F:Omega_lambda_solid_torus}
\end{figure}

\begin{lemma}\label{L:cutting_inside_ball}
 The morphisms $(\bbS^1 \times \bbD^2)_{h_{\Omega}}$ and $(\bbS^1 \times \bbD^2)_{h_{\lambda}}$ of $\adCob_{\bar{\calC}}$ satisfy
 \[
  [(\bbS^1 \times \bbD^2)_{h_{\Omega}}] = \alpha \cdot [(\bbS^1 \times \bbD^2)_{h_{\lambda}}] \in \rmV_{\bar{\calC}} \smash{\left( (\bbS^1 \times \bbS^1)_{(-,\bar{\bfP})} \right)},
 \]
 where $\alpha$ is the coefficient introduced in Remark \ref{R:comparison_traces}. Furthermore, there exist compatible choices for the coefficients $\calD_{\Omega}$ and $\calD_{\lambda}$ yielding
 \[
  \calD_{\Omega} = \alpha \calD_{\lambda}, \quad \delta_{\Omega} = \delta_{\lambda}.
 \]
\end{lemma}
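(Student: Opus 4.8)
The plan is to reduce the asserted identity in $\rmV_{\bar\calC}\bigl((\bbS^1 \times \bbS^1)_{(-,\bar\bfP)}\bigr)$ to an identity of morphisms of $\bar\calC$ via the injectivity of Lemma~\ref{L:injective_map_from_torus_to_disc}, and then to extract the relations between the stabilization coefficients by closing everything up. First I would apply $\rmV_{\bar\calC}\bigl((\bbD^3 \smallsetminus \bbN^3)_{\bar\bfP}\bigr)$ to both vectors; by Lemma~\ref{L:injective_map_from_torus_to_disc} this map is injective, so the claim becomes
\[
 \bigl[(\bbD^3 \smallsetminus \bbN^3)_{\bar\bfP} \circ (\bbS^1 \times \bbD^2)_{h_\Omega}\bigr] = \alpha \cdot \bigl[(\bbD^3 \smallsetminus \bbN^3)_{\bar\bfP} \circ (\bbS^1 \times \bbD^2)_{h_\lambda}\bigr]
\]
in $\rmV_{\bar\calC}$ of the $2$-sphere object $\bbS^2_{((-,\bar\bfP),(-,\bar\bfP),(+,\bar\bfP))}$. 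Gluing $S^1 \times D^2$ back onto $D^3 \smallsetminus N^3$ fills in the removed solid torus, so each side is represented by a $\bar\calC$-colored ribbon graph in the ball with boundary the three marked points, and under this gluing the $\bar U$-colored surgery loop of $T_{h_\bullet}$ together with the $h_\bullet$-coupon (Figure~\ref{F:Omega_lambda_solid_torus}) becomes a meridian-type configuration on a $\bar\bfP$-colored edge of $T_{\bar\bfP}$ (Figure~\ref{F:disc_minus_solid_torus}). Since $\bar\bfP$ is projective, the Hennings counterpart of Remark~7.2 of \cite{D17}, obtained from Corollary~3.21 of \cite{DGP18}, identifies the state vector of such a graph with its image under $F_{\bar\calC}$ in a suitable $\Hom$-space of $\bar\calC$, so it is enough to compare these two morphisms.

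Next I would simplify both graphs using the cutting property of $\bar\bfP$-colored meridians. On the $h_\Omega$ side the loop-plus-coupon is by construction the image under $\Phi_{\calR}$ of the $\Omega$-stabilization gadget of Lemma~\ref{L:cutting_bfP}, so Figures~\ref{F:cutting_bfP} and~\ref{F:cutting_black}, transported along $\Phi_{\calR}$ by Lemma~\ref{L:forgetful_functors}, delete the loop and the coupon and replace the $\bar\bfP$-edge by $\bar\bfLambda \circ \bar\bfepsilon$, up to a scalar built from the modified dimensions $\rmd^H(V_\mu)$ entering the definition of $h_\Omega$. On the $h_\lambda$ side the same loop is the $\bar U$-colored red meridian of the renormalized Hennings theory, and the analogue of Lemma~3.6 of \cite{DGP18} deletes it at the cost of the corresponding $\lambda$-normalized scalar. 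Because the two cuttings are pinned down by $\rmt^H_{\bfP}(\bfLambda \circ \bfepsilon) = 1$ and $\bar\rmt_{\bar U}(\Lambda \circ \varepsilon) = 1$ respectively, while the state space $\rmV_{\bar\calC}$ is built from $\bar\rmt$, the two resulting morphisms of $\bar\calC$ coincide up to precisely the factor $\alpha$ of Remark~\ref{R:comparison_traces}, which establishes the first assertion.

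For the stabilization coefficients I would glue $(\bbS^1 \times \bbD^2)_{h_\bullet}$ to the complementary solid torus carrying a $\mp 1$-framed $\bar\bfP$-colored meridian of its core together with the cutting morphisms, obtaining decorated copies of $S^3$; realizing the framing is a Dehn twist, which is a linear automorphism of the torus state space and hence preserves the factor $\alpha$. Evaluating $F'_{\bar\calC}$ on the two closings, and using on the $\Omega$ side the definition of $\Delta_{\mp\Omega}$ from Figure~\ref{F:stabilization_coefficients_Omega} transported through $\Phi_{\calR}$, and on the $\lambda$ side the identities $\Delta_{-\lambda} = \lambda(v)$, $\Delta_{+\lambda} = \lambda(v^{-1})$ with Theorem~2.9 of \cite{DGP18}, the first assertion forces $\Delta_{\pm\Omega} = \alpha \cdot \Delta_{\pm\lambda}$. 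In particular $\Delta_{-\Omega}\Delta_{+\Omega} = \alpha^2 \Delta_{-\lambda}\Delta_{+\lambda}$, so fixing a square root $\calD_\lambda$ of $\Delta_{-\lambda}\Delta_{+\lambda}$ and setting $\calD_\Omega := \alpha \calD_\lambda$ gives a compatible pair, and then $\delta_\Omega = \calD_\Omega / \Delta_{-\Omega} = \calD_\lambda / \Delta_{-\lambda} = \delta_\lambda$.

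I expect the main obstacle to be the bookkeeping in the second paragraph: one must check that the CGP cutting of Lemma~\ref{L:cutting_bfP} applied to $h_\Omega$ and the Hennings cutting of Lemma~3.6 of \cite{DGP18} applied to $h_\lambda$ produce, inside the ball, graphs whose $F_{\bar\calC}$-images differ by exactly one factor of $\alpha$, with no stray powers of $\alpha$, $r^N$ or $\lvert \calH_r \rvert$ — these auxiliary constants being precisely what is absorbed into the normalization mismatch $\alpha$ of Remark~\ref{R:comparison_traces}. Verifying the Hennings form of Remark~7.2 of \cite{D17} used above, and that the gluing in the third paragraph genuinely yields $S^3$ and realizes the defining skein relations of the stabilization coefficients, are the remaining technical points.
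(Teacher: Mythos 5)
Your overall strategy — reduce via the injectivity of Lemma~\ref{L:injective_map_from_torus_to_disc} to an identity inside a ball, apply the cutting lemmas on both sides, and then close up to compare stabilization coefficients — matches the outline of the paper's argument. But the second paragraph contains a genuine gap, not merely a bookkeeping obstacle. After the two cuttings you have
\[
\bigl[(\bbD^3 \smallsetminus \bbN^3)_{\bar\bfP} \circ (\bbS^1 \times \bbD^2)_{h_\Omega}\bigr] = r^{2N}\lvert \calH_r\rvert \cdot \bigl[(D^3,T_{D^3},0)\bigr], \qquad
\bigl[(\bbD^3 \smallsetminus \bbN^3)_{\bar\bfP} \circ (\bbS^1 \times \bbD^2)_{h_\lambda}\bigr] = \beta \cdot \bigl[(D^3,T_{D^3},0)\bigr],
\]
where the first coefficient is explicit from Lemma~\ref{L:cutting_bfP}, while the second, coming from Lemma~3.6 of \cite{DGP18} applied to a $\bar\bfP$-colored edge, is only an existence statement for some $\beta \in \C^*$ (guaranteed by the one-dimensionality of the relevant $\Hom$-spaces, Lemma~\ref{L:dimOne}). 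Your claim that the two therefore differ by exactly $\alpha$ because of the trace normalizations is circular: the condition $\bar\rmt_{\bar U}(\Lambda\circ\varepsilon)=1$ calibrates the red-meridian cut on a $\bar U$-colored strand, not on a $\bar\bfP$-colored one, and the coefficient $r^{2N}\lvert\calH_r\rvert$ on the $\Omega$ side comes from the modified dimensions of the Kirby colors, not from the m-trace normalization. Nothing in Remark~\ref{R:comparison_traces} alone forces $\beta = r^{2N}\lvert\calH_r\rvert/\alpha$.

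The missing ingredient is an independent determination of $\beta$. The paper obtains it by evaluating the renormalized Hennings invariant of the closed morphism $\bbS^2_{(+,\bar\bfP)} \times \bbS^1$ in two ways: on one hand $\rmH'_{\bar\calC}(\bbS^2_{(+,\bar\bfP)} \times \bbS^1) = \dim_{\C}\Hom_{\bar\calC}(\one,\bar\bfP) = 1$ by Lemma~\ref{L:dimOne}; on the other, a genus-one surgery presentation with the $0$-framed red/blue Hopf link $H_{\bar\bfP}$ and the definitions of $\beta$ and of the chosen $\calD_\lambda$ give $\rmH'_{\bar\calC}(\bbS^2_{(+,\bar\bfP)} \times \bbS^1) = r^{2N}\lvert\calH_r\rvert/(\alpha\beta)$. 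Equating these yields $\beta = r^{2N}\lvert\calH_r\rvert/\alpha$, and only then do the first assertion and the relations $\Delta_{\pm\Omega} = \alpha\Delta_{\pm\lambda}$, $\calD_\Omega = \alpha\calD_\lambda$, $\delta_\Omega = \delta_\lambda$ follow. Your paragraph~3 derives the stabilization-coefficient relations from the first assertion, so it inherits the gap; the paper in fact goes the other way, obtaining linear dependence with an undetermined ratio first, then the relation $\beta\Delta_{\pm\Omega} = r^{2N}\lvert\calH_r\rvert\Delta_{\pm\lambda}$, and finally pinning down $\beta$. You need to supply this $\bbS^2 \times \bbS^1$ computation (or an equivalent one) to make the argument complete.
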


\begin{proof}
 We start by proving $[(\bbS^1 \times \bbD^2)_{h_{\Omega}}]$ and $[(\bbS^1 \times \bbD^2)_{h_{\lambda}}]$ are linearly dependent in $\rmV_{\bar{\calC}} \smash{\left( (\bbS^1 \times \bbS^1)_{(-,\bar{\bfP})} \right)}$. This is done by using Lemma \ref{L:injective_map_from_torus_to_disc}. Indeed, on one hand, Lemma \ref{L:cutting_bfP} gives the equality
 \[
  [(\bbD^3 \smallsetminus \bbN^3)_{\bar{\bfP}}) \circ (\bbS^1 \times \bbD^2)_{h_{\Omega}}] = r^{2N} \left| \calH_r \right| \cdot [(D^3,T_{D^3},0)] \in \rmV_{\bar{\calC}} \left( \bbS^2_{((-,\bar{\bfP}),(-,\bar{\bfP}),(+,\bar{\bfP}))} \right),
 \]
 where the $\bar{\calC}$-colored ribbon graph $T_{D^3}$ is represented in Figure \ref{F:cutting_inside_ball}.
 \begin{figure}[t]
  \centering
  \includegraphics{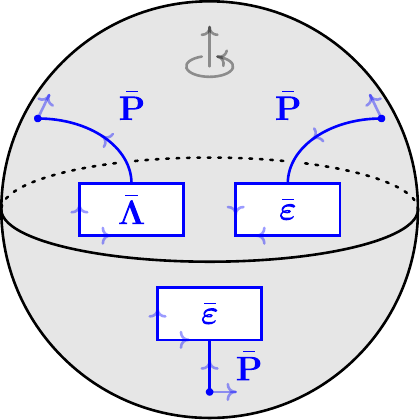}
  \caption{The $\bar{\calC}$-colored ribbon graph $T_{D^3} \subset D^3$.}
  \label{F:cutting_inside_ball}
 \end{figure}
 On the other hand, $\bar{U}$ is a projective generator of $\bar{\calC}$, which means
 \[
  \id_{\bar{\bfP}} = \sum_{i=1}^m g_{\bar{\bfP},i} \circ f_{\bar{\bfP},i}
 \]
 for some morphisms $f_{\bar{\bfP},i} \in \Hom_{\bar{\calC}}(\bar{\bfP},\bar{U})$ and $g_{\bar{\bfP},i} \in \Hom_{\bar{\calC}}(\bar{U},\bar{\bfP})$. Then, thanks to Lemma 3.6 of \cite{DGP18} combined with Lemma \ref{L:dimOne}, we know there exists a non-zero coefficient $\beta \in \C^*$ giving the skein equivalence of Figure \ref{F:cutting_red}.
 \begin{figure}[b]
  \centering
  \includegraphics{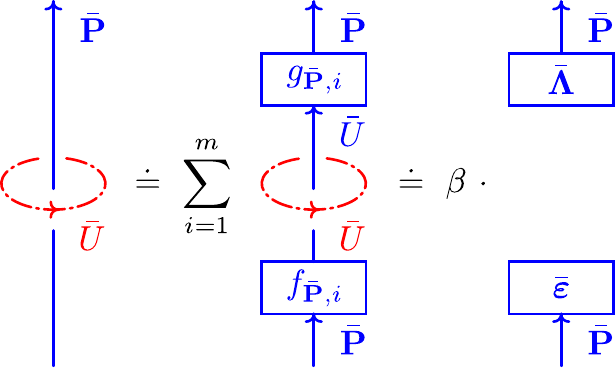}
  \caption{Cutting a $\bar{\bfP}$-colored edge with a red meridian.}
  \label{F:cutting_red}
 \end{figure}
 This gives
 \[
  [(\bbD^3 \smallsetminus \bbN^3)_{\bar{\bfP}}) \circ (\bbS^1 \times \bbD^2)_{h_{\lambda}}] = \beta \cdot [(D^3,T_{D^3},0)] \in \rmV_{\bar{\calC}} \left( \bbS^2_{((-,\bar{\bfP}),(-,\bar{\bfP}),(+,\bar{\bfP}))} \right).
 \]
 Therefore, we get
 \[
  \beta \cdot [(\bbS^1 \times \bbD^2)_{h_{\Omega}}] = r^{2N} \left| \calH_r \right| \cdot [(\bbS^1 \times \bbD^2)_{h_{\lambda}}].
 \] 
 
 Next, this relation allows us to compare the stabilization coefficients. Indeed, if $T_{\pm \Omega}$ and $T_{\pm \lambda}$ denote the $\bar{\calC}$-colored ribbon graphs represented in Figure \ref{F:comparison_stabilization_coefficients},
 \begin{figure}[t]
  \centering
  \includegraphics{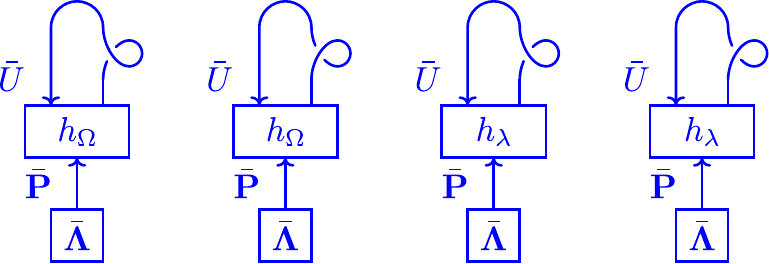}
  \caption{The $\bar{\calC}$-colored ribbon graphs $T_{-\Omega},T_{+\Omega},T_{-\lambda},T_{+\lambda} \subset S^3$.}
  \label{F:comparison_stabilization_coefficients}
 \end{figure}
 then, by definition of $h_\Omega$, we have
 \[
  F'_{\bar{\calC}}(T_{\pm \Omega}) = \Delta_{\pm \Omega} \bar{\rmt}_{\bar{\bfP}}(\bar{\bfLambda} \circ \bar{\bfepsilon}) = \alpha \Delta_{\pm \Omega} \rmt^H_{\bfP}(\bfLambda \circ \bfepsilon) = \alpha \Delta_{\pm \Omega},
 \]
 and analogously, by definition of $h_\lambda$, we have
 \[
  F'_{\bar{\calC}}(T_{\pm \lambda}) = \Delta_{\pm \lambda} \bar{\rmt}_{\bar{\bfP}}(\bar{\bfLambda} \circ \bar{\bfepsilon}) = \alpha \Delta_{\pm \lambda} \rmt^H_{\bfP}(\bfLambda \circ \bfepsilon) = \alpha \Delta_{\pm \lambda}.
 \]
 This means
 \[
  \rmH'_{\bar{\calC}}(S^3,T_{\pm \Omega},0) = \alpha \calD_{\lambda}^{-1} \Delta_{\pm \Omega}, \quad \rmH'_{\bar{\calC}}(S^3,T_{\pm \lambda},0) = \alpha \calD_{\lambda}^{-1} \Delta_{\pm \lambda}.
 \]
 But now, thanks to the previous equality, we have 
 \[
  \beta \rmH'_{\bar{\calC}}(S^3,T_{\pm \Omega},0) = r^{2N} \left| \calH_r \right| \rmH'_{\bar{\calC}}(S^3,T_{\pm \lambda},0).
 \]
 This gives
 \[
  \beta \Delta_{\pm \Omega} = r^{2N} \left| \calH_r \right| \Delta_{\pm \lambda}.
 \]
 In particular, combining this equality with the explicit value of $\Delta_{-\Omega} \Delta_{+\Omega}$ given by Figure \ref{F:cutting_ev-coev}, we can choose
 \[
  \calD_{\Omega} = r^N \sqrt{\left| \calH_r \right|}, \quad
  \calD_{\lambda} = \frac{\beta}{r^N \sqrt{\left| \calH_r \right|}}.
 \]
 This immediately implies 
 \[
  \delta_{\Omega} = \frac{\calD_{\Omega}}{\Delta_{- \Omega}} = \frac{\calD_{\lambda}}{\Delta_{- \lambda}} = \delta_{\lambda}.
 \]

 Furthermore, we can now compute the equality
 \[  
  \beta = \frac{r^{2N} \left| \calH_r \right|}{\alpha}.
 \]
 Indeed, let us consider the object $\bbS^2_{(+,\bar{\bfP})}$ of $\adCob_{\bar{\calC}}$ defined by $(S^2,P_{(+,\bar{\bfP})},\{ 0 \})$, where the blue $\bar{\calC}$-colored ribbon set $P_{(+,\bfP)}$ is given by a single framed point with positive orientation and color $\bar{\bfP}$, and let us consider the closed morphism $\bbS^2_{(+,\bar{\bfP})} \times \bbS^1$ of $\adCob_{\bar{\calC}}$ defined by $(S^2 \times S^1,P_{(+,\bar{\bfP})},0)$. The strategy will be to compute the renormalized Hennings invariant of $\bbS^2_{(+,\bar{\bfP})} \times \bbS^1$ in two different ways. On one hand, the isomorphism $\rmV_{\bar{\calC}} \smash{\left( \bbS^2_{(+,\bar{\bfP})} \right)} \cong \Hom_{\bar{\calC}}(\one,\bar{\bfP})$ gives
 \[
  \rmH'_{\bar{\calC}} \left( \bbS^2_{(+,\bar{\bfP})} \times \bbS^1 \right) = \dim_{\C} \left( \rmV_{\bar{\calC}} \left( \bbS^2_{(+,\bar{\bfP})} \right) \right) = \dim_{\C} \left( \Hom_{\bar{\calC}}(\one,\bar{\bfP}) \right) = 1,
 \]
 where the last equality follows from Lemma \ref{L:dimOne}. On the other hand, we can choose a surgery presentation of $S^2 \times S^1$ composed of a single unknot of framing 0. This choice determines the $\bar{\calC}$-colored bichrome graph $H_{\bar{\bfP}}$ given by a positive Hopf link of framing 0, with one red component colored with $\bar{U}$ and one blue component colored with $\bar{\bfP}$. Therefore, we get
 \begin{align*}
  \rmH'_{\bar{\calC}} \left( \bbS^2_{(+,\bar{\bfP})} \times \bbS^1 \right) 
  &= \calD_{\lambda}^{-2} F'_{\bar{\calC}}(H_{\bar{\bfP}}) \\
  &= \frac{r^{2N} \left| \calH_r \right| \bar{\rmt}_{\bar{\bfP}}(\bar{\bfLambda} \circ \bar{\bfepsilon})}{\beta} \\
  &= \frac{r^{2N} \left| \calH_r \right| \rmt^H_{\bfP}(\bfLambda \circ \bfepsilon)}{\alpha \beta} \\
  &= \frac{r^{2N} \left| \calH_r \right|}{\alpha \beta}. \qedhere
 \end{align*}
\end{proof}

\subsection{Proof of Theorem \ref{T:main_result}}

We are now ready to prove Theorem \ref{T:main_result}. As explained in the proof of Lemma \ref{L:cutting_inside_ball}, using the explicit value of $\Delta_{-\Omega} \Delta_{+\Omega}$ given by Figure \ref{F:cutting_ev-coev}, we can choose the square roots $\calD_{\Omega}$ and $\calD_{\lambda}$ to be of the form
\[
 \calD_{\Omega} = r^N \sqrt{\left| \calH_r \right|}, \quad
 \calD_{\lambda} = \frac{r^N \sqrt{\left| \calH_r \right|}}{\alpha},
\] 
Also, let us set
\[
 \delta := \delta_{\Omega} = \delta_{\lambda}.
\]

\begin{proof}[Proof of Theorem \ref{T:main_result}]
 If $M$ is a closed 3-manifold, and $T \subset M$ is an admissible $\smash{\calC^H_{[0]}}$-colored ribbon graph, then let $e \subset T$ be a projective edge of color $V$, let $L = L_1 \cup \ldots \cup L_{\ell} \subset S^3$ be a surgery presentation of $M$, let $\gamma_j \subset S^3 \smallsetminus (L \cup T)$ be disjoint paths connecting $e$ to $L_j$ for every integer $1 \leq j \leq \ell$, and let $(L \cup \bar{T})_{h_{\Omega}}$ and $(L \cup \bar{T})_{h_{\lambda}}$ be $\bar{\calC}$-colored ribbon graphs obtained by $\Omega$-stabilization and by $\lambda$-stabilization along $\gamma_1, \ldots, \gamma_{\ell}$, as explained in Subsection \ref{Subs:stabilized_surgery_presentations}. Then we have
 \begin{align*}
  \rmN_{\calC^H}(M,T,0,0) &= \alpha \calD_{\Omega}^{-1-\ell} \delta^{-\sigma(L)} F'_{\bar{\calC}}((L \cup \bar{T})_{h_{\Omega}}) \\
  &= \alpha^{-\ell} \calD_{\lambda}^{-1-\ell} \delta^{-\sigma(L)} F'_{\bar{\calC}}((L \cup \bar{T})_{h_{\Omega}}) \\
  &= \alpha^{-\ell} \calD_{\lambda}^{-\ell} \delta^{-\sigma(L)} \rmH'_{\bar{\calC}}(S^3,(L \cup \bar{T})_{h_{\Omega}},0) \\
  &= \calD_{\lambda}^{-\ell} \delta^{-\sigma(L)} \rmH'_{\bar{\calC}}(S^3,(L \cup \bar{T})_{h_{\lambda}},0) \\
  &= \calD_{\lambda}^{-1-\ell} \delta^{-\sigma(L)} F'_{\bar{\calC}}((L \cup \bar{T})_{h_{\lambda}}) \\
  &= \rmH'_{\bar{\calC}}(M,\bar{T},0),
 \end{align*}
 where the second and the fourth equalities follow from Lemma \ref{L:cutting_inside_ball}.
\end{proof}

\appendix

\section{Quantum groups}\label{A:quantum_groups}

In this appendix we collect some standard definitions related to quantum groups, see \cite{CP95,J96,KS97,M11} for more details. Let $\frakg$ be a simple complex Lie algebra of rank $n$ and dimension $2N + n$, let $B$ be its Killing form, let $\frakh$ be a Cartan subalgebra of $\frakg$, let $\Phi$ be the corresponding root system, let $\Phi_+$ be a choice of a set of positive roots of $\frakg$, and let $\{ \alpha_1, \ldots, \alpha_n \}$ be an ordering of its set of simple roots. Let $A = ( a_{ij} )_{1 \leqslant i,j \leqslant n}$ be the corresponding Cartan matrix, which is the integral matrix given by
\[
 a_{ij} := \frac{2 B^*(\alpha_i,\alpha_j)}{B^*(\alpha_i,\alpha_i)},
\]
where $B^*$ is the symmetric bilinear form on $\frakh^*$ determined by the restriction of $B$ to $\frakh$ under the isomorphism which identifies a vector $H \in \frakh$ with the linear form $B(H,\cdot) \in \frakh^*$, and let $\{ H_1,\ldots, H_n \}$ be the basis of $\frakh$ determined by $\alpha_j (H_i) = a_{ij}$ for all integers $1 \leqslant i,j \leqslant n$. For every $\alpha \in \Phi_+$ we set
\[
 d_{\alpha} := \frac{B^*(\alpha,\alpha)}{\min \{ B^*(\alpha_i, \alpha_i) \mid 1 \leqslant i \leqslant n \}}
\]
and for every integer $1 \leqslant i \leqslant n$ we use the short notation $d_i := d_{\alpha_i}$. We denote with $\langle \cdot,\cdot \rangle$ the symmetric bilinear form on $\frakh^*$ determined by $\langle \alpha_i,\alpha_j \rangle = d_i a_{ij}$ for all integers $1 \leqslant i,j \leqslant n$, and we denote with $\omega_1,\ldots,\omega_n$ the corresponding fundamental dominant weights, which are the vectors of $\frakh^*$ determined by the condition $\langle \omega_i,\alpha_j \rangle = d_i \delta_{ij}$ for every $i,j = 1,\ldots,n$. We denote with $\Lambda_R$ the root lattice, which is the subgroup of $\frakh^*$ generated by simple roots, and we denote with $\Lambda_W$ the weight lattice, which is the subgroup of $\frakh^*$ generated by fundamental dominant weights. If $q$ is a formal parameter, then for every $\alpha \in \Phi_+$ we set $q_{\alpha} := q^{d_{\alpha}}$, for all $k \geqslant \ell \in \N$ we define
\begin{gather*}
 \{ k \}_{\alpha} := q_{\alpha}^k - q_{\alpha}^{-k}, \quad [k]_{\alpha} := \frac{\{ k \}_{\alpha}}{\{ 1 \}_{\alpha}},
 \quad [k]_{\alpha}! := [k]_{\alpha}[k-1]_{\alpha}\cdots[1]_{\alpha}, \\
 \sqbinom{k}{\ell}_{\alpha} := \frac{[k]_{\alpha}!}{[\ell]_{\alpha}![k-\ell]_{\alpha}!},
\end{gather*}
and for every integer $1 \leqslant i \leqslant n$ we use the short notation 
\[
 q_i := q_{\alpha_i}, \quad \{ k \}_i := \{ k \}_{\alpha_i}, \quad [k]_i := [k]_{\alpha_i},
 \quad [k]_i! := [k]_{\alpha_i}!, \quad
 \sqbinom{k}{\ell}_i := \sqbinom{k}{\ell}_{\alpha_i}.
\]
Let $\calU_q \frakg$ denote the \textit{quantum group of $\frakg$}, which is the $\C(q)$-algebra with generators
\[
 \{ K_i,K_i^{-1},E_i,F_i \mid 1 \leqslant i \leqslant n \}
\]
and relations
\begin{gather*}
 K_i K_i^{-1} = K_i^{-1} K_i = 1, \quad [K_i,K_j] = 0, \\
 K_i E_{j} K_i^{-1} = q_i^{a_{ij}} \cdot E_{j}, \quad
 K_i F_{j} K_i^{-1} = q_i^{- a_{ij}} \cdot F_{j}, \\
 [E_i,F_j] = \delta_{ij} \cdot \frac{K_i - K_i^{-1}}{q_i-q_i^{-1}}
\end{gather*}
for all integers $1 \leqslant i,j \leqslant n$ and
\begin{gather*}
 \displaystyle \sum_{k=0}^{1 - a_{ij}} (-1)^{k} \sqbinom{1-a_{ij}}{k}_i \cdot E_{i}^{k} E_{j} E_{i}^{1-a_{ij}-k} = 0, \\
 \displaystyle \sum_{k=0}^{1 - a_{ij}} (-1)^{k} \sqbinom{1-a_{ij}}{k}_i \cdot F_{i}^{k} F_{j} F_{i}^{1-a_{ij}-k} = 0\phantom{,}
\end{gather*}
for all integers $1 \leqslant i,j \leqslant n$ with $i \neq j$. Then $\calU_q \frakg$ can be made into a Hopf algebra by setting
\begin{align*}
 \Delta(K_i) &= K_i \otimes K_i, & \varepsilon(K_i) &= 1, & S(K_i) &= K_i^{- 1}, \\
 \Delta(E_i) &= E_i \otimes K_i + 1 \otimes E_i, & \varepsilon(E_i) &= 0, & S(E_i) &= -E_i K_i^{-1}, \\
 \Delta(F_i) &= F_i \otimes 1 + K_i^{-1} \otimes F_i, & \varepsilon(F_i) &= 0, & S(F_i) &= - K_i F_i
\end{align*}
for all integers $1 \leqslant i \leqslant n$. For every
\[
 \mu = \sum_{i=1}^n m_i \cdot \alpha_i \in \Lambda_R
\]
we use the notation
\[
 K_{\mu} := \prod_{i=1}^n K_i^{m_i},
\]
and for every $\alpha \in \Phi_+$ we define root vectors $E_{\alpha}$ and $F_{\alpha}$ as follows: first, we consider the Weyl group $W$ of $\frakg$ associated with $\frakh$, which is the subgroup of $\GL(\frakh^*)$ generated by reflections
\[
 \begin{array}{rccc}
  s_i : & \frakh^* & \rightarrow & \frakh^* \\
  & \alpha_j & \mapsto & \alpha_j - a_{ij} \cdot \alpha_i
 \end{array}
\]
for every integer $1 \leqslant i \leqslant n$. Next, we consider the unique element $w_0 \in W$ corresponding to a word of maximal length in the generators. The choice of a decomposition $w_0 = s_{i_1} \circ \cdots \circ s_{i_N}$ determines a total order on the set of positive roots
\[
 \Phi_+ = \{ \alpha_{i_1},s_{i_1}(\alpha_{i_2}),\ldots,(s_{i_1} \circ \cdots \circ s_{i_{N-1}})(\alpha_{i_N}) \}. 
\]
Then, for every integer $1 \leqslant i \leqslant n$, we consider the automorphism $T_i$ of $\calU_q \frakg$ determined by 
\begin{align*}
 T_i(K_j) &:= K_j K_i^{-a_{ij}}, \\
 T_i(E_j) &:= 
 \begin{cases}
  - F_iK_i \phantom{- K_i^{-1}E_i} \hspace{150pt} & i = j, \\
  \displaystyle \sum_{k=0}^{-a_{ij}} (-1)^{k} \frac{q_i^{a_{ij} + k}}{[k]_i![-a_{ij}-k]_i!} \cdot E_i^{k} E_j E_i^{-a_{ij}-k} & i \neq j,
 \end{cases} \\
 T_i(F_j) &:= 
 \begin{cases}
  - K_i^{-1}E_i \phantom{- F_iK_i} \hspace{150pt} & i = j, \\
  \displaystyle \sum_{k=0}^{-a_{ij}} (-1)^{-a_{ij}-k} \frac{q_i^{k}}{[k]_i![-a_{ij}-k]_i!} \cdot F_i^{k} F_j F_i^{-a_{ij}-k} & i \neq j.
 \end{cases}
\end{align*}
Now, for every integer $1 \leqslant k \leqslant N$, we set 
\[
 \beta_k := (s_{i_1} \circ \cdots \circ s_{i_{k-1}})(\alpha_{i_k}) \in \Phi_+
\]
and
\[
 E_{\beta_k} := (T_{i_1} \circ \cdots \circ T_{i_{k-1}})(E_{i_k}) \in \calU_q \frakg, \quad 
 F_{\beta_k} := (T_{i_1} \circ \cdots \circ T_{i_{k-1}})(F_{i_k}) \in \calU_q \frakg.
\]

 

\end{document}